\newtheorem{Theorem}{Theorem}[section]
\newtheorem{Lemma}[Theorem]{Lemma}
\newtheorem{Corollary}[Theorem]{Corollary}
\newtheorem{corollary}[Theorem]{Corollary}
\newtheorem{proposition}[Theorem]{Proposition}
\newtheorem{theorem}[Theorem]{Theorem}
\newtheorem{lemma}[Theorem]{Lemma}
\theoremstyle{definition}
\newtheorem{Definition}[Theorem]{Definition}
\newtheorem{Remark}[Theorem]{Remark}
\newtheorem{remark}[Theorem]{Remark}
\newtheorem{example}[Theorem]{Example}
\DeclareMathOperator{\supp}{supp}
\DeclareMathOperator{\re}{Re}
\DeclareMathOperator{\diam}{diam}
\DeclareMathOperator{\co}{co}
\DeclareMathOperator{\Id}{Id}
\newcommand{\bbn}{\mathbb{N}}
\newcommand{\bbk}{\mathbb{K}}
\newcommand{\bbr}{\mathbb{R}}
\newcommand{\bbc}{\mathbb{C}}
\newcommand{\call}{\mathcal{L}}
\newcommand{\cala}{\mathcal{A}}
\renewcommand{\re}{\operatorname{Re}}
\newcommand{\conv}{\operatorname{conv}}
\renewcommand{\epsilon}{\varepsilon}
\newcommand{\ste}{\operatorname{St}_\varepsilon}
\newcommand{\id}{\operatorname{Id}}
\newcommand{\calb}{\mathcal{B}}
\begin{document}

\title[Diametral notions in the unit ball of some vector-valued function spaces]{On various diametral notions of points in the unit ball of some vector-valued function spaces}

\begin{abstract}

In this article, we study the ccs-Daugavet, ccs-$\Delta$, super-Daugavet, super-$\Delta$, Daugavet, $\Delta$, and $\nabla$ points in the unit balls of vector-valued function spaces $C_0(L, X)$, $A(K, X)$, $L_\infty(\mu, X)$, and $L_1(\mu, X)$. To partially or fully characterize these diametral points, we first provide improvements of several stability results under $\oplus_\infty$ and $\oplus_1$-sums shown in the literature. For complex Banach spaces, $\nabla$ points are identical to Daugavet points, and so the study of $\nabla$ points only makes sense when a Banach space is real. Consequently, we obtain that the seven notions of diametral points are equivalent for $L_\infty(\mu)$ and uniform algebra when $K$ is infinite.

\end{abstract}



\keywords{Daugavet points, $\Delta$-points, Daugavet property, polynomial Daugavet property, uniform algebra}
\subjclass[2010]{Primary 46B20; Secondary 46B04, 46E40, 46J10}
\thanks{
The first author was supported by Basic Science Research Program through the National Research Foundation of Korea (NRF) funded by the Ministry of Education, Science and Technology [NRF-2020R1A2C1A01010377].
}
\thanks{			
The second author was supported by Basic Science Research Program through the National Research Foundation of Korea (NRF) funded by the Ministry of Education, Science and Technology [NRF-2020R1A2C1A01010377], and also by Spanish MICIU / AEI / 10.13039 / 501100011033 and ERDF/EU through the grant PID2021-122126NB-C33.
}
\thanks{
The third author was supported by Basic Science Research Program through the National Research Foundation of Korea (NRF) funded by the Ministry of Education, Science and Technology [NRF-2020R1A2C1A01010377].
}
\author[Han Ju Lee]{Han Ju Lee}
\address[Han Ju Lee]{Department of Mathematics Education, Dongguk University, 04620 Seoul, Republic of Korea. 
\href{https://orcid.org/0000-0001-9523-2987}{ORCID: \texttt{0000-0001-9523-2987}}}
\email{\texttt{hanjulee@dgu.ac.kr}}

\author[\'Oscar Rold\'an]{\'Oscar Rold\'an}
\address[\'Oscar Rold\'an]{Departamento de An\'{a}lisis Matem\'{a}tico,
Universidad de Valencia, Doctor Moliner 50, 46100 Burjasot (Valencia), Spain.
\href{https://orcid.org/0000-0002-1966-1330}{ORCID: \texttt{0000-0002-1966-1330}}}
\email{oscar.roldan@uv.es}

\author[Hyung-Joon Tag]{Hyung-Joon Tag}
\address[Hyung-Joon Tag]{Department of Mathematics and Statistics, University of North Carolina at Greensboro, Greensboro, North Carolina 27402, USA. \href{https://orcid.org/0009-0005-1787-135X}{ORCID: \texttt{0009-0005-1787-135X}}}
\email{hjtag4@gmail.com}
\date{\today}

\maketitle


\section{Introduction}\label{section-introduction}

In 1963, I.K. Daugavet \cite{Daugavet63} showed that   every compact operator $T$ on  C([0,1]),  the Banach space of real-valued continuous functions on the unit interval $[0, 1]$, satisfies the equation
\begin{equation}\label{Daugavet-equation}
\|\Id+T\|=1+\|T\|.
\end{equation}
A Banach space $X$ is said to have the \textit{Daugavet property} if all rank-one operators on $X$ satisfy \eqref{Daugavet-equation}, and in this case all weakly compact operators also satisfy the same equation \cite[Theorem 2.3]{KSSW00}. The study of the Daugavet property and its consequences to the geometry of Banach spaces is a very active line of research nowadays. We refer to \cite{KSSW00,Werner01} for background. The Daugavet property is also closely related to various diameter two properties, which means that certain subsets of the unit ball of a Banach space $X$ have diameter two. These properties are at the opposite end of the spectrum from well-known properties such as the Radon-Nikod\'ym property or the point of continuity property. We refer to \cite[Sections 1 and 2]{MPZpre} for a detailed exposition of these concepts and their related results. 

In this article, we study classes of points on the unit sphere $S_X$ of a Banach space $X$ closely connected to the ``diametral'' (or pointwise) versions of the diameter two properties.

Let $X$ and $Y$ be Banach spaces over the scalar field $\bbk$,  a real or complex field. Denote by $B_X$, $S_X$, and $X^*$ the closed unit ball of $X$, the unit sphere of $X$, and the topological dual of $X$, respectively. Let $\call(X,Y)$ be the space of all linear and bounded operators from $X$ to $Y$. 
Given a bounded set $A\subset X$, a functional $x^*\in S_{X^*}$, and a positive $\varepsilon>0$, the corresponding \textit{slice} is the set
$$S(A, x^*, \varepsilon):=\{x\in A:\, \re\,x^*(x)>\sup_{y\in A} \re\,x^*(y)-\varepsilon\}.$$
When the set $A$ coincides with $B_X$, we shall just write $S(x^*, \varepsilon)$ if there is no possible confusion. A convex combination of slices of $A$ is a set $C=\sum_{i=1}\lambda_i S_i$, for some $n\in\bbn$, some $\lambda_1,\ldots,\lambda_n>0$ with $\sum_{i=1}^n \lambda_i=1$, and some slices $S_1, \ldots, S_n$ of $A$ defined as 
$$C:=\left\{x\in A:\, x=\sum_{i=1}^n \lambda_i x_i, \text{ where }x_i\in S_i\text{ for all }1\leq i \leq n\right\}.$$

\begin{Definition}
Let $X$ be a Banach space. Then
\begin{enumerate}
\item $X$ is said to have the \textit{restricted diametral strong diameter two property} (\textit{restricted-DSD2P}) if for every convex combination of slices $C$ of $B_X$ and every $x \in C \cap S_X$, $\sup_{y \in C} \|x - y\| = 2$.
\item $X$ is said to have the \textit{diametral diameter two property} (\textit{DD2P}) if for every nonempty weakly open subset $W$ of $B_X$ and every $x \in W \cap S_X$, $\sup_{y \in W}\|x - y\| = 2$.
\item $X$ is said to have the \textit{diametral local diameter two property} (\textit{DLD2P}) if for every slice $S$ of $B_X$ and every $x \in S \cap S_X$, $\sup_{y \in W}\|x - y\| = 2$.
\item $X$ is said to have the \textit{property ($D$)} if every rank-one, norm-one projection $P:X \rightarrow X$ satisfies $\|I  - P\| = 2$.
\end{enumerate}
\end{Definition}

Note that if a Banach space satisfies one of these properties, then every slice of the unit ball has diameter two. Moreover, it is well known that the following implications hold.

\[\begin{tikzcd}
	{\text{Daugavet}} & {\text{DD2P}} & {\text{DLD2P}} \\
	{\text{Restricted DSD2P}} && {\text{(D)}}
	\arrow["{(a)}", from=1-1, to=1-2]
	\arrow["{(c)}", from=1-1, to=2-1]
	\arrow["{(b)}", from=1-2, to=1-3]
	\arrow["{(e)}", from=1-3, to=2-3]
	\arrow["{(d)}"', from=2-1, to=1-3]
\end{tikzcd}\]

The converse of implication $(a)$ as well as the converse of implication $(c)$ do not hold in general (see \cite{BLR18} and \cite[Section 4.3.2]{MPZpre}). We also mention that, to the best of the authors' knowledge, the converses of implications $(b)$, $(d)$, and $(e)$ remain unsolved.

In \cite{AHLP}, Daugavet and $\Delta$ points were introduced and studied in detail as localized versions of  the Daugavet property and DLD2P, respectively. The study of these notions has attracted a significant amount of researchers and provided interesting results and examples related to the diameter two properties. Furthermore, new notions related to these points were introduced and extensively studied in \cite{MPZpre}.

\begin{Definition}\label{def-six-notions}
Let $X$ be a Banach space.
\begin{enumerate}
\item A point $x \in S_X$ is called a \textit{ccs-Daugavet} point if $\sup_{y \in C}\|x - y\| = 2$ for every convex combination of slices $C$ of $B_X$.
\item A point $x \in S_X$ is called a \textit{super-Daugavet} point if $\sup_{y \in W}\|x - y\| = 2$ for every nonempty weakly open subset $W$ of $B_X$.
\item A point $x \in S_X$ is called a \textit{Daugavet} point if $\sup_{y \in S}\|x - y\| = 2$ for every slice $S$ of $B_X$.
\item A point $x \in S_X$ is called a \textit{ccs-$\Delta$} point if $\sup_{y \in C} \|x - y\| = 2$ for every convex combination $C$ of slices of $B_X$ that contains $x$.
\item A point $x \in S_X$ is a \textit{super-$\Delta$} point if $\sup_{y \in W}\|x - y\| = 2$ for every nonempty weakly open subset $W$ of $B_X$ that contains $x$.
\item A point $x \in S_X$ is a \textit{$\Delta$} point if $\sup_{y \in S}\|x - y\| = 2$ for every slice $S$ of $B_X$ that contains $x$.
\item A point $x \in S_X$ is a \textit{$\nabla$} point if $\sup_{y \in S}\|x - y\| = 2$ for every slice $S$ of $B_X$ that does not contain $x$.
\end{enumerate}
\end{Definition} 
\noindent We mention that $X$ has the Daugavet property, the restricted DSD2P, the DD2P, and the DLD2P if and only if every point in $S_X$ is a Daugavet point, a ccs-$\Delta$ point, a super-$\Delta$ point, and a $\Delta$ point, respectively \cite{MPZpre}. If $X$ has the Daugavet property, then every point in $S_X$ is a ccs-Daugavet point as a consequence of \cite[Lemma 3]{Shvydkoy00} and the results derived from its proof (see the discussion after \cite[Definition 2.5]{MPZpre}). From their definitions and the Bourgain's Lemma \cite[Lemma II.1]{GGMS87}, the following implications always hold for a point $x\in S_X$:



\[\begin{tikzcd}
	{\text{ccs-Daugavet}} & {\text{super-Daugavet}} & {\text{Daugavet}} \\
	& {\text{super-}\Delta} \\
	{\text{ccs-}\Delta} && \Delta
	\arrow[from=1-1, to=1-2]
	\arrow[from=1-1, to=3-1]
	\arrow[from=1-2, to=1-3]
	\arrow[from=1-2, to=2-2]
	\arrow[from=1-3, to=3-3]
	\arrow[from=2-2, to=3-3]
	\arrow[from=3-1, to=3-3]
\end{tikzcd}\]

Clearly, every Daugavet point is a $\nabla$ point, and it is known that every $\nabla$ point is either a Daugavet point or a strongly exposed point (this is shown in \cite[Theorem 2.3]{HLPV} in the real case, and its proof can be easily adapted to the complex case). In \cite{HLPV}, $\nabla$ points are characterized for real-valued $C_0(L)$ and $L_1(\mu)$ spaces, and unlike with the other six notions, there are finite-dimensional real Banach spaces containing $\nabla$ points in the sphere. It is worth noting, however, that in complex spaces, $\nabla$ points and Daugavet points always coincide (see Proposition \ref{prop-nablas-complex}). We refer to \cite{CILMPQQRR24,JCpre,JR22,HLPV,MPZpre} for up-to-date information on all these notions and the relations between them.

Even though these concepts are not generally equivalent, some of them may coincide in certain Banach spaces. For instance, the six first notions from Definition \ref{def-six-notions} are equivalent in all isometric $L_1(\mu)$-preduals \cite[Corollary 4.5]{MPZpre}. In particular, this holds for $C(K)$ spaces, where such points can be characterized by the norm-attainment at limit points in $K$ \cite[Theorem 3.4]{AHLP}. In $L_1(\mu)$ spaces, the equivalence between Daugavet, $\Delta$, super-Duagavet, and super-$\Delta$ points is shown in \cite[Section 4.2]{MPZpre}, and these points are characterized by the nonexistence of an atom in their supports. In fact, these notions are also equivalent to ccs-$\Delta$ in these spaces (this was shown in \cite[Proposition 4.9]{MPZpre} in the real case, and we will see the complex case in Theorem \ref{theorem:ccs-delta-L1}). However, these notions are not equivalent to ccs-Daugavet in general for those spaces (see \cite[Proposition 4.12]{MPZpre}).


In \cite[Question 7.11]{MPZpre} the authors posed an open question to characterize all these notions in other Banach spaces, such as uniform algebras and spaces of vector-valued functions. The main objective of this paper is to tackle this question for $C_0(L, X)$, $A(K, X)$ (in particular uniform algebras), $L_1(\mu, X)$, and $L_{\infty}(\mu, X)$ spaces.

The paper is structured as follows. In Section \ref{section-preliminaries}, we recall the necessary notations and preliminary results. In Section \ref{section:stability}, we recall well-known results (\cite{AHLP,HLPV,HPV21,MPZpre}) on the stability of $\nabla$ points, Daugavet points, $\Delta$ points, and their variants under $\oplus_\infty$- and $\oplus_1$-sums of Banach spaces, and we improve and extend some of those results. Finally, Section \ref{section-results} is devoted to studying these diametral notions in several classes of Banach spaces. In Subsection \ref{subsection:C0LX}, we provide characterizations of those points in $C_0(L,X)$ spaces (Theorem \ref{theo-charac-Daug-C0LX}). 
As a consequence, we also characterize several diametral diameter two properties in these spaces when $X$ is locally uniformly rotund (Corollary \ref{cor:c0lxequiv}). In Subsection \ref{subsection:AKX}, we show that all the results from Subsection \ref{subsection:C0LX} can be analogously stated for $A(K,X)$ spaces (see Section \ref{section-preliminaries} for the definition). 
Hence, we characterize all 7 notions in uniform algebras $A(K)$ (Corollary \ref{cor:charac-unif-algebras}), which improves \cite[Corollary 5.5]{LT22} and fully solves \cite[Question 7.11]{MPZpre} for these spaces. Another consequence is an improvement of \cite[Proposition 4.15]{LT24} (Corollary \ref{cor:akxequiv}). In Subsection \ref{subsection:Linfty(mu,X)}, we characterize $\nabla$, $\Delta$, and Daugavet points in $L_\infty(\mu, X)$ spaces and provide conditions for the other notions. In particular, an easy description of all 7 types of points in $L_\infty(\mu)$ spaces is derived (see Theorem \ref{theo:charac-Daug-points-Linfty} and Example \ref{example:ell_infty}). Finally, in Subsection \ref{subsection:L1(mu,X)}, we characterize $\nabla$, $\Delta$, and Daugavet points in $L_1(\mu, X)$ spaces and provide several conditions for the other notions. We also improve \cite[Proposition 4.9]{MPZpre} and characterize ccs-$\Delta$ points in $L_1(\mu)$ spaces in the complex case, which was implicitly left open in \cite[Section 4]{MPZpre} (Theorem \ref{theorem:ccs-delta-L1} and Corollary \ref{cor:notions-L1(mu)}).

\section{Preliminaries}\label{section-preliminaries}



\subsection{The spaces \texorpdfstring{$\boldsymbol{C_0(L,X)}$}{C0(L,X)}, \texorpdfstring{$\boldsymbol{A(K,X)}$}{A(K,X)}, and \texorpdfstring{$\boldsymbol{L_p(\mu, X)}$}{Lp(mu,X)}} 
Let $L$ be a locally compact Hausdorff topological space. The space $C_0(L, X)$ is the set of $X$-valued continuous functions on $L$ that vanish at infinity, endowed with the supremum norm. That is, for each $f \in C_0(L, X)$ and $\epsilon > 0$, there exists a compact $K \subset L$ such that $\|f(t)\|_X < \epsilon$ for all $t \in L \setminus K$. If $X = \mathbb{R}$ or $\mathbb{C}$, then we obtain the usual $C_0(L)$ space. Also, if $L=K$ is compact, we have the usual $C(K, X)$ and $C(K)$ spaces, respectively. 


For a compact Hausdorff topological space $K$, a \textit{uniform algebra} $A(K)$ is a closed subspace of $C(K)$ that contains all constant functions and separates points. Here ``separating points" means that for every $s \neq t\in K$, there exists $f \in A(K)$ such that $f(s) \neq f(t)$. Examples of uniform algebras include the disk algebra $A(\mathbb{D})$ and the space $A_{w^*u}(B_{X^*})$ of weak* uniformly continuous functions that are holomorphic on the interior of $B_{X^*}$. In the case of uniform algebras, we only need to consider the range space to be $\mathbb{C}$ because the real uniform algebra is $C(K)$ by the Stone-Weierstrass Theorem (see \cite[pg 122]{Rudin91}).

To study the geometrical properties of uniform algebras and their related spaces, certain points in $K$ play an important role. A point $t_0 \in K$ is said to be a \textit{strong boundary point} if for every open neighborhood $U$ of $t_0$, there exists $f_U \in S_{A(K)}$ such that $f_U(t_0) = 1$ and $\sup_{t \in K \setminus U}|f_U(t)| < 1$.  A boundary $S$ for $A(K)$ is the set of elements in $K$ such that for every $f \in A(K)$, there exists an element $t \in S$ such that $|f(t)| = \|f\|_{\infty}$. The smallest closed boundary for $A(K)$ is called the {\it Shilov boundary}, denoted by $\Gamma$. It is known \cite{Dales, Leibowitz} that  the set of strong boundary points coincides with the Choquet boundary $\Gamma_0$ for $A(K)$, that is, the set of all extreme points on $K_{A(K)} = \{\lambda \in A(K)^* : \|\lambda\| = \lambda(1_{A(K)}) = 1\}$, where $1_{A(K)}$ is the unit of $A(K)$, and that $\Gamma_0$ is dense in $\Gamma$.



Now, we recall the definition of a class of function spaces associated with uniform algebras. The space $A(K, X)$ is a closed subspace of $C(K, X)$ that satisfies the following properties:
\begin{enumerate}
\item The base algebra $A:= \{x^* \circ f : x^* \in X^* \,\,\, \text{and} \,\,\, f \in A(K,X)\}$ is a uniform algebra over $K$.
\item $A \otimes X \subset A(K,X)$.
\item For every $f \in A$ and $g \in A(K, X)$, $f\cdot g \in A(K, X)$.
\end{enumerate}
This space is endowed with the supremum norm $\|f\| = \sup_{t \in K}\|f(t)\|_X$. If $X = \mathbb{R}$ or $\mathbb{C}$, then we have a uniform algebra $A(K)$. For more information on this vector-valued function space, we refer to \cite{LT22,LT24}. 
To prove certain results in $A(K, X)$, we will only consider the space of functions in $A(K)$ restricted to its Shilov boundary $\Gamma$, and we denote this by $A(\Gamma)$. We adopt the analogous notation for the space $A(K, X)$. It is well-known that $A(\Gamma)$ is isometrically isomorphic to $A(K)$ \cite[Theorem 4.1.6]{Leibowitz}.

Let $(\Omega, \Sigma, \mu)$ be a  measure space, and let $X$ be a Banach space. Given a measurable set $A$ in $\Sigma$, $\chi_A$ is the {\it characteristic function} on $A$ which has the value 1 on $A$ and 0 on $\Omega\setminus A$.  A function $f:\Omega\to X$ is called {\it $\mu$-simple} if it is of the form
\[ f (\omega)= \sum_1^n x_i \chi_{A_i}(\omega)   \ \ \ (\omega\in \Omega),\] where  $x_1, \dots, x_n\in X$. 
Recall that a function $f:\Omega\to X$ is said to be a strongly $\mu$-measurable if it is a pointwise limit of a sequence of $\mu$-simple functions. Two strongly measurable functions $f, g$ are said to be {\it equivalent} if they are equal $\mu$-a.e.

For $1 \leq p < \infty$, $L_{p}(\mu, X)$ is the Banach space of equivalent classes of  $p$-Bochner integrable functions, that is,   strongly $\mu$-measurable functions satisfying
\[\|f\|_p = \left(\int_{\Omega} \|f(t)\|^p_X d\mu\right)^{1/p} < \infty.\] 

In the case of $p = \infty$, the space $L_{\infty}(\mu, X)$ is defined to be the Banach space of all  equivalent classes of essentially bounded strongly $\mu$-measurable functions $f$, that is, $f$ satisfies the condition
\[ \mu(\{ \omega : \|f(\omega)\|_X>r\})=0\] for some $r>0$.
In this case, the essential supremum norm   $\|f\|_\infty$ is defined to be the infimum of all $r>0$ satisfying $\mu(\{ \omega : \|f(\omega)\|_X>r\})=0$. 

If $X=\bbr$ or $\bbc$, then we get the usual $L_p(\mu)$ spaces. To characterize the notions in $L_p(\mu, X)$ with full generality, we do not assume the $\sigma$-finiteness and nonatomicity of $\mu$ here. Also, we only need to consider atoms of finite measure, since every strongly $\mu$-measurable function has value $0$ a.e. on atoms of infinite measure. We say that two atoms $A$ and $B$ are identified if their characteristic functions $\chi_A$ and $\chi_B$ are equivalent. 

As a final remark, there is an alternative space $L_\infty(\mu, X)$ in the literature, where the functions are assumed to be strongly measurable instead of strongly $\mu$-measurable (that is, the supports of characteristic functions may not have finite measure). Although this space presents some differences with the one we will use, we remark that all our results in Subsection \ref{subsection:Linfty(mu,X)} are also true for that space without any changes.




\subsection{Further preliminary results}
We conclude this section with remarks on certain classes of points in the unit sphere that will be relevant for later discussions.

Given a Banach space $X$, a point $x\in S_X$, and some $\varepsilon>0$, denote $\Delta_\varepsilon(x):=\{y\in B_X:\, \|x-y\|\geq 2-\varepsilon\}$. The following well-known characterizations will be used throughout the text without a reference.

\begin{lemma}[{\cite[Lemmas 2.1 and 2.2]{AHLP} real case, \cite[Theorems 2.2 and 2.4]{LT24} complex case, and \cite[Proposition 3.4]{MPZpre}}]
Let $X$ be a Banach space, and let $x\in S_X$. Then,
\begin{itemize}
\item $x$ is $\Delta$ if and only if $x\in \overline{\operatorname{co}(\Delta_\varepsilon(x))}$.
\item $x$ is Daugavet if and only if $B_X= \overline{\operatorname{co}(\Delta_\varepsilon(x))}$ for every $\epsilon>0$.
\item $x$ is super-$\Delta$ if and only if there is a net $\{x_\lambda\}_{\lambda\in \Lambda}\subset B_X$ such that $x_\lambda \stackrel{w}{\rightarrow}x$ but $\|x-x_\lambda\|\rightarrow 2$.
\item $x$ is super-Daugavet if and only if for every $z\in B_X$, there is a net $\{x_\lambda\}_{\lambda\in \Lambda}\subset B_X$ such that $x_\lambda \stackrel{w}{\rightarrow}z$ but $\|x-x_\lambda\|\rightarrow 2$.
\end{itemize}
\end{lemma}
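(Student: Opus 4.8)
The plan is to treat separately the two ``closed-convex-hull'' items (the $\Delta$- and Daugavet-point characterisations), which rest on Hahn--Banach separation, and the two ``net'' items (the super-$\Delta$ and super-Daugavet characterisations), which come from a routine diagonalisation over weak neighbourhoods. Throughout I would work with $\re x^*$ and with the identity $\sup_{y\in B_X}\re x^*(y)=\|x^*\|$, so that the real and complex cases are handled at once; this is essentially why the statement cites separate real and complex sources. I read the first bullet, like the second, as asserting the membership for every $\varepsilon>0$.

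For the $\Delta$- and Daugavet-point items, the crucial observation is that the complement in $B_X$ of a slice $S(x^*,\eta)$ is the \emph{closed convex} set $\{y\in B_X:\re x^*(y)\le\sup_{B_X}\re x^*-\eta\}$. First I would record the elementary reformulation: $x$ is a $\Delta$-point if and only if, for every $\varepsilon>0$, every slice of $B_X$ containing $x$ meets $\Delta_\varepsilon(x)$; and $x$ is a Daugavet point if and only if, for every $\varepsilon>0$, \emph{every} slice of $B_X$ meets $\Delta_\varepsilon(x)$. Both follow from the equivalence, valid for any slice $S$, of $\sup_{y\in S}\|x-y\|=2$ with ``$S\cap\Delta_\varepsilon(x)\ne\emptyset$ for all $\varepsilon>0$'' (using $\|x-y\|\le2$ on $B_X$). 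Now fix $\varepsilon>0$. If some slice $S$ containing $x$ misses $\Delta_\varepsilon(x)$, then $\Delta_\varepsilon(x)$, hence $\overline{\operatorname{co}}(\Delta_\varepsilon(x))$, lies in the closed convex complement of $S$, so $x\notin\overline{\operatorname{co}}(\Delta_\varepsilon(x))$. Conversely, if $x\notin\overline{\operatorname{co}}(\Delta_\varepsilon(x))$, then Hahn--Banach yields $x^*\in S_{X^*}$ with $\sup_{y\in\Delta_\varepsilon(x)}\re x^*(y)<\re x^*(x)$, and taking the width $\eta$ of $S(x^*,\eta)$ strictly between $\|x^*\|-\re x^*(x)$ and $\|x^*\|-\sup_{y\in\Delta_\varepsilon(x)}\re x^*(y)$ gives a slice containing $x$ and disjoint from $\Delta_\varepsilon(x)$. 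This proves the $\Delta$-point item. The Daugavet-point item is identical, replacing ``slices containing $x$'' by ``all slices'', ``$x\in\overline{\operatorname{co}}(\Delta_\varepsilon(x))$'' by ``$B_X=\overline{\operatorname{co}}(\Delta_\varepsilon(x))$'', and using that a nonempty slice cannot lie inside $B_X$ minus a nonempty slice, together with separation of any point of $B_X$ from a closed convex set not containing it.

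For the super-$\Delta$ item, necessity is the diagonal construction: consider the pairs $(W,n)$ with $W$ a weakly open subset of $B_X$ containing $x$ and $n\in\bbn$, directed by declaring $(W,n)$ to dominate $(W',n')$ when $W\subseteq W'$ and $n\ge n'$. For each $(W,n)$ the super-$\Delta$ hypothesis supplies $x_{(W,n)}\in W$ with $\|x-x_{(W,n)}\|>2-1/n$; then $x_{(W,n)}\stackrel{w}{\rightarrow}x$ while $\|x-x_{(W,n)}\|\to2$. Sufficiency is immediate: a net $\{x_\lambda\}$ with $x_\lambda\stackrel{w}{\rightarrow}x$ and $\|x-x_\lambda\|\to2$ is eventually inside any prescribed weakly open $W\ni x$, so $\sup_{y\in W}\|x-y\|=2$. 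The super-Daugavet item is word for word the same, except that $W$ ranges over weakly open neighbourhoods of an arbitrary fixed $z\in B_X$ (every nonempty weakly open subset contains such a $z$), producing a net weakly convergent to $z$.

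I do not anticipate a genuine obstacle: the content is standard and matches exactly what the cited references establish. The points requiring a little care are (i) keeping the distinction between ``the supremum equals $2$'' and ``it is attained'', which is why everything is phrased through the family $\{\Delta_\varepsilon(x)\}_{\varepsilon>0}$ rather than a single extremal configuration; (ii) in the separation step, verifying that the slice width can be chosen so that the slice simultaneously contains $x$ and avoids $\Delta_\varepsilon(x)$, which reduces to the strict inequality produced by Hahn--Banach; and (iii) the degenerate case $\Delta_\varepsilon(x)=\emptyset$, where $x$ is automatically neither a $\Delta$- nor a Daugavet point and $\overline{\operatorname{co}}(\Delta_\varepsilon(x))=\emptyset$, so both equivalences hold vacuously. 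Accordingly I would present the lemma with at most a brief indication of proof, or simply cite the sources, as the excerpt already does.
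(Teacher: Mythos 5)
Your argument is correct, and it is essentially the standard one: the paper itself states this lemma without proof (citing \cite{AHLP}, \cite{LT24}, and \cite{MPZpre}), and your Hahn--Banach separation argument for the two closed-convex-hull items together with the diagonal-net construction over weakly open neighbourhoods for the two super items is exactly how those references establish it. The points of care you flag (working with $\re x^*$ to cover both scalar fields, phrasing everything through the family $\{\Delta_\varepsilon(x)\}_{\varepsilon>0}$, and the choice of slice width in the separation step) are the right ones, and the degenerate case $\Delta_\varepsilon(x)=\emptyset$ is handled correctly.
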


Recall that $x\in S_X$ is \textit{strongly exposed} if there is a functional $x^*\in S_{X^*}$ such that for every $\varepsilon>0$, there is some $\delta>0$ such that the slice $S(x^*, \delta)$ contains $x$ and has diameter less than $\varepsilon$. In this case we say that $x$ is strongly exposed by $x^*$, or that $x^*$ strongly exposes $x$. On the other hand, $x\in S_X$ is \textit{denting} if for every $\varepsilon>0$, there are a functional $x^*\in S_{X^*}$ and some $\delta>0$ such that the slice $S(x^*, \delta)$ contains $x$ and has diameter less than $\varepsilon$. Note that strongly exposed points are always denting.











In \cite[Theorem 2.3, Proposition 2.6]{HLPV} it was shown that $\nabla$ points $x\in S_X$ in a \textit{real} Banach space must be either Daugavet or strongly exposed, and must be at distance $2$ from every denting point $y\neq x$. It is easy to check that those results also hold in the complex case, and from that we get that in a complex Banach space, $\nabla$ and Daugavet points actually always coincide.

\begin{proposition}\label{prop-nablas-complex}
In a complex Banach space $X$, $x\in S_X$ is a $\nabla$ point if and only if it is a Daugavet point.
\end{proposition}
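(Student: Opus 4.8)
The implication ``Daugavet $\Rightarrow$ $\nabla$'' requires nothing: a slice of $B_X$ that does not contain $x$ is in particular a slice of $B_X$, so if $x$ is a Daugavet point then $\sup_{y\in S}\|x-y\|=2$ for every such slice. For the converse, the plan is to use the dichotomy recalled just before the statement — the arguments of \cite[Theorem 2.3]{HLPV} adapt to the complex case to show that a $\nabla$ point of $X$ is either a Daugavet point or a strongly exposed point. So it suffices to prove that \emph{no strongly exposed point of a complex Banach space is a $\nabla$ point}; granting that, a $\nabla$ point is forced into the Daugavet alternative, and we are done.

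To rule out the strongly exposed case, suppose $x\in S_X$ is strongly exposed by $x^*\in S_{X^*}$; multiplying $x^*$ by a unimodular scalar we may assume $x^*(x)=1$. Fix $\varepsilon\in(0,2)$ and choose $\delta>0$ with $x\in S(x^*,\delta)$ and $\diam S(x^*,\delta)<\varepsilon$. The idea is that, in the complex setting, a tiny rotation of $x^*$ yields a slice that \emph{excludes} $x$ yet is squeezed inside $S(x^*,\delta)$. Concretely, for a small angle $\theta\neq 0$ put $y^*_\theta:=e^{i\theta}x^*$, so $\|y^*_\theta\|=1$ and $\re\,y^*_\theta(x)=\cos\theta<1=\sup_{z\in B_X}\re\,y^*_\theta(z)$; hence for $0<\eta\le 1-\cos\theta$ the (nonempty) slice $S(y^*_\theta,\eta)$ does not contain $x$. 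On the other hand, if $z\in B_X$ satisfies $\re\,y^*_\theta(z)>1-\eta$, then writing $x^*(z)=re^{i\varphi}$ with $0\le r\le 1$ one gets $r\cos(\theta+\varphi)>1-\eta$, which forces both $r>1-\eta$ and $\cos(\theta+\varphi)>1-\eta$; consequently $|\varphi|<|\theta|+\arccos(1-\eta)$ and $\re\,x^*(z)=r\cos\varphi$ can be made as close to $1$ as desired by shrinking $\theta$ and $\eta$. Thus one first picks $\theta\neq 0$ with $\cos\theta>1-\delta$, and then picks $\eta\in(0,1-\cos\theta]$ small enough that $\re\,x^*(z)>1-\delta$ for every such $z$; for this choice $S(y^*_\theta,\eta)\subseteq S(x^*,\delta)$.

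Finally, every $y\in S(y^*_\theta,\eta)$ then lies in $S(x^*,\delta)$ together with $x$, so $\|x-y\|\le\diam S(x^*,\delta)<\varepsilon$, giving $\sup_{y\in S(y^*_\theta,\eta)}\|x-y\|\le\varepsilon<2$ for a slice not containing $x$; hence $x$ is not a $\nabla$ point, a contradiction. I expect the only delicate point to be this last elementary estimate — in particular the order of quantifiers: $\theta$ must be fixed first (to control $\cos\theta$ against $\delta$) and only then $\eta$, and one should keep the bound $\eta\le 1-\cos\theta$ so that $x$ really falls out of the rotated slice. The other ingredient to be taken on faith from \cite{HLPV} is that the proof of the Daugavet/strongly-exposed dichotomy for $\nabla$ points goes through verbatim with complex scalars; everything else is routine. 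This also explains why the statement genuinely needs complex scalars: in the real case the rotation $e^{i\theta}x^*$ is unavailable, and indeed strongly exposed $\nabla$ points exist there.
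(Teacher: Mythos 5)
Your proof is correct, and its first half — invoking the complex adaptation of \cite[Theorem 2.3]{HLPV} to reduce everything to ruling out strongly exposed $\nabla$ points — is exactly the paper's starting point. Where you genuinely diverge is in how the strongly exposed case is excluded. The paper rotates the \emph{point}: since $\theta x$ is strongly exposed (hence denting) for every unimodular $\theta$, the point $x$ lies at distance $|1-\theta|<2$ from a denting point different from itself, which contradicts \cite[Proposition 2.6]{HLPV} (a $\nabla$ point must be at distance $2$ from every other denting point). You instead rotate the \emph{functional}: the slice $S(e^{i\theta}x^*,\eta)$ misses $x$ but, for $\theta$ and then $\eta$ chosen small, sits inside the small-diameter slice $S(x^*,\delta)$, directly violating the definition of a $\nabla$ point. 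Your quantifier bookkeeping ($\theta$ fixed first with $\cos\theta>1-\delta$, then $\eta\le 1-\cos\theta$) is right, and the step $r\cos(\theta+\varphi)>1-\eta$ forcing both $r>1-\eta$ and $\cos(\theta+\varphi)>1-\eta$ is sound since each factor is at most $1$. The trade-off: the paper's argument is two lines but leans on a second imported result (Proposition 2.6 of \cite{HLPV}, which also needs a complex adaptation), whereas yours needs only the dichotomy and is otherwise self-contained from the definition of a $\nabla$ point. Both exploit the same complex-specific phenomenon — multiplication by unimodular scalars — just on opposite sides of the duality, and your closing remark correctly identifies why the real case behaves differently.
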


\begin{proof}
Let $X$ be a complex Banach space, and suppose that $x\in S_X$ is a $\nabla$ point that is not Daugavet. By \cite[Theorem 2.3]{HLPV}, $x$ is a strongly exposed point. But then, $\theta x$ is also strongly exposed for every $\theta\in S_{\bbc}$. In particular, $x$ is at distance less than $2$ from a denting point, which contradicts \cite[Proposition 2.6]{HLPV}.
\end{proof}

\section{Stability results}\label{section:stability}

In this section, we gather some well-known (see \cite{AHLP,HLPV,HPV21,MPZpre}) and some new additional stability results on the diametral points under $\oplus_1$- and $\oplus_\infty$-sums that will be used throughout this paper. Here the term ``downward stability" stands for the inheritance from those sums to their component spaces. On the other hand, the term ``upward stability" means the inheritance from component spaces to their sums. It is worth noting that the stability results from \cite{AHLP,HLPV,HPV21} were only stated and proved in the real case, but a quick glance at the proofs shows that all of them can be easily adapted to the complex case.

\subsection{\texorpdfstring{$\boldsymbol{\nabla}$}{Nabla} points}\label{subsection:stability-nabla}

The stability of $\nabla$ points under absolute sums was studied in detail in \cite{HLPV} for \textit{real} Banach spaces, although as we mentioned, those results also hold in the complex case. In the case of $\oplus_\infty$-sums, they can only be different from Daugavet points if both coordinates have norm 1.

\begin{lemma}[{\cite[Proposition 3.5]{HLPV}}]\label{nabla-infty-sums}
Let $X_1, X_2$ be  Banach spaces, let $x_1\in S_{X_1}$, $x_2\in S_{X_2}$, and $b\in [0,1]$. The following holds:
\begin{enumerate}
\item If $b<1$, then $(x_1, b x_2)\in S_{X_1\oplus_\infty X_2}$ is $\nabla$ if and only if $x_1$ and $(x_1, b x_2)$ are Daugavet.
\item If $b=1$, then $(x_1, x_2)\in S_{X_1\oplus_\infty X_2}$ is $\nabla$ if and only if either at least one of $x_1$ or $x_2$ is Daugavet, or $x_1$ and $x_2$ are both $\nabla$.
\end{enumerate}
\end{lemma}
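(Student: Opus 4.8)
The plan is to reduce everything to elementary slice manipulations in $X := X_1 \oplus_\infty X_2$ using the identification $X^* = X_1^* \oplus_1 X_2^*$: a slice of $B_X$ is given by $g = (g_1, g_2)$ with $\|g_1\| + \|g_2\| = 1$, and $(y_1, y_2) \in S(g, \delta)$ exactly when $\re g_1(y_1) + \re g_2(y_2) > 1 - \delta$; the observation that drives the whole proof is that then, since $\re g_i(y_i) \le \|g_i\|$, necessarily $y_i \in S\bigl(g_i/\|g_i\|,\, \delta/\|g_i\|\bigr)$ whenever $g_i \neq 0$. I would also first record two facts on Daugavet points under $\oplus_\infty$-sums (standard; I indicate the short arguments): \emph{(upward)} if $\|x_1\| = 1$ and $x_1$ is Daugavet in $X_1$, then $(x_1, tx_2)$ is Daugavet in $X$ for all $t \in [0,1]$ and $x_2 \in S_{X_2}$ --- given $S(g, \varepsilon)$, if $g_1 = 0$ use $y = (-x_1, y_2)$ for $y_2$ in the nonempty slice $\{y_2 \in B_{X_2} : \re g_2(y_2) > 1 - \varepsilon\}$ (distance $2$), and if $g_1 \neq 0$ fix $y_2$ with $\re g_2(y_2) > \|g_2\| - \tfrac{\varepsilon}{2}$ and use that $x_1$ is Daugavet to find $y_1 \in S\bigl(g_1/\|g_1\|,\, \varepsilon/(2\|g_1\|)\bigr)$ with $\|x_1 - y_1\| > 2 - \varepsilon$; and \emph{(downward)} if $(x_1, bx_2)$ with $\|x_1\| = 1$, $b < 1$ is Daugavet in $X$, then $x_1$ is Daugavet in $X_1$ --- feeding the slice $S\bigl((g_1, 0), \varepsilon\bigr)$ with $0 < \varepsilon < \min(\delta_0, 1 - b)$ to the Daugavet property of $(x_1, bx_2)$ yields $(y_1, y_2)$ with $\max(\|x_1 - y_1\|, \|bx_2 - y_2\|) > 2 - \varepsilon$, and $\|bx_2 - y_2\| \le 1 + b < 2 - \varepsilon$ forces $\|x_1 - y_1\| > 2 - \varepsilon$ with $y_1 \in S(g_1, \varepsilon) \subseteq S(g_1, \delta_0)$.

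With these in hand, much of the statement follows quickly. Since Daugavet points are $\nabla$ points, and since \emph{(upward)} and \emph{(downward)} show that both conditions on the right of (1) are equivalent to ``$(x_1, bx_2)$ is Daugavet'', the ``if'' direction of (1) is immediate; similarly the ``if'' direction of (2) is immediate from \emph{(upward)} when $x_1$ or $x_2$ is Daugavet. For the ``only if'' direction of (1): as $b < 1$, the point $(x_1, bx_2)$ is not extreme in $B_X = B_{X_1} \times B_{X_2}$ (perturb the second coordinate, using $X_2 \neq \{0\}$), hence not strongly exposed; so if it is a $\nabla$ point then, by the dichotomy of \cite[Theorem 2.3]{HLPV} in the real case and by Proposition \ref{prop-nablas-complex} in the complex case, it is Daugavet, and then $x_1$ is Daugavet by \emph{(downward)}.

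The one case needing a genuinely new argument is the ``if'' direction of (2) with $x_1$ and $x_2$ \emph{both} $\nabla$. Take a slice $S(g, \varepsilon)$ with $(x_1, x_2) \notin S(g, \varepsilon)$, i.e.\ $\re g_1(x_1) + \re g_2(x_2) \le 1 - \varepsilon$ (and, as usual, we may assume $\varepsilon$ small). If $g_1 = 0$ (resp.\ $g_2 = 0$), put $y_1 = -x_1$ (resp.\ $y_2 = -x_2$) and anything admissible in the other coordinate. Otherwise set $a_i = \|g_i\| > 0$, $h_i = g_i/a_i$; the exclusion reads $a_1(1 - \re h_1(x_1)) + a_2(1 - \re h_2(x_2)) \ge \varepsilon$, so since $a_1 + a_2 = 1$ at least one summand, say the first, is $\ge \tfrac{\varepsilon}{2}$, which says exactly $x_1 \notin S\bigl(h_1, \varepsilon/(2a_1)\bigr)$. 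Using that $x_1$ is $\nabla$, pick $y_1 \in S\bigl(h_1, \varepsilon/(2a_1)\bigr)$ with $\|x_1 - y_1\| > 2 - \varepsilon$ and pick $y_2 \in B_{X_2}$ with $\re h_2(y_2) > 1 - \varepsilon/(2a_2)$; then $a_1 \re h_1(y_1) + a_2 \re h_2(y_2) > (a_1 - \tfrac{\varepsilon}{2}) + (a_2 - \tfrac{\varepsilon}{2}) = 1 - \varepsilon$, so $(y_1, y_2) \in S(g, \varepsilon)$ and $\|(x_1, x_2) - (y_1, y_2)\| \ge \|x_1 - y_1\| > 2 - \varepsilon$. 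For the ``only if'' direction of (2) I would use the contrapositive: assume neither $x_1$ nor $x_2$ is Daugavet and, WLOG, $x_1$ is not $\nabla$; pick $h_1 \in S_{X_1^*}$, $\varepsilon_1, \eta_1 > 0$ with $x_1 \notin S(h_1, \varepsilon_1)$ and $\|x_1 - \cdot\| \le 2 - \eta_1$ on $S(h_1, \varepsilon_1)$, and (since $x_2$ is not Daugavet) $h_2 \in S_{X_2^*}$, $\varepsilon_2, \eta_2 > 0$ with $\|x_2 - \cdot\| \le 2 - \eta_2$ on $S(h_2, \varepsilon_2)$. Then $g := (\tfrac12 h_1, \tfrac12 h_2)$ and $\delta := \tfrac12 \min(\varepsilon_1, \varepsilon_2)$ do the job: $\re g_1(x_1) + \re g_2(x_2) \le \tfrac12(1 - \varepsilon_1) + \tfrac12 \le 1 - \delta$ so $(x_1, x_2) \notin S(g, \delta)$, while the driving observation gives $S(g, \delta) \subseteq S(h_1, 2\delta) \times S(h_2, 2\delta) \subseteq S(h_1, \varepsilon_1) \times S(h_2, \varepsilon_2)$, whence $\sup_{y \in S(g, \delta)} \|(x_1, x_2) - y\| \le \max(2 - \eta_1, 2 - \eta_2) < 2$.

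I expect the crux to be exactly the ``both $\nabla$'' case above: one cannot commit to a single coordinate in advance, since the excluding functional may be balanced between the two, so the working coordinate has to be chosen adaptively (whichever $x_i$ lies far enough from its $h_i$) and the slack $\varepsilon$ split evenly to close the numerics; the rest is bookkeeping. A small but necessary point is that a coordinate of norm $< 1$ kills extremality, which is what lets the $\nabla$-dichotomy collapse to ``Daugavet'' in (1), and the real/complex distinction enters only through the already available Proposition \ref{prop-nablas-complex} and \cite[Theorem 2.3]{HLPV}.
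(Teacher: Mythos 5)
The paper does not prove this lemma; it is quoted verbatim from \cite[Proposition 3.5]{HLPV}, so there is no in-paper argument to compare against. Your reconstruction is correct and is essentially the standard one: the identification $(X_1\oplus_\infty X_2)^*=X_1^*\oplus_1 X_2^*$ reduces every slice of the sum to a pair of slices of the summands with a shared budget $\varepsilon$, and the only two genuinely non-routine points --- choosing the working coordinate \emph{adaptively} in the ``both $\nabla$'' case (since $a_1(1-\re h_1(x_1))+a_2(1-\re h_2(x_2))\geq\varepsilon$ forces one summand to be at least $\varepsilon/2$), and invoking the Daugavet-or-strongly-exposed dichotomy together with the non-extremality of $(x_1,bx_2)$ for $b<1$ to collapse case (1) to the Daugavet situation --- are both handled correctly. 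The numerics all close (in particular $\varepsilon/(2a_1)\leq 2$ automatically, so the auxiliary slices are legitimate), and the contrapositive argument for the ``only if'' of (2) correctly combines the failure of $\nabla$ for one coordinate with the failure of Daugavet for the other.
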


As for the case of $\oplus_1$-sums of spaces, they can only differ from Daugavet points when one of the coordinates is $0$.

\begin{lemma}[{\cite[Propositions 3.3 and 3.4]{HLPV}}]\label{nabla-1-sums}
Let $X_1, X_2$ be Banach spaces, and for each $i\in\{1,2\}$ let $x_i\in S_{X_i}$ and $a_i\in (0,1]$ be such that $a_1+a_2=1$. The following holds.
\begin{enumerate}
\item $(x_1,0)\in S_{X_1\oplus_1 X_2}$ is $\nabla$ if and only if so is $x_1$.
\item $(a_1 x_1, a_2 x_2)\in S_{X_1\oplus_1 X_2}$ is $\nabla$ if and only if $x_1$, $x_2$, and $(a_1 x_1, a_2 x_2)$ are all Daugavet.
\end{enumerate}
\end{lemma}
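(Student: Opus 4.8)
The plan is to reduce each statement of Lemma~\ref{nabla-1-sums} to the known $\nabla$-point characterizations in Subsection~\ref{subsection:stability-nabla}, together with the elementary description of slices of $B_{X_1\oplus_1 X_2}$. Recall that the dual of $X_1\oplus_1 X_2$ is $X_1^*\oplus_\infty X_2^*$, so a norm-one functional has the form $(x_1^*,x_2^*)$ with $\max\{\|x_1^*\|,\|x_2^*\|\}=1$, and a slice $S((x_1^*,x_2^*),\eps)$ of $B_{X_1\oplus_1 X_2}$ consists of all $(y_1,y_2)$ with $\|y_1\|+\|y_2\|\le 1$ and $\re x_1^*(y_1)+\re x_2^*(y_2)>1-\eps$. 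The first step is to record these facts and observe that the supremum of $\re x_1^*(y_1)+\re x_2^*(y_2)$ over the unit ball is $\max\{\|x_1^*\|,\|x_2^*\|\}=1$.

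For part (1), I would argue as follows. Suppose $x_1\in S_{X_1}$ is a $\nabla$ point; I want $(x_1,0)$ to be a $\nabla$ point of $X_1\oplus_1 X_2$. Let $S=S((x_1^*,x_2^*),\eps)$ be a slice of $B_{X_1\oplus_1 X_2}$ not containing $(x_1,0)$; since $(x_1,0)\notin S$ means $\re x_1^*(x_1)\le 1-\eps$, in particular $\|x_1^*\|=1$ (if $\|x_1^*\|<1$ one checks $x_1^*$ cannot witness a slice missing $x_1$ unless... — more carefully: if $\|x_1^*\|<1$ then $\|x_2^*\|=1$, and the slice $S$ contains points $(0,y_2)$ with $y_2$ close to norming $x_2^*$; the distance from $(x_1,0)$ to such a point is $\|x_1\|+\|y_2\|$, which is close to $2$, so $\sup_{y\in S}\|(x_1,0)-y\|=2$ trivially). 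So assume $\|x_1^*\|=1$. Then $x_1^*/\|x_1^*\|=x_1^*$ defines a slice $S(x_1^*,\eps)$ of $B_{X_1}$, and it does not contain $x_1$ because $\re x_1^*(x_1)\le 1-\eps$. Since $x_1$ is $\nabla$, there is $y_1\in S(x_1^*,\eps)$ with $\|x_1-y_1\|$ close to $2$; then $(y_1,0)\in S$ and $\|(x_1,0)-(y_1,0)\|=\|x_1-y_1\|$ is close to $2$. Conversely, if $(x_1,0)$ is a $\nabla$ point of the sum, I push any slice of $B_{X_1}$ missing $x_1$ up to a slice of $B_{X_1\oplus_1 X_2}$ via the functional $(x_1^*,0)$ (note $\|(x_1^*,0)\|=\|x_1^*\|=1$), which still misses $(x_1,0)$; a point $(y_1,y_2)$ in the resulting slice far from $(x_1,0)$ has large $y_1$-component satisfying $\re x_1^*(y_1)>1-\eps-\|y_2\|$... — here I would actually use that $\nabla$ points are Daugavet or strongly exposed: if $(x_1,0)$ is Daugavet then $x_1$ is Daugavet (downward stability of Daugavet points under $\oplus_1$, which is well known and cited in the surrounding literature), hence $\nabla$; if $(x_1,0)$ is strongly exposed it is denting, and then $x_1$ is denting in $X_1$, but a denting point that is not Daugavet can still be $\nabla$ — so the cleaner route is the direct slice argument above, carefully extracting $y_1$ with $\|x_1-y_1\|\to 2$ from $\|(x_1,0)-(y_1,y_2)\|=\|x_1-y_1\|+\|y_2\|\to 2$ and $\|y_1\|+\|y_2\|\le 1$, which forces $\|y_2\|\to 0$ and $\|x_1-y_1\|\to 2$, while $\re x_1^*(y_1)>1-\eps-\|y_2\|$ keeps $y_1$ (rescaled to the unit ball, if needed) in a slightly larger slice of $B_{X_1}$ missing $x_1$.

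For part (2), the point is that $(a_1x_1,a_2x_2)$ with both $a_i>0$ can only be $\nabla$ if it is in fact Daugavet. I would argue: first, if $x_1$, $x_2$, and $(a_1x_1,a_2x_2)$ are all Daugavet, then $(a_1x_1,a_2x_2)$ is Daugavet, hence $\nabla$. For the converse, suppose $(a_1x_1,a_2x_2)$ is $\nabla$ but not Daugavet; by the complex/real adaptation of \cite[Theorem~2.3]{HLPV} it is strongly exposed in $X_1\oplus_1 X_2$, hence denting. I claim this is impossible when $a_1,a_2>0$: a denting point of $X_1\oplus_1 X_2$ of the form $(a_1x_1,a_2x_2)$ with both coordinates nonzero would force $x_1$ to be denting in $X_1$ and $x_2$ denting in $X_2$, but more to the point one can produce a denting point $y\ne (a_1x_1,a_2x_2)$ within distance $<2$ — e.g. perturb slightly in one coordinate, or use that the relevant slices split — contradicting \cite[Proposition~2.6]{HLPV} which says a $\nabla$ point is at distance $2$ from every distinct denting point. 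Actually the slickest version: being a $\nabla$ point that is strongly exposed and having nonzero coordinates in both summands, consider the denting point $(-a_1x_1,a_2x_2)$ obtained by reflecting the first coordinate (which is denting because the $\oplus_1$-sum structure is symmetric under sign flips of each coordinate, and strong exposedness is preserved); its distance to $(a_1x_1,a_2x_2)$ is $2a_1<2$, contradicting \cite[Proposition~2.6]{HLPV}. Then the requirement that each of $x_1$, $x_2$, $(a_1x_1,a_2x_2)$ be Daugavet follows from the downward stability of Daugavet points under $\oplus_1$-sums (the coordinate projections restricted appropriately, or directly from \cite[Lemma~2.5 / Proposition]{...} in the cited literature).

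The main obstacle I anticipate is the bookkeeping in part (1) when $\|x_1^*\|<1$ versus $\|x_1^*\|=1$ and, in both parts, the careful extraction of a point far from $(x_1,0)$ (resp. $(a_1x_1,a_2x_2)$) out of a point far in the sum: one must track that the $\oplus_1$-norm is additive on coordinates, so a pair at distance $\to 2$ with both coordinate-norms summing to $\le 1$ is heavily constrained, and translate this back into membership in a (slightly enlarged) slice of the component space — being attentive to the fact that a vector in $B_{X_1\oplus_1 X_2}$ need not have its first coordinate in $B_{X_1}$ up to normalization without losing a little in the slice parameter. Since the statement is quoted verbatim from \cite[Propositions~3.3 and 3.4]{HLPV} (only re-observing that the real proofs transfer to the complex setting, as noted at the start of Section~\ref{section:stability}), in the paper itself this "proof" is simply the citation together with the remark about complex scalars; the sketch above is how one would fill in the argument from scratch.
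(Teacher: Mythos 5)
The paper gives no argument for this lemma: it is quoted verbatim from \cite[Propositions 3.3 and 3.4]{HLPV} with only the blanket remark at the start of Section \ref{section:stability} that the real proofs transfer to the complex setting. So there is nothing internal to compare against, and your sketch has to be judged as an independent reconstruction. As such it is essentially sound. The forward direction of (1) (splitting into $\|x_1^*\|<1$, where $(0,y_2)$ with $\|y_2\|$ close to $1$ already realizes distance close to $2$, and $\|x_1^*\|=1$, where $\nabla$-ness of $x_1$ is invoked) is correct, and your ``slickest version'' of (2) is a genuinely clean argument: the sign-flip $(u,v)\mapsto(-u,v)$ is a surjective isometry, so $(-a_1x_1,a_2x_2)$ is a strongly exposed (hence denting) point of the ball at distance $2a_1<2$ from $(a_1x_1,a_2x_2)$, contradicting \cite[Proposition 2.6]{HLPV} if the latter were $\nabla$ but not Daugavet; the necessity of $x_1,x_2$ Daugavet then follows from Lemma \ref{delta-daug-1-sums}.

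The one step that is wrong as written is in the converse of (1): the implication ``$\|x_1-y_1\|+\|y_2\|\to 2$ and $\|y_1\|+\|y_2\|\le 1$ forces $\|y_2\|\to 0$'' is false. Take $y_1=-\tfrac12 x_1$ and any $y_2$ with $\|y_2\|=\tfrac12$; then $\|x_1-y_1\|+\|y_2\|=\tfrac32+\tfrac12=2$ while $\|y_2\|=\tfrac12$. What actually controls $\|y_2\|$ is the slice membership, not the distance: for the functional $(x_1^*,0)$ the slice condition on $(y_1,y_2)\in S\bigl((x_1^*,0),\delta\bigr)$ reads $\re x_1^*(y_1)>1-\delta$ (there is no $-\|y_2\|$ correction, contrary to the inequality you wrote), which forces $\|y_1\|>1-\delta$ and hence $\|y_2\|<\delta$; then $\|x_1-y_1\|>2-\delta-\|y_2\|>2-2\delta$. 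To conclude, given a fixed slice $S(x_1^*,\eps)$ of $B_{X_1}$ missing $x_1$, apply the $\nabla$ property of $(x_1,0)$ to the slices $S\bigl((x_1^*,0),\delta\bigr)$ for $\delta\le\eps$ tending to $0$ (each still misses $(x_1,0)$ since $\re x_1^*(x_1)\le 1-\eps$), and note that every $y_1$ so obtained lies in the original slice $S(x_1^*,\eps)$ with $\|x_1-y_1\|>2-2\delta$. With this repair, and no rescaling of $y_1$ needed since $\|y_1\|\le 1$ automatically, your argument goes through.
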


Note that these characterizations can be naturally extended to any finite sum of spaces.

\begin{remark}
Recall that in the complex case $\nabla$ and Daugavet points coincide (see Proposition \ref{prop-nablas-complex}). It is worth noting that in this scenario, the characterizations above coincide with the respective stability results for Daugavet points from the literature, as expected.
\end{remark}

\subsection{Daugavet and \texorpdfstring{$\boldsymbol{\Delta}$}{Delta} points}\label{subsection:stability-Daugavet-Delta}

The stability of Daugavet points under absolute sums has been characterized in \cite{HPV21}. In the case of $\oplus_\infty$-sums, the following has been shown for the sum of two spaces and can be easily generalized to any finite sum. 

\begin{lemma}[{\cite[Proposition 2.4 and Theorem 3.2]{HPV21}}]\label{daugavet-infty-sums}
Let $\{X_k\}_{k=1}^n$ be a finite collection of Banach spaces. For each $1\leq k\leq n$, let $x_k\in S_{X_k}$ and $a_k\geq 0$ be such that $(a_1,\ldots,a_n)\in S_{\ell_\infty^n}$. Then $(a_1 x_1,\ldots,a_n x_n)\in S_{X_1\oplus_\infty\ldots\oplus_\infty X_n}$ is Daugavet if and only if there is some $k\in\{1,\ldots,n\}$ such that $a_k=1$ and $x_k$ is Daugavet.
\end{lemma}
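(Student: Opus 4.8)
The plan is to reduce the statement about $\oplus_\infty$-sums to the case of two summands and then characterize Daugavet points of $X \oplus_\infty Y$ directly via the $\overline{\operatorname{co}(\Delta_\varepsilon)}$ criterion recorded in the Preliminaries. For the reduction, write $X_1 \oplus_\infty \cdots \oplus_\infty X_n = X_1 \oplus_\infty (X_2 \oplus_\infty \cdots \oplus_\infty X_n)$ and note that a point $(a_1 x_1, \ldots, a_n x_n)$ of norm $1$ either has $a_1 = 1$, in which case $(a_2 x_2, \ldots, a_n x_n)$ has norm $\le 1$, or $\max_{k\ge 2} a_k = 1$; in both cases we are in position to apply the two-summand result to $X_1 \oplus_\infty \big(\bigoplus_{k\ge 2}^\infty X_k\big)$, and an easy induction on $n$ then yields the general statement (the inductive hypothesis says the Daugavet points of $\bigoplus_{k\ge 2}^\infty X_k$ are exactly those with some coordinate of norm $1$ that is Daugavet in its space).

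For the two-summand case, $(ax, by) \in S_{X \oplus_\infty Y}$ with $\max\{a,b\} = 1$, say $a = 1$. First I would prove the ``if'' direction: suppose $x$ is Daugavet in $X$. Fix $\varepsilon > 0$ and an arbitrary $(u,v) \in B_{X \oplus_\infty Y}$; I must show $(u,v) \in \overline{\operatorname{co}(\Delta_\varepsilon(x, by))}$. Since $x$ is Daugavet, $u \in \overline{\operatorname{co}(\Delta_\varepsilon(x))}$, so $u$ is approximated by convex combinations $\sum_i \lambda_i u_i$ with $\|x - u_i\| \ge 2 - \varepsilon$ and $\|u_i\| \le 1$. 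Then each $(u_i, v) \in B_{X\oplus_\infty Y}$ satisfies $\|(x, by) - (u_i, v)\| \ge \|x - u_i\| \ge 2 - \varepsilon$, so $(u_i, v) \in \Delta_\varepsilon(x, by)$, and $\sum_i \lambda_i (u_i, v) = (\sum_i \lambda_i u_i, v)$ approximates $(u,v)$. Hence $(u,v) \in \overline{\operatorname{co}(\Delta_\varepsilon(x, by))}$, proving $(x, by)$ is Daugavet.

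The ``only if'' direction is the main obstacle and requires a bit more care. Assume $(ax, by)$ is Daugavet but neither coordinate is a norm-$1$ Daugavet point; I want a contradiction. If $a < 1$ then $b = 1$; by symmetry it suffices to rule out the case where for each coordinate of norm $1$, that coordinate fails to be Daugavet — concretely, suppose $a = 1$ but $x$ is not Daugavet in $X$ (the case $b<1$ being then automatic, and if $b = 1$ as well one assumes both fail). Since $x$ is not Daugavet, there exist $\varepsilon > 0$ and $z \in B_X$ with $z \notin \overline{\operatorname{co}(\Delta_\varepsilon(x))}$; separate by Hahn–Banach to get $x^* \in S_{X^*}$ and $\alpha \in \mathbb{R}$ with $\operatorname{Re} x^*(w) < \alpha < \operatorname{Re} x^*(z)$ for all $w \in \Delta_\varepsilon(x)$, which gives a slice $S(x^*, \delta) \subset B_X$ around $z$ avoiding $\Delta_\varepsilon(x)$, i.e.\ on which $\|x - \cdot\| < 2 - \varepsilon$. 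Lift $x^*$ to the functional $(w,t) \mapsto x^*(w)$ on $X \oplus_\infty Y$; the corresponding slice $S$ of $B_{X\oplus_\infty Y}$ contains $(z, v)$ for any $v \in B_Y$ and consists of pairs $(w, t)$ with $w \in S(x^*, \delta)$, $t \in B_Y$. For such $(w,t)$ one has $\|(ax, by) - (w,t)\| = \max\{\|ax - w\|, \|by - t\|\}$; the first term is $< \max\{a \cdot 2 - \varepsilon, \ldots\}$ — more precisely $\|ax - w\| \le \|x - w\| + (1-a) < 2 - \varepsilon + (1-a)$, wait, I should instead estimate $\|ax-w\|\le a\|x-w\|+(1-a)\|w\| < a(2-\varepsilon)+(1-a) = 2 - a\varepsilon \le 2 - \ldots$; hmm when $a=1$ this is $2-\varepsilon$, and the second term $\|by - t\| \le b + 1 \le 2$ is the real issue. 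To handle it, one further shrinks: if $b < 1$ then $\|by-t\| \le b+1 < 2$ and we are done with $\operatorname{diam} S < 2$, contradicting that $(ax,by)$ is Daugavet. If $b = 1$, then by hypothesis $y$ is also not Daugavet in $Y$, so repeat the construction to get a functional $y^* \in S_{Y^*}$ and a slice of $B_Y$ around some point on which $\|y - \cdot\| < 2 - \varepsilon'$; intersect with the slice given by $(w,t)\mapsto \tfrac12(x^*(w) + y^*(t))$ — or rather, use that a small enough slice for this averaged functional is contained in the product of the two component slices — to obtain a slice of $B_{X\oplus_\infty Y}$ of diameter $< 2$, again contradicting the Daugavet property of $(ax, by)$. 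This shows some coordinate of norm $1$ must be Daugavet, completing the proof.
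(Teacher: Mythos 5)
Your proposal is correct. Note that the paper gives no proof of this lemma at all: it is quoted from \cite[Proposition 2.4 and Theorem 3.2]{HPV21} for two summands, with the finite-sum extension asserted to be routine, so there is no ``paper proof'' to compare against line by line. Your argument is a sound, self-contained reconstruction. The sufficiency direction via the characterization ``$x$ is Daugavet iff $B_X=\overline{\operatorname{co}}(\Delta_\varepsilon(x))$ for all $\varepsilon>0$'' works exactly as you describe: padding each $u_i$ with the fixed second coordinate $v$ preserves both membership in the ball and the distance estimate under the max-norm. The necessity direction is also fine: Hahn--Banach separation of a point outside $\overline{\operatorname{co}}(\Delta_\varepsilon(x))$ yields a slice of $B_X$ on which $\|x-\cdot\|<2-\varepsilon$, and lifting via $(x^*,0)$ (when $b<1$, where $\|by-t\|\le 1+b<2$ kills the second coordinate) or via the averaged functional $\bigl(\tfrac12 x^*,\tfrac12 y^*\bigr)$ (when $b=1$ and both coordinates fail to be Daugavet, using the standard fact that a small slice for the averaged functional sits inside the product of the two component slices) produces a slice of $B_{X\oplus_\infty Y}$ whose points all lie at distance strictly less than $2$ from $(ax,by)$, contradicting the Daugavet property of that point. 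The induction on $n$ via $X_1\oplus_\infty\bigl(\bigoplus_{k\ge2}X_k\bigr)$ is the natural reduction and is what the paper implicitly has in mind. One cosmetic remark: the mid-proof digression estimating $\|ax-w\|$ for $a<1$ is unnecessary in the case you actually set up (where $a=1$), and the correct value of $a(2-\varepsilon)+(1-a)$ is $1+a-a\varepsilon$, which equals $2-a\varepsilon$ only when $a=1$; this does not affect the argument since either quantity is $<2$.
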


For $\Delta$ points the situation is more complicated, as there are Banach spaces $X,Y$ and points $x\in S_X$ and $y\in S_Y$ such that $(x,y)$ is $\Delta$ in $S_{X\oplus_\infty Y}$, but none of $x$ or $y$ are $\Delta$ (see \cite[Section 4]{HPV21}). Nevertheless, a characterization of the $\Delta$ points in $X\oplus_\infty Y$ can be achieved by combining \cite[Propositions 4.4 and 4.5]{HPV21} with \cite[Claim inside the proof of Theorem 5.8]{AHLP} and studying the final missing case (where only one of the coordinates is $\Delta$, which can be tackled analogously to the last cited result). To extend the result to finite sums, we make a refinement of the computations done in those results. We include the details.

Recall (\cite[Definition 4.2]{HPV21}) that $x\in S_X$ is a \textit{$\Delta_p$ point} with $p>1$ if for every slice $S(B_X, x^*, \alpha)$ containing $x$ and every $\varepsilon>0$, there is $u\in S(B_X, x^*, p\alpha)$ such that $\|x-u\|\geq 2-\varepsilon$. Note that every $\Delta$ point is $\Delta_p$ for all $p>1$, and that every $\Delta_p$ point is $\Delta_r$ for all $1<p\leq r$.

\begin{lemma}\label{delta-infty-sums}
Let $n>1$ be an integer, let $X_1,\ldots,X_n$ be Banach spaces, and for each $1\leq k\leq n$, let $x_k\in S_{X_k}$. Let $(a_1,\ldots,a_n)\in \ell_\infty^n$ be such that $a_k\geq 0$ for all $1\leq k\leq n$. Then $(a_1 x_1,\ldots,a_n x_n)\in S_{X_1\oplus_\infty \ldots\oplus_\infty X_n}$ is $\Delta$ if and only if either of the following conditions holds:
\begin{enumerate}
\item There is $1\leq k\leq n$ with $a_k=1$ such that $x_k$ is $\Delta$.
\item If $A:=\{k\in\{1,\ldots, n\}:\, a_k=1\}$, then for every $\{p_k\}_{k\in A}$ with $p_k>1$ for each $k\in A$ and $\sum_{k\in A}\frac{1}{p_k}=1$, there is some $k\in A$ such that $x_k$ is $\Delta_{p_k}$.
\end{enumerate}
In particular, if $A$ only has one element, say $k$,  then $(a_1 x_1,\ldots,a_n x_n)$ is $\Delta$ if and only if $x_k$ is $\Delta$.
\end{lemma}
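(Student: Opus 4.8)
The plan is to reduce the $\oplus_\infty$-sum statement to the case of two summands, where the relevant computations from \cite{HPV21,AHLP} already live, and then use an inductive grouping argument for the general case. First I would dispose of the easy implications. If condition (1) holds, say $a_k=1$ and $x_k$ is $\Delta$, then given any slice $S\big((a_1x_1,\ldots,a_nx_n)\big)$ of $B_{X_1\oplus_\infty\cdots\oplus_\infty X_n}$ containing our point, I would use the dual description $(X_1\oplus_\infty\cdots\oplus_\infty X_n)^*=X_1^*\oplus_1\cdots\oplus_1 X_n^*$ to extract from the defining functional a functional on $X_k$; since $x_k$ sits on the unit sphere of the $k$-th coordinate, the slice "restricts" to a slice of $B_{X_k}$ containing $x_k$, and perturbing in that coordinate alone (keeping the others fixed, using $\|x-u\|_\infty\geq\|x_k-u_k\|_{X_k}$) produces a point of the big slice at distance $\geq 2-\eps$. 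The $\Delta_{p_k}$ bookkeeping in condition (2) is needed precisely because when one only knows $\Delta_{p_k}$-type information, a slice of the sum of width $\alpha$ need not restrict to a slice of width $\alpha$ in each coordinate; rather, if the functional splits as $\sum_k \lambda_k y_k^*$ with $\sum\lambda_k=1$, then it restricts to a slice of width roughly $\alpha/\lambda_k$ in coordinate $k$, and one then sets $p_k=1/\lambda_k$ (for $k\in A$) to invoke $\Delta_{p_k}$. So the forward direction of (2) amounts to: for a fixed slice of the sum, read off the weights $\lambda_k$ for $k\in A$, observe $\sum_{k\in A}1/p_k=\sum_{k\in A}\lambda_k\le 1$, and shrink the $p_k$'s slightly if necessary so the sum is exactly $1$ while staying $\Delta_{p_k}$ (monotonicity in $p$), then pick the $k\in A$ guaranteed by the hypothesis and perturb only in that coordinate.

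For the converse (that $\Delta$ of the sum forces (1) or (2)), I would argue contrapositively: suppose neither (1) nor (2) holds. Failure of (1) means no $x_k$ with $a_k=1$ is $\Delta$, so for each such $k$ there is a slice $S(B_{X_k},y_k^*,\alpha_k)$ containing $x_k$ with $\diam<2-\eps_k$ for some $\eps_k>0$; for $k\notin A$ we have $a_k<1$, so the "bad" behaviour is controlled by the gap $1-a_k$. Failure of (2) produces a specific tuple $\{p_k\}_{k\in A}$ with $\sum 1/p_k=1$ such that no $x_k$ ($k\in A$) is $\Delta_{p_k}$, i.e. each such $x_k$ admits a slice of width $\alpha_k$ containing it inside which every point $u$ with $\|x_k-u\|\geq 2-\eps_k$ nonetheless fails to lie in the widened slice of width $p_k\alpha_k$. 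I would then assemble the functional $\Phi:=\sum_{k\in A}\frac{1}{p_k}\,(0,\ldots,y_k^*,\ldots,0)$ (norm one in the $\oplus_1$ dual, and attaining a value at our point that is within a controlled amount of its supremum because each $y_k^*$ nearly attains at $x_k$); choosing the slice width carefully, I claim $S\big((a_1x_1,\ldots,a_nx_n)\big)$ cut by $\Phi$ with an appropriate $\alpha$ has diameter bounded away from $2$, contradicting that the point is $\Delta$. The core estimate is: if $u=(u_1,\ldots,u_n)$ lies in this slice, then $\re\,y_k^*(u_k)$ is forced to be large (of order $1-p_k\alpha$) for every $k\in A$, which via the non-$\Delta_{p_k}$ property of $x_k$ forces $\|x_k-u_k\|<2-\eps_k$, and for $k\notin A$ one has $\|a_kx_k-u_k\|\le a_k+1<2-(1-a_k)$; taking the max over $k$ shows $\|(a_1x_1,\ldots)-u\|_\infty\le 2-\delta$ for a uniform $\delta>0$.

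The general-$n$ case is then handled by induction via an associativity/grouping trick rather than redoing the estimates from scratch. Writing $X_1\oplus_\infty\cdots\oplus_\infty X_n=(X_1\oplus_\infty X_2)\oplus_\infty X_3\oplus_\infty\cdots$, or more flexibly grouping $A$ and its complement, I would apply the already-established two-summand version (which is \cite[Propositions 4.4, 4.5]{HPV21} together with the "Claim inside the proof of Theorem 5.8" of \cite{AHLP} and the one remaining single-coordinate-$\Delta$ case) and push the conditions through the grouping. The bookkeeping to watch is that the partition $\{1/p_k\}_{k\in A}$ of $1$ must be compatible with whatever grouping one chooses — if one groups a subset $B\subsetneq A$, the induced weight on the group $X_B:=\bigoplus_{k\in B}X_k$ is $\sum_{k\in B}1/p_k$, and one needs the point $\big(\{p_k/(\sum_{j\in B}1/p_j)\}\big)$-normalized tuple inside $X_B$ to carry the $\Delta_{q}$ property with the correspondingly rescaled exponents. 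The final "in particular" clause ($|A|=1$) is immediate: then the only admissible $\{p_k\}_{k\in A}$ is $p_k=1$, and a $\Delta_1$ point is literally a $\Delta$ point, so (1) and (2) collapse to the same condition. I expect the main obstacle to be neither implication individually but the care needed in stating and chaining the $\Delta_{p_k}$-with-rescaled-exponents invariants through the induction so that the constants (the $\eps_k$, the slice widths, the final $\delta$) remain uniform and genuinely bounded away from the degenerate values; this is exactly why the authors say they "make a refinement of the computations" — the honest work is in tracking these quantities, not in any single clever idea.
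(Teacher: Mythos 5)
Your overall architecture matches the paper's: reduce to the case where all $a_k=1$ (the paper does this directly with the convex-combination characterization of $\Delta$ points from \cite[Lemma 4.1]{AHLP} rather than by induction on groupings, which lets it avoid the ``rescaled exponents through the induction'' bookkeeping you worry about at the end), prove necessity of (2) by assembling the witnessing functionals into a single functional in the $\oplus_1$-dual, and prove sufficiency by restricting slices coordinatewise. Two of your steps are essentially the paper's, modulo constants: for the necessity direction your weights $1/p_k$ should be replaced by weights proportional to $\prod_{j\neq k}p_j\alpha_j$ (so that $\lambda_k p_k\alpha_k$ is constant in $k$; with equal weights $1/p_k$ the computation only closes when all the $\alpha_k$ coincide), but this is a repairable computation, not a gap.

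The genuine gap is in your sufficiency argument for condition (2). You propose to read off the tuple $\{p_k\}$ from the slice (via $p_k=1/\lambda_k$, where $\lambda_k=\|y_k^*\|$), invoke (2) to get \emph{some} $k\in A$ with $x_k$ a $\Delta_{p_k}$ point, and perturb in that coordinate. But the coordinate that (2) hands you need not be usable: writing $\gamma_j:=\|y_j^*\|-\re y_j^*(x_j)\geq 0$ with $\sum_j\gamma_j<\alpha$, perturbing in coordinate $k$ with exponent $q_k$ keeps you inside the big slice only when roughly $q_k\gamma_k<\alpha-\sum_{j\neq k}\gamma_j$, and for a fixed admissible tuple this can fail simultaneously at \emph{every} coordinate (e.g.\ $n=2$, $\gamma_1=\gamma_2=0.49\alpha$ forces $q_1,q_2<1+0.02\alpha/0.49\alpha\approx 1.04$, incompatible with $1/q_1+1/q_2=1$). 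So ``pick the $k$ guaranteed by the hypothesis'' does not work; you need the favorable $k$ (the one the pigeonhole $\sum_j\gamma_j<\sum_j\alpha/p_j$ produces) to be $\Delta_{p_k}$, and (2) does not guarantee that for the particular tuple you chose. The paper's proof inserts a ``Step 1'' precisely to close this: from (2) it extracts a nonempty $B\subset A$ and a single tuple $\{b_k\}_{k\in B}$ with $\sum_{k\in B}1/b_k\geq 1$ such that \emph{every} $x_k$, $k\in B$, is $\Delta_{b_k}$ (showing that the infimum $b_k$ of admissible exponents is attained, and that $\sum 1/b_k<1$ would contradict (2) by perturbing the exponents); only then does the pigeonhole argument go through, since whichever coordinate it selects is known to be $\Delta_{b_k}$. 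This extraction is the one idea your proposal is missing. A smaller sign error: your ``shrink the $p_k$'s slightly \ldots while staying $\Delta_{p_k}$ (monotonicity in $p$)'' goes the wrong way, since $\Delta_p\Rightarrow\Delta_r$ only for $r\geq p$, so decreasing $p_k$ to force $\sum 1/p_k=1$ strengthens, rather than preserves, the hypothesis you need.
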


\begin{proof}
Write for simpliticy $A=\{k_1,\ldots,k_m\}$, $C:=\{1,\ldots,n\}$, and $D:=C\setminus A=\{k_{m+1},\ldots,k_{n}\}$ if $m<n$. Note that if $D\neq\emptyset$, then we can split $(a_1 x_1,\ldots,a_n x_n)$ as
$$((a_{k_1}x_{k_1},\ldots,a_{k_m}x_{k_m}),(a_{k_{m+1}}x_{k_{m+1}},\ldots,a_{k_{n}}x_{k_{n}}))\in \left[\bigoplus_{r=1}^m X_{k_r}\right]_\infty \oplus_\infty \left[\bigoplus_{r=m+1}^n X_{k_r}\right]_\infty.$$
Thus, if $(a_1 x_1,\ldots,a_n x_n)$ is $\Delta$, then so is $(a_{k_1}x_{k_1},\ldots,a_{k_m}x_{k_m})\in \left[\bigoplus_{r=1}^m X_{k_r}\right]_\infty$ by \cite[Theorem 4.1]{HLPV}. 

The converse is also true, and in fact we show more by arguing analogously to the Claim inside \cite[Theorem 5.8]{AHLP}. Indeed, let $\varepsilon>0$ be given, and without loss of generality assume that $1\in A$, and suppose that $x_1$ is $\Delta$ (the same argument can be applied to any other coordinate). Then by \cite[Lemma 4.1]{AHLP} we can find some $z_1,\ldots,z_l\in S_{X_1}$ such that
$$\left\| x_1 - \frac{1}{l}\sum_{j=1}^l z_j\right\| < \varepsilon\quad \text{and}\quad \|x_1-z_j\|\geq 2-\varepsilon\text{ for all }1\leq j\leq l.$$
Now, on the one hand, we have
$$\left\| (x_1,a_2 x_2,\ldots,a_n x_n) - \frac{1}{l}\sum_{j=1}^l (z_j,a_2 x_2,\ldots,a_n x_n)\right\| = \left\| x_1 - \frac{1}{l}\sum_{j=1}^l z_j\right\| < \varepsilon,$$
and on the other hand, for each $1\leq j\leq l$, we have
$$\|(x_1,a_2 x_2,\ldots,a_n x_n)-(z_j,a_2 x_2,\ldots,a_n x_n)\|=\|x_1-z_j\|\geq 2-\varepsilon.$$
This shows that $(a_1 x_1, \ldots, a_n x_n)$ is a $\Delta$ point in this case.

Therefore, to prove the rest of the result, it suffices only to consider (for the sake of notational simplicity) the case where $a_1=\ldots=a_n=1$, that is, $A=\{1,\ldots,n\}$, since any other case can be reduced to this one by the previous argument, we will assume this from now on. 

We have seen that \textup{(1)} implies that $(x_1,\ldots,x_n)$ is $\Delta$. Suppose now that $(x_1,\ldots,x_n)$ is $\Delta$ but \textup{(1)} does not hold, and we will see by contradiction that then \textup{(2)} holds. Indeed, assume that there are $\{p_k\}_{k\in A}$ with $p_k>1$ for all $k\in A$ and $\sum_{k\in A}\frac{1}{p_k}=1$ such that for all $k\in A$, $x_k$ is not $\Delta_{p_k}$. Then there is $\varepsilon>0$, and for each $k\in A$ there are $x_k^*\in S_{X_k^*}$ and $\alpha_k>0$ such that $x_k\in S(B_{X_k}, x_k^*, \alpha_k)$, but for all $u_k\in S(B_{X_k}, x_k^*, p_k \alpha_k)$, we have $\|x_k-u_k\|<2-\varepsilon$. For each $k\in A$, define 
$$\lambda_k:=\frac{\prod\limits_{j\in A\setminus\{k\}} p_j \alpha_j}{\sum\limits_{i\in A} \prod\limits_{j\in A\setminus\{i\}} p_j \alpha_j}.$$
Let $\alpha=\sum_{k\in A} \lambda_k \alpha_k$ and $f^*:=(\lambda_1 x_1^*,\ldots,\lambda_n x_n^*)\in S_{X_1^*\oplus_1\ldots \oplus_1 X_n^*}$, and note that
$$\re(f^*((x_1,\ldots,x_n)))=\lambda_1\re(x_1^*(x_1))+\ldots+\lambda_n\re(x_n^*(x_n))>\sum_{k=1}^n \lambda_k (1-\alpha_k)=1-\alpha.$$
Choose any $(u_1,\ldots,u_n)\in S(B_{X_1\oplus_\infty\ldots\oplus_\infty X_n}, f^*, \alpha)$. We have
$$\re(f^*((u_1,\ldots,u_n)))=\lambda_1\re(x_1^*(u_1))+\ldots+\lambda_n\re(x_n^*(u_n))>1-\alpha.$$
Fix any $1\leq k\leq n$. Since
$$1-\alpha < \lambda_k \re(x_k^*(u_k)) + \sum_{j\in A\setminus\{k\}} \lambda_j \re(x_j^*(u_j))\leq \lambda_k\re(x_k^*(u_k)) + \sum_{j\in A\setminus\{k\}} \lambda_j,$$
we get
\begin{align*}
\re(x_k^*(u_k))&>\frac{\lambda_k-\alpha}{\lambda_k}=1-\left( \alpha_k + \frac{\sum_{j\in A\setminus\{k\}} \alpha_j\lambda_j}{\lambda_k} \right) = 1-\left( \frac{p_k\alpha_k}{p_k} + \sum_{j\in A\setminus\{k\}} \alpha_j\frac{p_k \alpha_k}{p_j \alpha_j} \right) \\
&= 1-p_k \alpha_k.
\end{align*}
Thus, by the assumption, $\|x_k-u_k\|<2-\varepsilon$ for all $k\in A$, but this contradicts the fact that $(x_1,\ldots,x_n)$ is $\Delta$ and proves the claim.

Finally, it remains to show that if \textup{(2)} holds but \textup{(1)} does not, then $(x_1,\ldots,x_n)$ is $\Delta$. We will show this in 2 steps. 

\textit{Step 1:} First we will see that, under these assumptions, there is some $\emptyset\neq B\subset A$ and some $\{p_k\}_{k\in B}$ with $p_k>1$ for all $k\in B$ and $\sum_{k\in B}\frac{1}{p_k}=1$ such that for all $k\in B$ $x_k$ is a $\Delta_{p_k}$ point. For each $k\in A$, let $B_k:=\{j\in [1,+\infty):\, x_k\text{ is }\Delta_j\text{ in }X_k\}$, and write $b_k:=\inf\{j:\, j\in B_k\}$. 

First, we show that if for some $k\in A$ we have $B_k\neq\emptyset$, then $b_k\in B_k$. Indeed, fix $\varepsilon>0$, $x_k^*\in S_{X_k^*}$, and $\alpha_k>0$ such that $x_k\in S(B_{X_k}, x_k^*, \alpha_k)$. Let $\gamma_k>0$ be such that $\re(x_k^*(x_k))>1-(\alpha_k - \gamma_k)$, and let $j_k=\frac{b_k \alpha_k}{\alpha_k-\gamma_k}>b_k$, so $j_k\in B_k$. Thus, there is $u_k\in S(B_{X_k}, x_k^*, j_k (\alpha_k - \gamma_k))=S(B_{X_k}, x_k^*,b_k \alpha_k)$ such that $\|u_k-x_k\|\geq 2-\varepsilon$. But this means that $x$ is $\Delta_{b_k}$, that is, $b_k\in B_k$. 

Now, notice that in order to satisfy \textup{(2)}, at least one of the sets $B_k$ ($1\leq k\leq n$) must be non-empty, and if \textup{(1)} does not hold, in particular, at least 2 of such sets must be nonempty. Let $B:=\{k\in A:\, B_k\neq\emptyset\}$. We will see that $\sum_{k\in B}\frac{1}{b_k}\geq 1$. Indeed, otherwise there would exist $\delta>0$, and if $B\neq A$ there would also exist some $\{c_r\}_{r\in A\setminus B}$ with $c_r>1$ for each $r\in A\setminus B$, such that
$$\sum_{r\in B} \frac{1}{b_r-\delta} + \sum_{s\in A\setminus B}\frac{1}{c_s}=1.$$
But by assumption, this would imply that there is some $k\in B$ such that $x_k$ is $\Delta_{b_k-\delta}$. This contradicts the definition of $b_k$ and finishes this claim.

\textit{Step 2:} Finally, we will show that if such $B$ and $\{p_k\}_{k\in B}$ exist, then $(x_1,\ldots,x_n)$ is $\Delta$. Once more, to simplify notation, by the first part of the proof, it suffices to show this for the case where $B=A$. Fix $\varepsilon>0$, $f^*=(x_1^*, .\ldots, x_n^*)\in S_{X_1^*\oplus_1\ldots\oplus_1 X_n^*}$, and $\alpha>0$ such that $(x_1,\ldots,x_n)\in S(B_{X_1\oplus_\infty \ldots\oplus_\infty X_n}, f^*, \alpha)$. Then,
$$(\re(x_1^*(x_1))-\|x_1^*\|)+\ldots+(\re(x_n^*(x_n))-\|x_n^*\|)>-\frac{\alpha}{p_1}-\ldots-\frac{\alpha}{p_n}.$$
Note that there is some $k\in A$ such that $\re(x_k^*(x_k))>\|x_k^*\|-\frac{\alpha}{p_k}$, since otherwise we would have that $\re(f^*((x_1,\ldots,x_n)))\leq 1-\alpha$, which is a contradiction. Let such $k$ be given. Then there is some $u_k\in S\left(B_{X_k}, \frac{x_k^*}{\|x_k^*\|_{X_k^*}}, \frac{\alpha}{\|x_k^*\|_{X_k^*}}\right)$ such that $\|x_k-u_k\|\geq 2-\varepsilon$. Fix $\eta>0$ small enough so that $\re(x_k^*(u_k))>\|x_k^*\|-\alpha + \eta$, and for  $j\in A\setminus\{k\}$, find any $u_j\in S_{X_j}$ such that $\re(x_j^*(u_j))>\|x_j^*\|-\frac{\eta}{n}$. Notice that
$$\re(f^*((u_1,\ldots,u_n)))>1-\alpha,$$
and clearly
$$\|(u_1,\ldots,u_n)-(x_1,\ldots,x_n)\|\geq \|u_k-x_k\|\geq 2-\varepsilon.$$
This shows that $(x_1,\ldots,x_n)$ is $\Delta$, and the proof is finished.
\end{proof}

In the case of $\oplus_1$-sums, $\Delta$ and Daugavet points were also characterized in \cite{AHLP,HPV21} for the sum of two spaces, and the result can be easily extended once more to any finite sum of spaces.

\begin{lemma}[{\cite[Claim inside Theorem 5.8]{AHLP} and \cite[Theorems 2.2, 3.1, and 4.1]{HPV21}}]\label{delta-daug-1-sums}
Let $\{X_k\}_{k=1}^n$ be a finite collection of Banach spaces. For each $1\leq k\leq n$, let $x_k\in S_{X_k}$ and $a_k\geq 0$ be such that $(a_1,\ldots,a_n)\in S_{\ell_1^n}$. Then $(a_1 x_1,\ldots,a_n x_n)\in S_{X_1\oplus_\infty\ldots\oplus_\infty X_n}$ is $\Delta$ (respectively Daugavet) if and only if for every $k\in \{1,\ldots,n\}$ such that $a_k\neq 0$, the point $x_k\in S_{X_k}$ is $\Delta$ (respectively Daugavet).
\end{lemma}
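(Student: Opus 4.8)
The plan is to reduce to the two-space case and then iterate, since the cited results (\cite[Claim inside Theorem 5.8]{AHLP}, \cite[Theorems 2.2, 3.1, 4.1]{HPV21}) already handle $X_1\oplus_1 X_2$. First I would fix notation: given $(a_1,\dots,a_n)\in S_{\ell_1^n}$, reorder so that $a_1,\dots,a_m\neq 0$ and $a_{m+1}=\dots=a_n=0$; any coordinate with $a_k=0$ contributes nothing, since $(a_1x_1,\dots,a_nx_n)=((a_1x_1,\dots,a_mx_m),0)$ in $\left[\bigoplus_{k=1}^m X_k\right]_1\oplus_1\left[\bigoplus_{k=m+1}^n X_k\right]_\infty$, wait — I must be careful here, because the statement is literally about the $\oplus_\infty$-sum carrying the vector $(a_1x_1,\dots,a_nx_n)$ whose \emph{norm} is computed via $\ell_1^n$; the point is that $(a_1x_1,\dots,a_nx_n)$ has norm $\max_k a_k\|x_k\|$ in $X_1\oplus_\infty\cdots\oplus_\infty X_n$ only if we rescale, so actually the correct reading is that this is exactly the setup of Lemma \ref{delta-infty-sums} specialized to the case where the weight vector lies on $S_{\ell_1^n}$ rather than $S_{\ell_\infty^n}$. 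Concretely, observe that $\|(a_1x_1,\dots,a_nx_n)\|_\infty=\max_k a_k$, and since $(a_1,\dots,a_n)\in S_{\ell_1^n}$ with $n>1$, generically no $a_k$ equals $1$ unless exactly one $a_k=1$ and the rest vanish; so the set $A=\{k:\,a_k=1\}$ from Lemma \ref{delta-infty-sums} is empty in the interesting cases and the result should follow from a direct slice argument rather than from Lemma \ref{delta-infty-sums}.

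Given that subtlety, I would instead argue directly, mirroring the two-space proofs. For the "only if" direction: suppose $(a_1x_1,\dots,a_nx_n)$ is $\Delta$ (resp.\ Daugavet) and fix $k$ with $a_k\neq 0$. Given a slice $S(B_{X_k},x_k^*,\alpha)$ containing $x_k$ and $\varepsilon>0$, I would build the functional $f^*$ on $X_1\oplus_\infty\cdots\oplus_\infty X_n$ that is $x_k^*$ on the $k$-th coordinate (suitably weighted) and supporting functionals on the others, producing a slice of $B_{X_1\oplus_\infty\cdots\oplus_\infty X_n}$ that contains $(a_1x_1,\dots,a_nx_n)$ (resp.\ is an arbitrary slice, in the Daugavet case) and whose points, projected to the $k$-th coordinate, land in $S(B_{X_k},x_k^*,\alpha)$; then a point at distance $\geq 2-\varepsilon$ from $(a_1x_1,\dots,a_nx_n)$ forces its $k$-th coordinate to be at distance $\geq 2-\varepsilon$ from $x_k$ (here one uses that in an $\oplus_\infty$-sum the distance is the max of coordinate-wise distances, so some coordinate must realize it — and by adjusting the weights one can pin it to coordinate $k$, exactly as in \cite[Theorem 4.1]{HPV21}). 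This is the step I expect to be the main obstacle: ensuring that the "large distance" is attained specifically in the $k$-th coordinate rather than leaking into the others, which requires the same weight-balancing trick ($\lambda_j$'s chosen as in the proof of Lemma \ref{delta-infty-sums}) to make the slice thin in every coordinate except $k$.

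For the "if" direction: suppose each $x_k$ with $a_k\neq0$ is $\Delta$ (resp.\ Daugavet). Given a slice $S(B_{X_1\oplus_\infty\cdots\oplus_\infty X_n},f^*,\alpha)$ of the $\oplus_\infty$-sum with $f^*=(b_1y_1^*,\dots,b_ny_n^*)\in S_{X_1^*\oplus_1\cdots\oplus_1 X_n^*}$ containing $(a_1x_1,\dots,a_nx_n)$ (resp.\ an arbitrary slice), there must be some index $k$ with $b_k\neq0$ — and in the $\Delta$ case, by the containment condition $\re f^*(a_1x_1,\dots,a_nx_n)>1-\alpha$, one finds $k$ with $a_k\neq0$ and $\re b_ky_k^*(x_k)>\|b_ky_k^*\|-b_k\alpha$, so $x_k$ lies in a slice of $B_{X_k}$ of width $\sim\alpha$; apply the $\Delta$ (resp.\ Daugavet) property of $x_k$ to get $u_k$ at distance $\geq2-\varepsilon$ from $x_k$ inside that slice, then complete $(u_1,\dots,u_n)$ by choosing $u_j$ on the other coordinates with $\re y_j^*(u_j)$ close to $\|y_j^*\|$ so that $(u_1,\dots,u_n)$ stays in the original slice of the $\oplus_\infty$-sum, while $\|(u_1,\dots,u_n)-(a_1x_1,\dots,a_nx_n)\|\geq\|u_k-a_kx_k\|\geq\|u_k-x_k\|-(1-a_k)\geq2-\varepsilon-(1-a_k)$; since $a_k$ can be taken arbitrarily close to its value and $\varepsilon$ arbitrary, and in fact one wants $a_k$ not small, a slightly more careful bookkeeping (choosing which $k$ to use, or noting $(1-a_k)$ is absorbed because we can take $u_k$ with $\|u_k\|=1$ pointing "opposite" to $x_k$) gives distance $\geq2-\varepsilon$. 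The Daugavet case is identical except the slice is arbitrary and we do not need the containment condition. Finally, I would remark that since the two-space versions are already in the literature, the cleanest write-up is simply to say the $n$-fold statement follows by induction on $n$ using the associativity $X_1\oplus_1\cdots\oplus_1 X_n=(X_1\oplus_1\cdots\oplus_1 X_{n-1})\oplus_1 X_n$ together with the two-space results, checking only that the weight $(a_1,\dots,a_{n-1})/(a_1+\dots+a_{n-1})$ renormalization does not affect which coordinates are "active" — which is immediate.
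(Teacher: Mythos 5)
The bulk of your argument rests on a misreading of the statement. The ``$\oplus_\infty$'' in the displayed conclusion is a typo for ``$\oplus_1$'': the weights are normalized in $S_{\ell_1^n}$ precisely so that $\|(a_1x_1,\ldots,a_nx_n)\|=\sum_k a_k\|x_k\|=1$ in the $\ell_1$-sum, the surrounding text introduces the lemma as the $\oplus_1$-sum counterpart of Lemma \ref{delta-infty-sums}, and Lemma \ref{delta-daug-ell1-countable-sums} immediately afterwards extends exactly this statement to countable $\ell_1$-sums. Your attempt to read it literally --- in particular, to see it as Lemma \ref{delta-infty-sums} ``specialized to weights in $S_{\ell_1^n}$'' --- cannot be right: with more than one nonzero weight the vector would have norm $\max_k a_k<1$ and would not lie on the unit sphere of the $\infty$-sum at all. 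Because of this, your direct argument is written with $\ell_\infty$-sum logic throughout (``the distance is the max of coordinate-wise distances'', the weight-balancing trick that pins the large distance to a single coordinate), whereas in the $\ell_1$-sum distances add. This is fatal in your ``if'' direction: producing a far point in a single coordinate $k$ only gives $\|u_k-a_kx_k\|\leq\|u_k\|+a_k\leq 1+a_k<2$ whenever $a_k<1$, so the loss of $(1-a_k)$ you propose to ``absorb'' cannot be absorbed. The correct argument must use the hypothesis on \emph{all} active coordinates simultaneously: choose $u_j\in S_{X_j}$ with $\|x_j-u_j\|\geq 2-\varepsilon$ (inside the appropriate coordinate slice) for every $j$ with $a_j\neq 0$, and set $u=(a_1u_1,\ldots,a_nu_n)$, so that $\|x-u\|=\sum_j a_j\|x_j-u_j\|\geq 2-\varepsilon$. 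This is exactly why the $\oplus_1$ characterization demands that \emph{every} active coordinate be $\Delta$ (resp.\ Daugavet), in contrast with the $\oplus_\infty$-sum where a single coordinate suffices. The ``only if'' direction has the analogous defect: in the $1$-sum no single coordinate need realize the distance $2-\varepsilon$; one instead argues that $\sum_j\|a_jx_j-u_j\|\geq 2-\varepsilon$ together with $\sum_j(a_j+\|u_j\|)\leq 2$ forces each term to be within $\varepsilon$ of $a_j+\|u_j\|$, and then normalizes $u_k$.

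Your closing remark, however, is the correct proof and the one the paper intends: the two-space case is \cite[Theorems 2.2, 3.1 and 4.1]{HPV21} together with the Claim inside \cite[Theorem 5.8]{AHLP}, and the $n$-fold statement follows by induction using the isometry $X_1\oplus_1\cdots\oplus_1 X_n=(X_1\oplus_1\cdots\oplus_1 X_{n-1})\oplus_1 X_n$, writing $(a_1x_1,\ldots,a_nx_n)=(b\,y,\,a_nx_n)$ with $b=a_1+\cdots+a_{n-1}$ and $y=b^{-1}(a_1x_1,\ldots,a_{n-1}x_{n-1})\in S_{X_1\oplus_1\cdots\oplus_1X_{n-1}}$ when $b>0$; the renormalization does not change which coordinates are active, and the case $b=0$ or $a_n=0$ is covered by the $(x,0)$ clause of the two-space results. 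Had the proposal consisted of that paragraph alone, it would have been complete.
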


In fact, we show now that this result can be extended to countable $\ell_1$-sums of Banach spaces, as this fact will be needed in a later section.

\begin{lemma}\label{delta-daug-ell1-countable-sums}
Let $\{X_k\}_{k=1}^\infty$ be a countable collection of Banach spaces, and let $Z:=\left[ \bigoplus_{k=1}^\infty X_k\right]_{\ell_1}$. For each $k\in \bbn$, let $x_k\in S_{X_k}$ and $a_k\geq 0$ be such that $(a_1,a_2,\ldots)\in S_{\ell_1}$. Then $(a_1 x_1, a_2 x_2, \ldots)\in S_{Z}$ is $\Delta$ (respectively Daugavet) if and only if for every $k\in \bbn$ such that $a_k\neq 0$, the point $x_k\in S_{X_k}$ is $\Delta$ (respectively Daugavet).
\end{lemma}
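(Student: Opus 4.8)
The plan is to bootstrap from the finite case already established in Lemma \ref{delta-daug-1-sums} to the countable case by a truncation argument, the extra ingredient being that the class of $\Delta$ points (resp.\ Daugavet points) of $S_Z$ is closed in norm. Throughout I use the associativity of $\ell_1$-sums freely: for any $k$ one has the isometric identification $Z = X_k \oplus_1 W_k$ with $W_k := \left[\bigoplus_{j\neq k} X_j\right]_{\ell_1}$, and for any $n$ one has $Z = Y_n \oplus_1 W_n$ with $Y_n := \left[\bigoplus_{k=1}^n X_k\right]_{\ell_1}$ and $W_n := \left[\bigoplus_{k>n} X_k\right]_{\ell_1}$.

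For the \emph{downward} implication, fix $k$ with $a_k \neq 0$ and use the identification $Z = X_k \oplus_1 W_k$ above; under it the point $(a_1 x_1, a_2 x_2, \ldots)$ becomes $(a_k x_k, w_k)$ with $\|w_k\|_{W_k} = 1 - a_k$. Since $a_k \neq 0$, applying Lemma \ref{delta-daug-1-sums} to this two-term $\ell_1$-sum (with coefficients $(a_k, 1-a_k)$, or $(1,0)$ if $a_k = 1$) forces $x_k$ to be $\Delta$ (resp.\ Daugavet).

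For the \emph{upward} implication, assume every $x_k$ with $a_k \neq 0$ is $\Delta$ (resp.\ Daugavet). Put $b_n := \sum_{k=1}^n a_k$, which tends to $1$, and for $n$ large enough that $b_n > 0$ set $z^{(n)} := b_n^{-1}(a_1 x_1, \ldots, a_n x_n) \in S_{Y_n}$. Its coefficients $a_k/b_n$ are nonnegative and sum to $1$, and every $x_k$ with $k \leq n$ and $a_k \neq 0$ is $\Delta$ (resp.\ Daugavet), so Lemma \ref{delta-daug-1-sums} gives that $z^{(n)}$ is $\Delta$ (resp.\ Daugavet) in $Y_n$; regrouping $Z = Y_n \oplus_1 W_n$ and applying Lemma \ref{delta-daug-1-sums} once more (coefficients $(1,0)$), the point $y^{(n)} := (z^{(n)}, 0) \in S_Z$ is $\Delta$ (resp.\ Daugavet) in $Z$. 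A short computation gives $\|x - y^{(n)}\|_Z = 2\sum_{k>n} a_k$, which tends to $0$, so $y^{(n)} \to x$ in norm.

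It then remains to observe that the set of $\Delta$ points (resp.\ Daugavet points) of $S_Z$ is norm-closed. For $\Delta$ points: if $y_m \to x$ with each $y_m$ a $\Delta$ point, fix $\varepsilon > 0$ and choose $N$ with $\|x - y_m\| < \varepsilon/2$ for all $m \geq N$; then $\Delta_{\varepsilon/2}(y_m) \subseteq \Delta_\varepsilon(x)$, hence $y_m \in \overline{\operatorname{co}(\Delta_{\varepsilon/2}(y_m))} \subseteq \overline{\operatorname{co}(\Delta_\varepsilon(x))}$ for $m \geq N$, and since $\overline{\operatorname{co}(\Delta_\varepsilon(x))}$ is closed and contains a tail of $(y_m)$, it contains $x$; as $\varepsilon > 0$ was arbitrary, $x$ is $\Delta$. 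The Daugavet case is identical with $B_Z$ in place of $x$ in the above characterization. I expect this last closedness observation to be the only genuinely delicate point---especially for $\Delta$ points, where one must pass through the convex-hull characterization rather than argue slice by slice, since a norm limit of points lying in a fixed slice need not remain in that slice. Everything else is bookkeeping with the already-established finite-sum lemma.
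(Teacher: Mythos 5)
Your proof is correct and follows essentially the same route as the paper: both arguments truncate to a finite $\ell_1$-sum, invoke the finite-sum lemma, and then pass to the norm limit. The only difference is packaging — the paper carries out the limit step inline via \cite[Lemma 4.1]{AHLP} and the triangle inequality, whereas you isolate it as the (correct) observation that the sets of $\Delta$ and Daugavet points are norm-closed, proved through the same $\overline{\operatorname{co}(\Delta_\varepsilon(x))}$ characterization.
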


Note that, as usual, by the properties of absolute sums, it suffices to consider the case where $a_n\geq 0$ for all $n\in\bbn$.

\begin{proof}
Assume first that $(a_1 x_1, a_2 x_2, \ldots)$ is $\Delta$ (respectively Daugavet), and let $n\in\bbn$ be such that $a_n>0$. For simplicity, assume that $n=1$ (the same argument can be used for any other value of $n$). Then $(a_1 x_1, (a_2 x_2, a_3 x_3, \ldots))\in X_1\oplus_1 \left[ \bigoplus_{k=2}^\infty X_k\right]_{\ell_1}$ is also $\Delta$ (respectively Daugavet), and by Lemma \ref{delta-daug-1-sums}, so is $x_1\in X_1$.

For the converse, note that the case where $S=\{n\in \bbn:\, a_n\neq 0\}$ is a finite set is a consequence of applying Lemma \ref{delta-daug-1-sums}, so we assume now that $S$ is infinite. Moreover, the case where $\bbn\setminus S\neq \emptyset$ can be reduced to the case where $S=\bbn$ (as $x\in S_X$ is $\Delta$ or Daugavet if and only if so is $(x,0)\in X\oplus_1 Y$ for Banach spaces $X$ and $Y$). So assume without loss of generality that for all $n\in\bbn$, $a_n> 0$. Fix $\varepsilon>0$. Then there is $k\in\bbn$ such that 
$$\left\|(a_1 x_1, a_2 x_2, \ldots) - \frac{1}{\sum_{n=1}^k |a_n|}(a_1 x_1, \ldots, a_k x_k, 0, 0, \ldots) \right\|< \frac{\varepsilon}{2}.$$
For simplicity, we write  
$$x:=(a_1 x_1, a_2 x_2, \ldots)\quad \text{and}\quad y=(y_1,y_2,\ldots):=\frac{1}{\sum_{n=1}^k |a_n|}(a_1 x_1, \ldots, a_k x_k, 0, 0, \ldots).$$
We distinguish 2 cases now.

\textit{Case 1:} Suppose that for all $n\in\bbn$, $x_n\in S_{X_n}$ is $\Delta$. Then $y\in S_{Z}$ is also $\Delta$. By \cite[Lemma 4.1]{AHLP}, there exist $m\in\bbn$ and $y_1,\ldots,y_m\in S_{Z}$ such that
$$\left\| y - \frac{1}{m}\sum_{n=1}^m y_n\right\|<\frac{\varepsilon}{2},\quad \text{and}\quad \|y-y_n\|\geq 2-\frac{\varepsilon}{2} \text{ for }n=1,\ldots,k.$$
But then triangle inequality yields that
$$\left\| x - \frac{1}{m}\sum_{n=1}^m y_n\right\|<\varepsilon,\quad \text{and}\quad \|x-y_n\|\geq 2-\varepsilon \text{ for }n=1,\ldots,k.$$
This shows that $x$ is $\Delta$.

\textit{Case 2:} Suppose that for all $n\in\bbn$, $x_n\in S_{X_n}$ is Daugavet. Then $y\in S_{Z}$ is also Daugavet. Fix $z\in S_{Z}$. By \cite[Lemma 4.1]{AHLP}, there exist $m\in\bbn$ and $y_1,\ldots,y_m\in S_{Z}$ such that
$$\left\| z - \frac{1}{m}\sum_{n=1}^m y_n\right\|<\frac{\varepsilon}{2},\quad \text{and}\quad \|y-y_n\|\geq 2-\frac{\varepsilon}{2} \text{ for }n=1,\ldots,k.$$
But then the triangle inequality yields that
$$\|x-y_n\|\geq 2-\varepsilon \text{ for }n=1,\ldots,k.$$
Since this can be done for any arbitrary $z\in S_{Z}$, this shows that $x$ is Daugavet.
\end{proof}

\subsection{Super-Daugavet and super-\texorpdfstring{$\boldsymbol{\Delta}$}{Delta} points}\label{subsection:stability-super-Daugavet-Delta}

The stability of super-Daugavet and super-$\Delta$ points was first studied in \cite{MPZpre}. In the case of $\oplus_\infty$-sums, we have the following partial characterization.

\begin{lemma}\label{super-infty-sums}
Let $X,Y$ be Banach spaces, let $x\in S_X$, $y\in S_Y$, and $b\in [0,1]$. The following statements hold:
\begin{enumerate}
\item If $x$ is super-$\Delta$ (respectively super-Daugavet), then so is $(x,by)\in S_{X\oplus_\infty Y}$.
\item If $(x,by)\in S_{X\oplus_\infty Y}$ is super-$\Delta$ (respectively super-Daugavet) with $b<1$, then so is $x$.
\end{enumerate}
\end{lemma}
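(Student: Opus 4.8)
The plan is to use the net characterizations of super-$\Delta$ and super-Daugavet points recalled in the preliminary lemma, together with the natural projection and inclusion maps between $X\oplus_\infty Y$ and its summands, which are weak-weak continuous.

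\textbf{Proof of (1).} Suppose $x\in S_X$ is super-$\Delta$. By the characterization, there is a net $\{x_\lambda\}_{\lambda\in\Lambda}\subset B_X$ with $x_\lambda\xrightarrow{w}x$ and $\|x-x_\lambda\|\to 2$. Consider the net $\{(x_\lambda, by)\}_{\lambda\in\Lambda}\subset B_{X\oplus_\infty Y}$. Since the inclusion $X\hookrightarrow X\oplus_\infty Y$, $z\mapsto (z,by)$ is affine and weak-weak continuous (equivalently, each coordinate functional $(x^*,y^*)\mapsto x^*(x_\lambda)+y^*(by)$ converges), we get $(x_\lambda, by)\xrightarrow{w}(x,by)$. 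On the other hand $\|(x,by)-(x_\lambda,by)\|_\infty=\|x-x_\lambda\|_X\to 2$, so $(x,by)$ is super-$\Delta$. For the super-Daugavet case, let $(z_1,z_2)\in B_{X\oplus_\infty Y}$ be arbitrary; since $x$ is super-Daugavet there is a net $\{x_\lambda\}\subset B_X$ with $x_\lambda\xrightarrow{w}z_1$ and $\|x-x_\lambda\|\to 2$, and then $(x_\lambda, z_2)\xrightarrow{w}(z_1,z_2)$ with $\|(x,by)-(x_\lambda,z_2)\|_\infty\geq\|x-x_\lambda\|\to 2$, so $(x,by)$ is super-Daugavet.

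\textbf{Proof of (2).} Suppose $(x,by)\in S_{X\oplus_\infty Y}$ is super-$\Delta$ with $b<1$. Pick $\delta>0$ with $b+\delta<1$. By the characterization there is a net $\{(u_\lambda,v_\lambda)\}_{\lambda}\subset B_{X\oplus_\infty Y}$ with $(u_\lambda,v_\lambda)\xrightarrow{w}(x,by)$ and $\|(x,by)-(u_\lambda,v_\lambda)\|_\infty\to 2$. Applying the coordinate projection $P_Y\colon X\oplus_\infty Y\to Y$, which is linear, norm-one, and weak-weak continuous, we obtain $v_\lambda\xrightarrow{w}by$ in $Y$, hence $\|v_\lambda\|$ is eventually at most $b+\delta<1$ (more carefully: fix $y^*\in S_{Y^*}$ with $y^*(by)$ close to $\|by\|=b$; then $\re y^*(v_\lambda)\to b$, but this only controls one direction — instead use that $\|v_\lambda\|\le 1$ always, which already suffices). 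Since $b<1$, for the norm difference we have
\[
\|(x,by)-(u_\lambda,v_\lambda)\|_\infty=\max\{\|x-u_\lambda\|_X,\ \|by-v_\lambda\|_Y\}\le\max\{\|x-u_\lambda\|_X,\ b+1\},
\]
and as $b+1<2$, the quantity $\|(x,by)-(u_\lambda,v_\lambda)\|_\infty\to 2$ forces $\|x-u_\lambda\|_X\to 2$. Applying the coordinate projection $P_X$ (again linear, norm-one, weak-weak continuous) to $(u_\lambda,v_\lambda)\xrightarrow{w}(x,by)$ gives $u_\lambda\xrightarrow{w}x$ in $B_X$. Hence $x$ is super-$\Delta$. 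The super-Daugavet case is identical: given $z\in B_X$, feed $(z, by)\in B_{X\oplus_\infty Y}$ into the hypothesis to obtain a net $(u_\lambda,v_\lambda)\xrightarrow{w}(z,by)$ with norm distance tending to $2$; the same estimate using $b<1$ gives $\|x-u_\lambda\|_X\to 2$, while $P_X$ gives $u_\lambda\xrightarrow{w}z$.

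\textbf{Remark on the obstacle.} The only subtlety, and the reason $b<1$ is needed in part (2), is precisely the estimate $\max\{\|x-u_\lambda\|_X,\|by-v_\lambda\|_Y\}$: without the gap $b+1<2$ the $\sup$ could be achieved in the $Y$-coordinate and carry no information about the $X$-coordinate. Everything else is routine: the weak-weak continuity of coordinate projections and inclusions in an $\oplus_\infty$-sum is standard, and the net characterization does the rest. One should double-check the indexing convention in the displayed inequality is typeset without a blank line, and that the cases ``$b=1$'' genuinely fall outside the scope of the statement (they are handled separately, e.g. via Lemma \ref{delta-infty-sums} for the $\Delta$-type analogues).
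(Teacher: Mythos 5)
Your proof is correct and, for part (2), is essentially identical to the paper's argument: project the net onto the coordinates and use that $\|by-v_\lambda\|\leq 1+b<2$ to force the sup onto the $X$-coordinate. For part (1) the paper simply cites \cite{MPZpre}, whereas you give the short direct argument via the net characterization; that is fine and routine (just note that your parenthetical claim that weak convergence makes $\|v_\lambda\|$ eventually at most $b+\delta$ is false as stated, but you correctly discard it and never use it).
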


\begin{proof}
\textup{(1)} was shown in \cite[Propositions 3.26 and 3.28 and Remark 3.29]{MPZpre}, so let us see \textup{(2)}. 

Let $(x,by)\in S_{X\oplus_\infty Y}$ be a super-Daugavet point, and let $\delta>0$ be such that $b<1-\delta$. Fix any $(z,t)\in B_{X\oplus_\infty Y}$. Then there is some net $\{(x_\lambda, y_\lambda)\}_{\lambda\in \Lambda}\subset B_{X\oplus_\infty Y}$ such that $(x_\lambda, y_\lambda)\stackrel{w}{\rightarrow}(z,t)$ and $\|(x,by)- (x_\lambda, y_\lambda)\|\rightarrow 2$. But note that then $\{x_\lambda\}_{\lambda\in \Lambda}\stackrel{w}{\rightarrow} z$, and since $\|by-y_\lambda\|\leq 2-\delta$ for all $\lambda\in \Lambda$, we must have $\|x-x_\lambda\|\rightarrow 2$. This shows that $x\in S_X$ is a super-Daugavet point. For super-$\Delta$ points, it suffices to repeat the same argument, just taking $(z,t)=(x,by)$.
\end{proof}

In the case of $\oplus_1$-sums, the following upwards stability result was shown in \cite{MPZpre}.

\begin{lemma}[{\cite[Propositions 3.26 and 3.28]{MPZpre}}]\label{super-1-sums}
Let $X_1, X_2$ be Banach spaces, and for each $i\in\{1,2\}$ let $x_i\in S_{X_i}$ and $a_i\in (0,1]$ be such that $a_1+a_2=1$. The following holds.
\begin{enumerate}
\item If $x_1$ is super-$\Delta$ (respectively, super-Daugavet), then so is $(x_1,0)\in S_{X_1\oplus_1 X_2}$.
\item If $x_1$ and $x_2$ are both super-$\Delta$ (respectively super-Daugavet), then so is $(a_1 x_1, a_2 x_2)\in S_{X_1\oplus_1 X_2}$.
\end{enumerate}
\end{lemma}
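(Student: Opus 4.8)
The plan is to prove the upward stability of super-$\Delta$ and super-Daugavet points under $\oplus_1$-sums directly from the net characterization recalled in the preliminary lemma: $x$ is super-$\Delta$ if and only if there is a net $\{x_\lambda\}\subset B_X$ with $x_\lambda\xrightarrow{w}x$ and $\|x-x_\lambda\|\to 2$, and $x$ is super-Daugavet if and only if such a net can be found converging weakly to \emph{any} prescribed point $z\in B_X$. Since the paper attributes this statement to \cite[Propositions 3.26 and 3.28]{MPZpre}, the job here is just to recall the argument. I would first observe that the weak topology on $X_1\oplus_1 X_2$ is the product of the weak topologies on the coordinates, so weak convergence in the sum is coordinatewise weak convergence; this is what lets nets be combined coordinatewise.

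For part (1), suppose $x_1\in S_{X_1}$ is super-$\Delta$ (resp.\ super-Daugavet). In the super-$\Delta$ case, take a net $\{u_\lambda\}_{\lambda\in\Lambda}\subset B_{X_1}$ with $u_\lambda\xrightarrow{w}x_1$ and $\|x_1-u_\lambda\|\to 2$, and consider $(u_\lambda,0)\in B_{X_1\oplus_1 X_2}$. Then $(u_\lambda,0)\xrightarrow{w}(x_1,0)$ and $\|(x_1,0)-(u_\lambda,0)\|=\|x_1-u_\lambda\|\to 2$, so $(x_1,0)$ is super-$\Delta$. In the super-Daugavet case, fix any target $(z_1,z_2)\in B_{X_1\oplus_1 X_2}$; since $x_1$ is super-Daugavet and $z_1/\max\{\|z_1\|,1\}$... more simply, applying the super-Daugavet property of $x_1$ to the point $z_1\in B_{X_1}$ (after rescaling if $z_1=0$, any point works) we get a net $\{u_\lambda\}\subset B_{X_1}$ with $u_\lambda\xrightarrow{w}z_1$ and $\|x_1-u_\lambda\|\to 2$; then $(u_\lambda, z_2)\in B_{X_1\oplus_1 X_2}$ converges weakly to $(z_1,z_2)$, and $\|(x_1,0)-(u_\lambda,z_2)\|=\|x_1-u_\lambda\|+\|z_2\|\geq\|x_1-u_\lambda\|\to 2$, giving the conclusion.

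For part (2), suppose $x_1,x_2$ are both super-$\Delta$ (resp.\ both super-Daugavet) and $a_1,a_2\in(0,1]$ with $a_1+a_2=1$. For super-$\Delta$, pick nets $\{u_\lambda\}\subset B_{X_1}$ and $\{v_\mu\}\subset B_{X_2}$ witnessing super-$\Delta$ at $x_1$ and $x_2$ respectively; index the product net over $\Lambda\times M$ and set $w_{(\lambda,\mu)}:=(-u_\lambda, -v_\mu)\in B_{X_1\oplus_1 X_2}$. Then $w_{(\lambda,\mu)}\xrightarrow{w}(-x_1,-x_2)$, which is not the point we want, so instead I would follow \cite{MPZpre} and use that $\|a_1x_1-a_1(-u_\lambda)\|=a_1\|x_1+u_\lambda\|$, which need not tend to $2a_1$. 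The correct choice: use the net $(u_\lambda', v_\mu')$ where $u_\lambda', v_\mu'$ come from the super-$\Delta$ property in a form giving weak convergence \emph{to} $x_i$ but norm distance $\to 2$ — but distance $2$ in $B_{X_i}$ from $x_i$ forces $u_\lambda'\approx -x_i$ in many spaces, incompatible with $u_\lambda'\xrightarrow{w}x_i$ unless $x_i$ is genuinely diametral. So the honest route is: put $p_\lambda:=(a_1 u_\lambda - a_2 x_2', \ldots)$ — here I am going in circles, and the clean argument in \cite{MPZpre} is the one to cite. Concretely, taking $z_{(\lambda,\mu)}:=(a_1 u_\lambda, -a_2 v_\mu)$ with $u_\lambda\xrightarrow{w}x_1$, $\|x_1-u_\lambda\|\to 2$ and $v_\mu\xrightarrow{w}x_2$, $\|x_2-v_\mu\|\to2$ does not land weakly at $(a_1x_1,a_2x_2)$; the fix is to note $\|(a_1x_1,a_2x_2)-(a_1 u_\lambda, a_2 v_\mu)\| = a_1\|x_1-u_\lambda\| + a_2\|x_2-v_\mu\| \to 2a_1+2a_2 = 2$ while $(a_1u_\lambda,a_2v_\mu)\xrightarrow{w}(a_1x_1,a_2x_2)$ and $(a_1u_\lambda,a_2v_\mu)\in B_{X_1\oplus_1X_2}$. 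This is exactly the point: because the $\oplus_1$-norm \emph{adds} the coordinatewise distances, two independent witnessing nets combine to give a witnessing net in the sum. For super-Daugavet, given any target $(z_1,z_2)$, first reduce to $\|z_1\|+\|z_2\|=1$, then apply the super-Daugavet property of $x_i$ at $z_i/\|z_i\|$ (when $z_i\neq0$) to get nets $u_\lambda\xrightarrow{w}z_1$-direction and rescale so that $a_1u_\lambda\xrightarrow{w}z_1$; the same additivity of the $\oplus_1$-norm then forces $\|(a_1x_1,a_2x_2)-(a_1u_\lambda,a_2v_\mu)\|\to2$.

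The main obstacle is bookkeeping with product nets and the rescaling in the super-Daugavet case so that the combined net converges weakly to the \emph{prescribed} target; the norm estimate itself is immediate from the additivity of the $\oplus_1$-norm across coordinates, which is the real reason the result holds. Since everything is routine and already in \cite{MPZpre}, in the final text I would simply state it follows from \cite[Propositions 3.26 and 3.28]{MPZpre}, as the excerpt does.
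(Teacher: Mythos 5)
The paper gives no proof of this lemma at all — it is quoted verbatim from \cite[Propositions 3.26 and 3.28]{MPZpre} — so there is nothing internal to compare against; I can only assess your argument on its own. The two super-$\Delta$ cases are correct: for (1) the net $(u_\lambda,0)$ works, and for (2) the product net $(a_1u_\lambda,a_2v_\mu)$ stays in the ball, converges weakly to $(a_1x_1,a_2x_2)$, and the additivity of the $\oplus_1$-norm gives $a_1\|x_1-u_\lambda\|+a_2\|x_2-v_\mu\|\to 2a_1+2a_2=2$. That part is fine (though the final text should of course omit the visible dead ends).

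Both super-Daugavet cases, however, have a genuine gap. In (1) you take $u_\lambda\xrightarrow{w}z_1$ with $\|x_1-u_\lambda\|\to 2$ and form $(u_\lambda,z_2)$; but $\|x_1-u_\lambda\|\to 2$ forces $\|u_\lambda\|\to 1$, so $\|(u_\lambda,z_2)\|_1=\|u_\lambda\|+\|z_2\|\to 1+\|z_2\|>1$ whenever $z_2\neq 0$, and your witnessing net leaves $B_{X_1\oplus_1X_2}$. The repair is to apply the super-Daugavet property of $x_1$ at the rescaled target $z_1/(1-\|z_2\|)$ and use the net $\bigl((1-\|z_2\|)u_\lambda,\,z_2\bigr)$; one then needs the observation that $\|x_1-(1-\|z_2\|)u_\lambda\|\to 1+(1-\|z_2\|)$, which follows by evaluating a norming functional $f_\lambda$ of $x_1-u_\lambda$ (since $f_\lambda(x_1)\to 1$ and $f_\lambda(-u_\lambda)\to 1$, one gets $\|cx_1-du_\lambda\|\to c+d$ for $c,d\geq 0$), so the total distance is $(2-\|z_2\|)+\|z_2\|=2$. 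In (2) your prescription ``rescale so that $a_1u_\lambda\xrightarrow{w}z_1$'' requires $u_\lambda\xrightarrow{w}z_1/a_1$, which is only a legitimate target when $\|z_1\|\leq a_1$; for a general $(z_1,z_2)\in B_{X_1\oplus_1X_2}$ one must instead choose $\sigma\in[\|z_1\|,1-\|z_2\|]$ (nonempty since $\|z_1\|+\|z_2\|\leq 1$), aim the first net at $z_1/\sigma$ and the second at $z_2/(1-\sigma)$, scale them by $\sigma$ and $1-\sigma$, and again invoke the norming-functional estimate to get $\|a_1x_1-\sigma u_\lambda\|+\|a_2x_2-(1-\sigma)v_\mu\|\to(a_1+\sigma)+(a_2+1-\sigma)=2$; ``additivity of the $\oplus_1$-norm'' alone does not yield this, since the crude triangle inequality $\|a_1x_1-\sigma u_\lambda\|\geq\sigma\|x_1-u_\lambda\|-|a_1-\sigma|$ falls short of $a_1+\sigma$ unless $\sigma=a_1$. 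Since the lemma is being cited rather than proved, none of this affects the paper, but as a standalone proof the super-Daugavet half is incomplete as written.
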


\subsection{ccs-Daugavet and ccs-\texorpdfstring{$\boldsymbol{\Delta}$}{Delta} points}\label{subsection:stability-ccs-Daugavet-Delta}

The stability of ccs-Daugavet points was first studied in \cite{MPZpre}. In the case of $\oplus_\infty$-sums, the following partial characterization can be achieved.

\begin{lemma}\label{ccs-Daugavet-infty-sums}
Let $X,Y$ be Banach spaces, let $x\in S_X$, $y\in S_Y$, and $b\in [0,1]$. The following holds:
\begin{enumerate}
\item If $x$ is ccs-Daugavet, then so is $(x,by)\in S_{X\oplus_\infty Y}$.
\item If $(x,by)\in S_{X\oplus_\infty Y}$ is ccs-Daugavet for some $b<1$, then so is $x$.
\end{enumerate}
\end{lemma}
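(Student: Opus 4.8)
The plan is to mimic the structure of the already-established analogous results for super-Daugavet points (Lemma \ref{super-infty-sums}) and, for part (1), the ccs-Daugavet upward-stability proof in \cite{MPZpre}, while for part (2) exploiting the ``thin'' second coordinate to pull convex combinations of slices in $X\oplus_\infty Y$ back to convex combinations of slices in $X$. Recall that a point $x\in S_X$ is ccs-Daugavet if and only if every convex combination of slices $C$ of $B_X$ satisfies $\sup_{z\in C}\|x-z\|=2$, equivalently (by a standard perturbation argument) if for every such $C$ and every $\varepsilon>0$ there is $z\in C$ with $\|x-z\|>2-\varepsilon$.

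For part (1): assume $x$ is ccs-Daugavet. Let $C=\sum_{i=1}^n \lambda_i S(B_{X\oplus_\infty Y}, (x_i^*, y_i^*), \alpha_i)$ be a convex combination of slices of $B_{X\oplus_\infty Y}$, and fix $\varepsilon>0$. The key observation is that the canonical inclusion $P:X\to X\oplus_\infty Y$, $u\mapsto (u, by)$, is a norm-one operator whose adjoint sends each $(x_i^*, y_i^*)$ to $x_i^*$ after normalizing, so $P^{-1}(C)$ contains a convex combination of slices of $B_X$ (this is exactly the mechanism used in \cite[Proposition 3.26]{MPZpre} for ccs-Daugavet points; one must be slightly careful when some $x_i^*=0$, in which case that slice is all of $B_X$). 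Concretely, pick $u_i\in S(B_X, x_i^*/\|x_i^*\|, \delta)$ for suitable small $\delta$ so that $(u_i, by)$ lies in the $i$-th slice of the $X\oplus_\infty Y$ combination; then apply that $x$ is ccs-Daugavet to the $X$-combination built from the $x_i^*/\|x_i^*\|$ (with the same weights $\lambda_i$) to produce $z=\sum\lambda_i z_i\in B_X$ with $\|x-z\|>2-\varepsilon$, and note $(z, by)\in C$ with $\|(x,by)-(z,by)\| = \|x-z\|>2-\varepsilon$.

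For part (2): assume $(x, by)$ is ccs-Daugavet with $b<1$; fix $\delta\in(0,1)$ with $b<1-\delta$. Given a convex combination of slices $C=\sum_{i=1}^n \lambda_i S(B_X, x_i^*, \alpha_i)$ of $B_X$ and $\varepsilon>0$, form the corresponding combination $\widetilde C=\sum_{i=1}^n \lambda_i S(B_{X\oplus_\infty Y}, (x_i^*, 0), \alpha_i)$ of slices of $B_{X\oplus_\infty Y}$: since the functional ignores the $Y$-coordinate, every element of $\widetilde C$ has first coordinate in $C$. Apply that $(x,by)$ is ccs-Daugavet to get $(z,t)\in\widetilde C$ with $\|(x,by)-(z,t)\|>2-\varepsilon$. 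Because $\|by-t\|\leq b+1<2-\delta$, for $\varepsilon<\delta$ we must have $\|x-z\|>2-\varepsilon$, and $z\in C$; this shows $x$ is ccs-Daugavet.

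The main obstacle — and the only genuinely delicate point — is the bookkeeping in part (1) when some of the defining functionals $(x_i^*, y_i^*)$ have $x_i^*=0$ (or are close to being ``carried'' entirely by the $Y$-coordinate): in that case the $i$-th slice of $\widetilde C$ does not restrict to a proper slice of $B_X$ but to all of $B_X$, so one must verify that the relevant object in $B_X$ is still a convex combination of slices (a full slice is the ``$\alpha$ large'' degenerate case, so this is fine, but it needs to be said), and one must check that the choice of $u_i$ making $(u_i,by)$ lie in the $i$-th $X\oplus_\infty Y$-slice is possible with $\|u_i\|\le 1$ — which it is, since the $Y$-coordinate $by$ contributes a fixed bounded amount to the functional value and $b\le 1$. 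No new ideas beyond those in \cite{MPZpre} are needed; the statement is essentially a transcription of the super-Daugavet argument with ``weakly open set'' replaced by ``convex combination of slices''.
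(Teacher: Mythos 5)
Your part (2) is essentially the paper's own proof: lift the given convex combination of slices of $B_X$ to one of $B_{X\oplus_\infty Y}$ via the functionals $(x_i^*,0^*)\in S_{X^*\oplus_1 Y^*}$, apply the ccs-Daugavet hypothesis on $(x,by)$, and use $\|by-t\|\le b+1<2-\delta$ to conclude that the large distance must be realized in the first coordinate, whose decomposition $z=\sum_i\lambda_i z_i$ lies in the original combination. That part is correct. (The paper disposes of part (1) by citing \cite[Theorem 3.32]{MPZpre} rather than reproving it.)

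Your sketch of part (1), however, contains a genuine gap: you cannot in general choose $u_i\in B_X$ so that $(u_i,by)$ lies in the slice $S\bigl(B_{X\oplus_\infty Y},(x_i^*,y_i^*),\alpha_i\bigr)$. Membership requires $\re(x_i^*(u_i))+\re(y_i^*(by))>\|x_i^*\|+\|y_i^*\|-\alpha_i$, and since $\re(x_i^*(u_i))\le\|x_i^*\|$ for every $u_i\in B_X$, this forces $\re(y_i^*(by))>\|y_i^*\|-\alpha_i$. That fails whenever $by$ is far from norming $y_i^*$ --- for instance $b=0$ and $\alpha_i<\|y_i^*\|$, or $b=\tfrac12$ and $\alpha_i<\tfrac12\|y_i^*\|$. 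Your remark that ``$by$ contributes a fixed bounded amount to the functional value'' does not meet the actual requirement, which is a contribution exceeding $\|y_i^*\|-\alpha_i$. The repair is to drop the insistence that the far point have second coordinate $by$ (nothing in the definition of a ccs-Daugavet point requires this): pick any $(u_i,v_i)\in S_i$, set $\gamma_i:=\re(x_i^*(u_i))+\re(y_i^*(v_i))-\bigl(\|x_i^*\|+\|y_i^*\|-\alpha_i\bigr)>0$, and check that $(w_i,v_i)\in S_i$ for every $w_i\in S\bigl(B_X,x_i^*/\|x_i^*\|,\gamma_i/\|x_i^*\|\bigr)$ when $x_i^*\neq 0$ (when $x_i^*=0$ every $w_i\in B_X$ works, and $B_X$ is itself a slice for a large slice parameter, as you note). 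Applying the ccs-Daugavet property of $x$ to the combination $\sum_i\lambda_i S\bigl(B_X,x_i^*/\|x_i^*\|,\gamma_i/\|x_i^*\|\bigr)$ produces $z=\sum_i\lambda_i w_i$ with $\|x-z\|>2-\varepsilon$, and then $\bigl(z,\sum_i\lambda_i v_i\bigr)\in C$ with $\bigl\|(x,by)-\bigl(z,\sum_i\lambda_i v_i\bigr)\bigr\|\ge\|x-z\|>2-\varepsilon$, which is what is needed.
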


\begin{proof}
\textup{(1)} was shown in \cite[Theorem 3.32]{MPZpre}, so let us see \textup{(2)}. 

Let $(x,by)\in S_{X\oplus_\infty Y}$ be a ccs-Daugavet point with $\|x\|=1>|b|$. Let $\varepsilon>0$. Let $C=\sum_{i=1}^n \lambda_i S_i$ be a convex combination of slices of $B_X$ where for each $1\leq i\leq n$, $S_i:=S(B_x, x_i^*, \delta_i)$ for some $\delta_i>0$ and some $x_i^*\in S_{X^*}$. Fix $0<\delta<\varepsilon$ such that $|b|<1-\delta$. For each $1\leq i\leq n$, set $f_i^*:=(x_i^*, 0^*)\in S_{X^*\oplus_1 Y^*}$, $\widetilde{S_i}:=S(B_{X\oplus_\infty Y}, f_i^*, \delta_i)$, and $\widetilde{C}=\sum_{i=1}^n \lambda_i\widetilde{S_i}$ is a convex combination of slices of $B_{X\oplus_\infty Y}$. Thus, there is $(z,t)\in \widetilde{C}$ with $\|(x,by)-(z,t)\|=\|x-z\|>2-\delta$. For each $1\leq i\leq n$, there is $(z_i, t_i)\in \widetilde{S_i}$ such that
$$z=\sum_{i=1}^n \lambda_i z_i,\quad \text{and}\quad t=\sum_{i=1}^n \lambda_i t_i.$$
Note that $1-\delta_i < \re(f_i^*(z_i, t_i))=\re(x_i^*(z_i))$, so $z_i\in S_i$, and thus, $z\in C$.
\end{proof}

As for ccs-$\Delta$ points, an easy adaptation of the previous proof provides the following downwards stability result.

\begin{lemma}\label{ccs-Delta-infty-sums}
Let $X,Y$ be Banach spaces, let $x\in S_X$, $y\in S_Y$, and $b\in [0,1]$. If $(x,by)\in S_{X\oplus_\infty Y}$ is ccs-$\Delta$ for some $b<1$, then so is $x$.
\end{lemma}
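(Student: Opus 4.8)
The plan is to mimic the proof of Lemma \ref{ccs-Daugavet-infty-sums}(2) almost verbatim, the only difference being that in the ccs-$\Delta$ situation we must make sure that the convex combination of slices we build upstairs actually contains the point $(x,by)$, so that we are allowed to apply the ccs-$\Delta$ hypothesis to it. First I would fix $\varepsilon>0$ and an arbitrary convex combination of slices $C=\sum_{i=1}^n\lambda_i S_i$ of $B_X$ that contains $x$, where $S_i=S(B_X,x_i^*,\delta_i)$ with $x_i^*\in S_{X^*}$ and $\delta_i>0$. Since $x\in C$, we may write $x=\sum_{i=1}^n\lambda_i x_i$ with $x_i\in S_i$ for each $i$. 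Exactly as in the previous proof I would choose $0<\delta<\varepsilon$ with $|b|<1-\delta$, set $f_i^*:=(x_i^*,0^*)\in S_{X^*\oplus_1 Y^*}$, let $\widetilde{S_i}:=S(B_{X\oplus_\infty Y},f_i^*,\delta_i)$, and form the convex combination of slices $\widetilde{C}=\sum_{i=1}^n\lambda_i\widetilde{S_i}$ of $B_{X\oplus_\infty Y}$.

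The key new observation is that $(x_i,by)\in\widetilde{S_i}$ for each $i$: indeed $\|(x_i,by)\|_\infty=\max\{1,|b|\}=1$ so $(x_i,by)\in B_{X\oplus_\infty Y}$, and $\re f_i^*(x_i,by)=\re x_i^*(x_i)>1-\delta_i$ since $x_i\in S_i$. Consequently $(x,by)=\sum_{i=1}^n\lambda_i(x_i,by)\in\widetilde{C}$, so $\widetilde{C}$ is a convex combination of slices of $B_{X\oplus_\infty Y}$ containing $(x,by)$. Applying the ccs-$\Delta$ hypothesis to $(x,by)$ and $\widetilde{C}$, there is $(z,t)\in\widetilde{C}$ with $\|(x,by)-(z,t)\|_\infty>2-\delta$. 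Since $|b|<1-\delta$ forces $\|by-t\|\le 1+|b|<2-\delta$, we must have $\|x-z\|>2-\delta>2-\varepsilon$.

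Finally, as in the proof of Lemma \ref{ccs-Daugavet-infty-sums}(2), write $(z,t)=\sum_{i=1}^n\lambda_i(z_i,t_i)$ with $(z_i,t_i)\in\widetilde{S_i}$; then $1-\delta_i<\re f_i^*(z_i,t_i)=\re x_i^*(z_i)$, so $z_i\in S_i$ and hence $z=\sum_{i=1}^n\lambda_i z_i\in C$. Thus for every $\varepsilon>0$ and every convex combination of slices $C$ containing $x$ we have produced $z\in C$ with $\|x-z\|>2-\varepsilon$, which shows $\sup_{w\in C}\|x-w\|=2$, i.e. $x$ is ccs-$\Delta$. I do not anticipate any real obstacle here: the proof is a routine adaptation of the preceding lemma, and the only point requiring a (trivial) extra check is the membership $(x,by)\in\widetilde{C}$, which is exactly what upgrades the ccs-Daugavet argument to the ccs-$\Delta$ setting.
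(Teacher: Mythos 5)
Your proposal is correct and follows essentially the same route as the paper: the paper's proof is exactly "repeat the ccs-Daugavet argument, noting that $x\in C$ forces $(x,by)\in\widetilde{C}$," which is the observation you isolate and verify. The only cosmetic slip is writing $\|(x_i,by)\|_\infty=\max\{1,|b|\}$ when $x_i\in S_i\subset B_X$ need not have norm one, but all that is needed is $\|(x_i,by)\|_\infty\le 1$, which holds.
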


\begin{proof}
It suffices to repeat the proof of Lemma \ref{ccs-Daugavet-infty-sums}, just noting that if $x\in C$, then $(x,by)\in \tilde{C}$ (this is because for every $(z,by)$ and every $1\leq i\leq n$, we have $f_i^*(z,by)=x_i^*(z)$). This concludes the proof.
\end{proof}


The case of $\oplus_1$-sums is more delicate, however. In \cite[Proposition 3.30]{MPZpre} it was shown that if either $X$ or $Y$ has a strongly exposed point, then $X\oplus_1 Y$ fails to have ccs-Daugavet points. In fact, it is enough to assume that one of the spaces has a denting point.

\begin{proposition}\label{prop:denting-no-ccs-1-sums}
Let $X$ and $Y$ be two Banach spaces such that $X$ contains a denting point $x\in S_X$. Then $X\oplus_1 Y$ has convex combinations of slices around $0$ of arbitraryly small diameter. In particular, $X\oplus_1 Y$ fails to contain ccs-Daugavet points.
\end{proposition}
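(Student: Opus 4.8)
The plan is to exploit the fact that a denting point $x\in S_X$ admits slices of $B_X$ containing $x$ of arbitrarily small diameter, and to combine one such slice with a suitable slice of the other ``half'' of the $\oplus_1$-ball so that the resulting convex combination of slices of $B_{X\oplus_1 Y}$ sits near $0$ and is small. Concretely, fix $\varepsilon>0$. Since $x$ is denting, choose $x^*\in S_{X^*}$ and $\delta>0$ so that $x\in S(B_X,x^*,\delta)$ and $\diam S(B_X,x^*,\delta)<\varepsilon$. I would then consider on $X\oplus_1 Y$ the two functionals $f^*:=(x^*,0^*)$ and $g^*:=(-x^*,0^*)$, both in $S_{(X\oplus_1 Y)^*}=S_{X^*\oplus_\infty Y^*}$, and form the convex combination $C:=\tfrac12 S(B_{X\oplus_1 Y},f^*,\delta') + \tfrac12 S(B_{X\oplus_1 Y},g^*,\delta')$ for an appropriately small $\delta'$ depending on $\delta$ and $\varepsilon$.

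The key computation is to check two things. First, $0\in C$: writing $0 = \tfrac12(x,0) + \tfrac12(-x,0)$, the point $(x,0)$ lies in $S(B_{X\oplus_1 Y},f^*,\delta')$ (since $f^*(x,0)=x^*(x)>1-\delta\geq 1-\delta'$ once $\delta'\geq\delta$, though we want $\delta'$ comparable to $\delta$ — so just take $\delta'=\delta$), and similarly $(-x,0)\in S(B_{X\oplus_1 Y},g^*,\delta)$; hence $0\in C$, so $C$ is a convex combination of slices \emph{around} $0$. Second, and this is the heart of the matter, $\diam C$ is small: any element of $S(B_{X\oplus_1 Y},f^*,\delta)$ has the form $(u,v)$ with $\|u\|+\|v\|\leq 1$ and $\re x^*(u)>1-\delta$; this forces $\|u\|>1-\delta$, hence $\|v\|<\delta$, and moreover $u/\|u\|$ (or rather $u$ itself, after a standard normalization argument) lies essentially in $S(B_X,x^*,\delta)$, so $u$ is within $O(\varepsilon)$ of $x$; likewise elements of the $g^*$-slice have first coordinate within $O(\varepsilon)$ of $-x$ and second coordinate of norm $<\delta$. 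Therefore any point of $C$ is within $O(\varepsilon)$ of $\tfrac12(x,0)+\tfrac12(-x,0)=0$, giving $\diam C = O(\varepsilon)$.

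The main obstacle — really the only place requiring care — is the normalization step: from $\re x^*(u)>1-\delta$ with $\|u\|\le 1$ (not $\|u\|=1$) one must deduce that $u$ is close to $x$, using that $x$ is denting \emph{in $B_X$}. The clean way is: $\|u-x\|\le\|u-u/\|u\|\,\|+\|u/\|u\|-x\|\le(1-\|u\|)+\diam S(B_X,x^*,\delta/\|u\|)$, but this slightly enlarges the slice; one fixes this by choosing the original denting slice with parameter $\delta_0$ small and diameter $<\varepsilon$, then taking $\delta<\delta_0/2$ so that $S(B_X,x^*,\delta/\|u\|)\subseteq S(B_X,x^*,\delta_0)$ whenever $\|u\|>1-\delta$, keeping the diameter bound $<\varepsilon$. (Alternatively one can invoke that denting points of $B_X$ are also denting points of $\co(B_X\cup -B_X)$-type arguments, but the elementary estimate suffices.)

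Once $X\oplus_1 Y$ is shown to have convex combinations of slices around $0$ of arbitrarily small diameter, the ``in particular'' is immediate: if $z\in S_{X\oplus_1 Y}$ were a ccs-Daugavet point, then for every convex combination of slices $C'$ of $B_{X\oplus_1 Y}$ we would have $\sup_{w\in C'}\|z-w\|=2$; applying this to a $C$ as above with $\diam C<\varepsilon$ (so $C\subseteq \varepsilon B_{X\oplus_1 Y}$, since $0\in C$) gives $\sup_{w\in C}\|z-w\|\le \|z\|+\varepsilon = 1+\varepsilon<2$ for $\varepsilon<1$, a contradiction. Hence $X\oplus_1 Y$ has no ccs-Daugavet points.
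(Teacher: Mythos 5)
Your proof is correct and follows essentially the same route as the paper's: average the slices of $B_{X\oplus_1 Y}$ given by $(x^*,0^*)$ and $(-x^*,0^*)$, observe that $0$ lies in the resulting convex combination, and use the small-diameter denting slice plus the $\ell_1$-norm constraint to bound the first and second coordinates. The one ``obstacle'' you flag is not actually there: since the denting slice $S(B_X,x^*,\delta)$ is by definition a slice of the \emph{ball} (not the sphere), any $u\in B_X$ with $\re x^*(u)>1-\delta$ already lies in it and is hence within $\varepsilon$ of $x$, so no normalization step is needed.
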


\begin{proof}
Let $x\in S_X$ be denting. Then for every $\varepsilon>0$, there is a functional $x^*\in S_{X^*}$ and some $0<\delta<\varepsilon$ such that $x\in S_1:=S(B_X, x^*, \delta)$ and $\diam(S_1)<\varepsilon$. Consider $f^*:=(x^*, 0^*)\in S_{X^*\oplus_\infty Y^*}$, and let $C:=\frac{1}{2} S_1 + \frac{1}{2} S_2$, where $S_1:=S(B_{X\oplus_1 Y}, f^*, \delta)$ and $S_2:=S(B_{X\oplus_1 Y}, -f^*, \delta)$. Note that $(0,0)\in C$. 

Let $(z,t)\in C$. There exist $(z_1, t_1)\in S_1$ and $(z_2, t_2)\in S_2$ such that $(z,t)=\frac{1}{2}(z_1, t_1) + \frac{1}{2}(z_2, t_2)$. In particular,
$$\re(x^*(z_1)) = \re(f^*((z_1, t_1)) > 1-\delta\quad \text{and}\quad \re(x^*(z_2))=\re(f^*((z_2, t_2)) < -1+\delta.$$
This implies in particular that $\min\{\|z_1\|, \|z_2\|\}\geq 1-\delta$, so $\max\{\|y_1\|, \|y_2\|\}\leq \delta<\varepsilon$. Finally, note that since $z_1\in S(B_X, x^*, \delta)$, we have $\|x-z_1\|\leq \varepsilon$, and similarly $\|-x-z_2\|\leq \varepsilon$ by symmetry, so
$$\|(z,t)\|=\frac{1}{2}(\|z_1+z_2\| + \|t_1+t_2\|)\leq \frac{1}{2}(\|z_1 - x\| + \|x+z_2\| + \|t_1\| + \|t_2\|) < 2\varepsilon.$$
This shows that $\diam(C)\leq 2\varepsilon$.
\end{proof}

However, the situation is different for ccs-$\Delta$ points, where the following upwards stability result is true regardless of the involved spaces.

\begin{lemma}\label{ccs-Delta-1-sums}
Let $X,Y$ be two Banach spaces and $x\in S_X$. If $x$ is a ccs-$\Delta$ point, then $(x,0)\in S_{X\oplus_1 Y}$ is also a ccs-$\Delta$ point.
\end{lemma}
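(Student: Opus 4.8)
The goal is to show that if $x \in S_X$ is a ccs-$\Delta$ point, then $(x,0) \in S_{X \oplus_1 Y}$ is also a ccs-$\Delta$ point. The natural strategy is to take an arbitrary convex combination of slices $\widetilde{C} = \sum_{i=1}^n \lambda_i \widetilde{S_i}$ of $B_{X \oplus_1 Y}$ with $(x,0) \in \widetilde{C}$, and produce from it a convex combination of slices $C$ of $B_X$ containing $x$, so that points of $\widetilde{C}$ realizing a large distance from $(x,0)$ come from points of $C$ realizing a large distance from $x$. Write each slice as $\widetilde{S_i} = S(B_{X \oplus_1 Y}, (x_i^*, y_i^*), \delta_i)$ with $(x_i^*, y_i^*) \in S_{X^* \oplus_\infty Y^*}$, i.e. $\max\{\|x_i^*\|, \|y_i^*\|\} = 1$.

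\textbf{Key steps.} First, from $(x,0) \in \widetilde{C}$ we get decompositions $(x,0) = \sum_i \lambda_i (x_i, t_i)$ with $(x_i, t_i) \in \widetilde{S_i}$; in particular $\sum_i \lambda_i x_i = x$ and $\sum_i \lambda_i t_i = 0$, and $\re x_i^*(x_i) + \re y_i^*(t_i) > 1 - \delta_i$. Since $\|(x_i,t_i)\|_1 = \|x_i\| + \|t_i\| \le 1$, the value $\re x_i^*(x_i)$ is close to $\|x_i\|$ and in particular $\|x_i\|$ is close to $1$ — more precisely $\re x_i^*(x_i) > \|x_i\| - \delta_i$ after noting $\re y_i^*(t_i) \le \|t_i\| \le 1 - \|x_i\|$. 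The plan is to define the slices of $B_X$ using the normalized functionals: set $C := \sum_i \lambda_i S_i$ where $S_i := S(B_X, x_i^*/\|x_i^*\|, \gamma_i)$ for a suitably chosen $\gamma_i$ (one must handle the possibility $\|x_i^*\| < 1$, but since $\re x_i^*(x_i) > \|x_i\| - \delta_i$ with $\|x_i\|$ near $1$, necessarily $\|x_i^*\|$ is bounded below, say $\|x_i^*\| > 1 - \delta_i$, for $\delta_i$ small; if some $\delta_i$ is not small we may shrink it first since shrinking slices only helps). Then $x_i/\|x_i\| \in S_i$ for an appropriate $\gamma_i$, hence $x \in C$ after also absorbing the small perturbation $\|x_i\| \to 1$ — note $\sum_i \lambda_i (x_i/\|x_i\|)$ is merely close to $x$, so one instead argues that $x$ itself lies in a slightly enlarged convex combination of slices, or equivalently uses that $\sum \lambda_i \|x_i\| = \|x\| = 1$ forces $\|x_i\| = 1$ is \emph{not} generally true, so the cleanest route is: for each $i$ pick $u_i \in S_i$ close to $x_i$ (e.g. $u_i = x_i/\|x_i\|$) and observe the defect $\|x - \sum \lambda_i u_i\|$ is small; then invoke that a ccs-$\Delta$ point has $x \in \overline{\operatorname{co}}(\Delta_\varepsilon(x))$-type flexibility — but actually the correct and simplest fix is to enlarge the $\delta_i$'s in the \emph{original} slices $\widetilde{S_i}$ only downward, so we lose nothing, and build $C$ so that \emph{$x$ itself} is a convex combination of points of the $S_i$: this works because $\re(x_i^*/\|x_i^*\|)(x_i) > \|x_i\| - \delta_i/\|x_i^*\| \ge 1 - \delta_i' $ for a controlled $\delta_i'$, so $x_i \in S(B_X, x_i^*/\|x_i^*\|, \delta_i')$ directly, giving $x = \sum \lambda_i x_i \in C$ with $\gamma_i := \delta_i'$.

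\textbf{Conclusion of the argument.} Having $x \in C$ with $C$ a convex combination of slices of $B_X$, use that $x$ is ccs-$\Delta$: $\sup_{z \in C} \|x - z\| = 2$. So for any $\eta > 0$ there is $z = \sum_i \lambda_i z_i \in C$, $z_i \in S_i$, with $\|x - z\| > 2 - \eta$. Now lift back to $X \oplus_1 Y$: each $z_i \in S(B_X, x_i^*/\|x_i^*\|, \gamma_i)$ satisfies $\re x_i^*(z_i) > \|x_i^*\|(1 - \gamma_i) > 1 - \delta_i$ (by the choice of $\gamma_i$ tracing back through the normalization), so $(z_i, 0) \in \widetilde{S_i}$, whence $(z, 0) = \sum_i \lambda_i (z_i, 0) \in \widetilde{C}$, and $\|(x,0) - (z,0)\|_1 = \|x - z\| > 2 - \eta$. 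Since $\eta$ was arbitrary, $\sup_{w \in \widetilde{C}} \|(x,0) - w\| = 2$, proving $(x,0)$ is ccs-$\Delta$.

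\textbf{Main obstacle.} The only subtlety — and the step I would be most careful with — is the passage between the $\oplus_1$-functional $(x_i^*, y_i^*)$ of sup-norm $1$ and a genuine norm-one functional on $X$: one must verify that $\|x_i^*\|$ cannot be much smaller than $1$ on the relevant slices (which follows from $(x,0) \in \widetilde{C}$ together with $\|(x_i,t_i)\|_1 \le 1$, forcing $\re x_i^*(x_i)$ close to $1$), and then carefully track how the slice parameter transforms under $x_i^* \mapsto x_i^*/\|x_i^*\|$ in \emph{both} directions so that $x \in C$ on the one hand and $(z_i, 0) \in \widetilde{S_i}$ on the other. Everything else is a routine adaptation of the proof of Lemma \ref{ccs-Delta-infty-sums}; indeed the structure closely mirrors that argument, with the roles of $\oplus_\infty$ and $\oplus_1$ interchanged.
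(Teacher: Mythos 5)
Your overall strategy --- pass from the convex combination of slices $\widetilde{C}=\sum_i\lambda_i\widetilde{S_i}$ of $B_{X\oplus_1 Y}$ containing $(x,0)$ to a convex combination of slices $C$ of $B_X$ containing $x$, apply the ccs-$\Delta$ hypothesis there, and lift the far points back in the form $(z,0)$ --- is exactly the paper's. But there is a concrete error at the crux of your argument: you assert that ``$\sum_i\lambda_i\|x_i\|=\|x\|=1$ forces $\|x_i\|=1$'' is \emph{not} generally true, and you then build your workaround on the weaker, and in fact unjustified, claim that $\|x_i\|$ is merely ``close to $1$''. The statement you dismiss is true and elementary: from $(x,0)=\sum_i\lambda_i(x_i,t_i)$ with $\|x_i\|+\|t_i\|\le 1$ one gets $1=\|x\|\le\sum_i\lambda_i\|x_i\|\le\sum_i\lambda_i(1-\|t_i\|)\le 1$, so all inequalities are equalities; since every $\lambda_i>0$, this forces $\|x_i\|=1$ and $t_i=0$ for every $i$. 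Nothing else in your proposal yields any lower bound on $\|x_i\|$: the inequality $\re x_i^*(x_i)>\|x_i\|-\delta_i$ that you derive is perfectly consistent with $\|x_i\|$ being small, so as written the ``controlled $\delta_i'$'' in your construction of $S_i$ is not controlled and the inclusion $x\in C$ is not established. The gap is real but entirely repairable --- by the very equality argument you rejected.

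Once you accept $\|x_i\|=1$ and $t_i=0$, the normalization machinery you worry about essentially evaporates, which is how the paper proceeds: $\re x_i^*(x_i)=\re f_i^*((x_i,t_i))>1-\delta_i$ directly, so $x_i$ lies in the slice of $B_X$ determined by $x_i^*$ and $\delta_i$, and conversely any $z_i$ in that slice gives $(z_i,0)\in\widetilde{S_i}$. (One still needs the small observation $\|x_i^*\|\ge\re x_i^*(x_i)>1-\delta_i$ in order to replace $x_i^*$ by the unit functional $x_i^*/\|x_i^*\|$ while preserving both inclusions, e.g. with parameter $\gamma_i:=(\|x_i^*\|-1+\delta_i)/\|x_i^*\|>0$; that bookkeeping, which you describe at the end, is fine.) A further caution: your parenthetical suggestion to ``shrink the $\delta_i$ first since shrinking slices only helps'' is not safe for $\Delta$-type points, because shrinking $\widetilde{S_i}$ may expel $(x,0)$ from $\widetilde{C}$ and destroy the hypothesis you need; fortunately it is also unnecessary, since $\|x_i^*\|>1-\delta_i$ holds for every $\delta_i$ once $\|x_i\|=1$ is known.
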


\begin{proof}
Suppose that $x$ is a ccs-$\Delta$ point and fix $\varepsilon>0$. Let $\widetilde{C}=\sum_{i=1}^n \lambda_i\widetilde{S_i}$ be a convex combination of slices of $B_{X\oplus_\infty Y}$ containing $(x,0)$ where for each $1\leq i\leq n$, $\widetilde{S_i}:=S(B_{X\oplus_1 Y}, f_i^*, \delta_i)$ for some $f_i^*=(x_i^*, y_i^*)\in S_{X^*\oplus_\infty Y^*}$ and $\delta_i>0$. For each $1\leq i\leq n$, there exist $(z_i,t_i)\in \widetilde{S_i}$ such that $(x,0)=\sum_{i=1}^n \lambda_i (z_i, t_i)$. But since $x$ has norm $1$ and $\sum_{i=1}^{n} \lambda_i=1$, we must have $\|z_i\|=1$, and so, $t_i=0$, for all $1\leq i\leq n$. Now, for each $1\leq i\leq n$, set $S_i:=S(B_X, x_i^*, \delta_i)$. Notice that
$$\re(f^*((z_i, t_i))) = \re(x_i^*(z_i)) > 1-\delta_i.$$
This means that $x$ belongs to the convex combination of slices $C=\sum_{i=1}^n \lambda_i S_i$ of $B_X$, so there must be some $z\in C$ such that $\|x-z\|\geq 2-\varepsilon$. Note now that
$$\re(x_i^*(z)) = \re(f^*((z,0)))>1-\delta_i,$$
so $(z,0)\in \widetilde{C}$, and clearly we have that
$$\|(z,0)-(x,0)\|=\|z-x\|\geq 2-\varepsilon.$$
This shows that $(x,0)$ is a ccs-$\Delta$ point.
\end{proof}

We will use this result in a later section to extend some conclusions from \cite[Section 4]{MPZpre}. Also, both of the previous results can be adapted to other $\oplus_p$-sums if $1\leq p < \infty$.


\subsection{Final remarks on stability results}\label{Subsection:stability-final-remarks}

Stability results can be used to study the points in spaces that can be split as a direct sum $Z=X\oplus Y$. In particular, upwards stability results provide sufficient conditions to get such points $(x,y)\in S_{X\oplus Y}$, whereas downwards stability results provide necessary conditions for such points to exist. Also, note the following.

\begin{remark}\label{remark:uses-of-stability}
The stability results associating $x\in S_X$ with $(x,0)\in S_{X\oplus Y}$ can be used as in \cite[Remark 4.4]{MPZpre} to extend the conclusions stated there. More specifically, as a consequence of all these stability lemmas, we have the following:
\begin{enumerate}
\item If two notions among Daugavet, $\Delta$, super-Daugavet, super-$\Delta$, and ccs-Daugavet can be separated in a Banach space $X$, then they are also distinct in any space that can be expressed as $Z=X\oplus_\infty Y$ for some Banach space $Y$. This is the case for $C_0(L,X)$ when $L$ has isolated points, for $A(K,X)$ when $\Gamma$ has isolated points, and for $L_\infty(\mu, X)$ when $\mu$ has atoms. This provides a big contrast between the scalar-valued and the vector-valued versions of these spaces.
\item If $\Delta$ and Daugavet points can be separated in a Banach space $X$, then they are also distinct in any spaces that can be expressed as $Z=X\oplus_1 Y$ for some Banach space $Y$. This is the case for $L_1(\mu, X)$ spaces when $\mu$ has atoms.
\end{enumerate}
\end{remark}

\section{Main results}\label{section-results}

In this section we will study the point notions in several classes of Banach spaces. Given a set $A$ in some topological space, we will always denote by $A'$ the the set of its limit points.

\subsection{Results on \texorpdfstring{$\boldsymbol{C_0(L,X)}$}{C0(L,X)} spaces}\hfill\\ \label{subsection:C0LX}

Let $K$ be a compact Hausdorff space and let $X$ be a Banach space. As mentioned in the introduction, it is known (see \cite[Corollary 4.3]{MPZpre}) that for $f\in S_{C(K)}$, the first six diametral point notions from Definition \ref{def-six-notions} are equivalent, and in fact $f$ is any of those types of points if and only if it attains its maximum (norm) at a limit point, that is, if there exists $t\in K'$ such that $|f(t)|=\|f\|_\infty$. If $C(K)$ is a complex space or an infinite-dimensional space, then those notions also coincide with $\nabla$ points by Proposition \ref{prop-nablas-complex} (note however that this is no longer true in the real case, as for all $n\in\bbn$, the real space $\ell_\infty^n$ has $\nabla$ points, see \cite{HLPV}). All these results can also be extended to $C_0(L)$ spaces where $L$ is a locally compact Hausdorff space, as we will see (see also \cite[Proposition 3.9 and Remark 3.10]{HLPV}). 

However, the situation is different in the vector-valued case. For $f\in S_{C(K,X)}$, attaining the maximum norm at a limit point implies being a ccs-Daugavet point (see \cite[Theorem 4.2]{MPZpre}), but being $\Delta$ does not imply this condition in general, as pointed out in \cite[Remark 4.4]{MPZpre} (also see Remark \ref{remark:uses-of-stability}). In this section we will study the point notions in the more general setting of $C_0(L,X)$ spaces for a locally compact Hausdorff $L$. 

In the next result, we fully characterize Daugavet, $\Delta$, and $\nabla$ points of $C_0(L,X)$ in terms of $L$ and $X$. We also characterize super-Daugavet, super-$\Delta$, and ccs-Daugavet points in terms of $L$ and finite $\oplus_\infty$-sums of $X$, and provide some conditions about ccs-$\Delta$ points.

\begin{theorem}\label{theo-charac-Daug-C0LX}
Let $L$ be a locally compact Hausdorff topological space, let $X$ a Banach space, and let $f\in S_{C_0(L, X)}$.
\begin{enumerate}
\item If $f$ attains its maximum norm at some non-isolated point $t_0\in L'$ (that is, if $\|f(t_0)\|=\|f\|_\infty$), then $f$ is a ccs-Daugavet point.
\item Otherwise, let $T:=\{t_1,\ldots,t_n\}\subset L$ ($n\in\bbn$) be the collection of isolated points $t_k$ where $f$ attains its maximum norm, which is finite and non-empty. We have the following.
\begin{enumerate}
\item $f$ is super-Daugavet (resp. super-$\Delta$, or ccs-Daugavet) if and only if $f|_T$ is super-Daugavet (resp. super-$\Delta$, or ccs-Daugavet) in $C_0(T,X)=X\oplus_\infty\stackrel{n)}{\cdots}\oplus_\infty X$.
\item $f$ is Daugavet if and only if there is some $t_0\in T$ such that $f|_{\{t_0\}}$ is Daugavet in $C_0(\{t_0\}, X)=X$.
\item If $f$ is ccs-$\Delta$, then $f|_T$ is ccs-$\Delta$ in $C_0(T,X)=X\oplus_\infty\stackrel{n)}{\cdots}\oplus_\infty X$.
\item $f$ is $\Delta$ if and only if either there is some $t_k\in T$ such that $f|_{\{t_k\}}$ is $\Delta$ in $C_0(\{t_k\}, X)=X$, or for each $\{p_k\}_{k=1}^n\subset (1,+\infty)$ with $\sum_{k=1}^n \frac{1}{p_k}=1$, there exists some $k\in\{1,\ldots,n\}$ such that $f|_{\{t_k\}}$ is $\Delta_{p_k}$ in $C_0(\{t_{k}\}, X)=X$. 
\item If $L\neq T$, or if $C_0(L,X)$ is a complex Banach space, $f$ is $\nabla$ if and only if $f$ is Daugavet.
\item If $L=T$, then $f$ is $\nabla$ if and only if either $f|_{\{t_i\}}$ is $\nabla$ in $C_0(\{t_i\}, X)=X$ for all $1\leq i\leq n$, or there is some $t_0\in T$ such that $f|_{\{t_0\}}$ is Daugavet in $C_0(\{t_0\}, X)=X$ (and in this case, $f$ is Daugavet as well).
\end{enumerate}
\end{enumerate}
\end{theorem}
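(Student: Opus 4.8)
The plan is to reduce each statement to the stability results of Section \ref{section:stability}, using the canonical $\oplus_\infty$-decomposition of $C_0(L,X)$ induced by the isolated points of $L$. The crucial structural observation is this: if $T=\{t_1,\dots,t_n\}$ is the (finite, nonempty) set of isolated points where $f$ attains its norm and $f$ does \emph{not} attain its norm at any point of $L'$, then there is a clopen set $U\supseteq T$ with $U\setminus T$ a union of isolated points, and moreover $\sup_{t\in L\setminus T}\|f(t)\|_X<1$; consequently $C_0(L,X)$ splits isometrically as $C_0(T,X)\oplus_\infty C_0(L\setminus T,X)=\bigl(X\oplus_\infty\stackrel{n)}{\cdots}\oplus_\infty X\bigr)\oplus_\infty C_0(L\setminus T,X)$, and under this decomposition $f$ corresponds to $(f|_T,\, g)$ where $\|g\|_\infty=:b<1$. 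This is the key technical step: one must argue that $f$ genuinely attains its norm at one of finitely many isolated points (finiteness because $f$ vanishes at infinity, nonemptiness because $f\in S_{C_0(L,X)}$ and the norm is not attained on $L'$), and that the splitting is isometric. I expect this to be the main obstacle, since it requires a careful argument that the norm of $f$ off $T$ is strictly below $1$ (otherwise one could find a limit point of approximate maximizers, contradicting the case hypothesis) — but it is a standard compactness-type argument for functions vanishing at infinity.

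Once this decomposition is in place, parts (a)–(f) follow by feeding $(f|_T,g)$ with $b<1$ into the relevant lemmas. For (a): the ``if'' directions for super-Daugavet, super-$\Delta$, and ccs-Daugavet come from Lemma \ref{super-infty-sums}(1) and Lemma \ref{ccs-Daugavet-infty-sums}(1) (iterating the two-summand version to pass from $f|_T$ to $(f|_T,g)$), and the ``only if'' directions come from Lemma \ref{super-infty-sums}(2) and Lemma \ref{ccs-Daugavet-infty-sums}(2), valid precisely because $b<1$. For (b): apply Lemma \ref{daugavet-infty-sums} with $n+1$ summands, noting that the last coordinate has norm $b<1$, so a Daugavet point must come from one of the norming coordinates $t_k$, each of which equals $X$. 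For (c): this is exactly Lemma \ref{ccs-Delta-infty-sums} (again passing through the two-summand version, with the last summand having norm $b<1$), which only goes one direction — matching the one-directional statement in the theorem. For (d): apply Lemma \ref{delta-infty-sums} with the $n+1$ summands $X,\dots,X,C_0(L\setminus T,X)$; since only the first $n$ of them can have coefficient $1$, the set ``$A$'' there is exactly $\{1,\dots,n\}$, and condition (1) of that lemma becomes ``some $f|_{\{t_k\}}$ is $\Delta$ in $X$'' while condition (2) becomes the stated $\Delta_{p_k}$ condition.

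For the $\nabla$ statements (e) and (f), first invoke Proposition \ref{prop-nablas-complex} to dispose of the complex case: there $\nabla$ equals Daugavet, so (e) holds and there is nothing more to prove. In the real case with $L=T$, $C_0(L,X)=X\oplus_\infty\stackrel{n)}{\cdots}\oplus_\infty X$ with all coordinates of norm $1$, so (f) is the $n$-fold iteration of Lemma \ref{nabla-infty-sums}(2): $f=(f|_{\{t_1\}},\dots,f|_{\{t_n\}})$ is $\nabla$ iff either some coordinate is Daugavet (and then, by Lemma \ref{daugavet-infty-sums}, $f$ itself is Daugavet), or all coordinates are $\nabla$. Finally, for (e) in the real case with $L\neq T$: here $f=(f|_T,g)$ with the extra coordinate $g$ of norm $b<1$, so Lemma \ref{nabla-infty-sums}(1) applies (after collapsing $f|_T$ into a single $X_1=C_0(T,X)$ coordinate), giving that $f$ is $\nabla$ iff $f|_T$ and $f$ are both Daugavet; since Daugavet-ness of $f|_T$ is subsumed by that of $f$ via Lemma \ref{daugavet-infty-sums}(applied to $f|_T$), this collapses to ``$f$ is Daugavet.'' In all cases the verification that the two-summand lemmas extend to the finitely-many-summand groupings is routine and already noted in Section \ref{section:stability}.
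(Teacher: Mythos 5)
Your treatment of item (2) is correct and is essentially the paper's own argument: the observation that $T$ is finite and nonempty, that $\sup_{t\in L\setminus T}\|f(t)\|_X<1$ (by the compactness-of-approximate-maximizers argument you sketch), and that $C_0(L,X)$ therefore splits isometrically as $C_0(T,X)\oplus_\infty C_0(L\setminus T,X)$ with $f=(f|_T,g)$, $\|g\|_\infty<1$, after which (a)--(f) are exactly Lemmas \ref{super-infty-sums}, \ref{ccs-Daugavet-infty-sums}, \ref{daugavet-infty-sums}, \ref{ccs-Delta-infty-sums}, \ref{delta-infty-sums}, \ref{nabla-infty-sums} and Proposition \ref{prop-nablas-complex}. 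Your readings of the $\nabla$ cases (e) and (f) and of the $\Delta_{p_k}$ condition in (d) are all accurate.

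However, there is a genuine gap: you never prove item (1), which is the assertion that if $f$ attains its norm at a non-isolated point $t_0\in L'$ then $f$ is a ccs-Daugavet point. This cannot be "reduced to the stability results of Section \ref{section:stability}" --- no $\oplus_\infty$-decomposition isolates a limit point, and the stability lemmas give no information here. It is in fact the bulk of the paper's proof: one must adapt the argument of \cite[Theorem 4.2]{MPZpre} from $C(K,X)$ to the locally compact setting, constructing a nested sequence of open neighbourhoods $U_n$ of $t_0$ with compact closures (this is where local compactness is used and where the $C(K,X)$ result cannot simply be cited, since $C_0(L,X)$ is only a proper subspace of $C(L\cup\{\infty\},X)$), Urysohn functions $\phi_n$ supported on the annuli $\overline{U_{n+1}}\setminus U_{n+2}$, and for each representative $g_i$ of each slice in the given convex combination the weakly convergent perturbations $g_{i,n}=\frac{n}{n+1}\bigl(g_i-(f(t_0)+g_i(t_0))\phi_n\bigr)$, which eventually land back in the slices and satisfy $\|f-\sum_i\lambda_i g_{i,n}\|\to 2$. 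Without this construction (or an equivalent one) the theorem's first, and hardest, claim is unproved.
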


\begin{proof}
We show all the cases.
\begin{enumerate}\setlength{\parindent}{0pt}
\item We adapt the proof of \cite[Theorem 4.2]{MPZpre}. Let $x^* \in S_{X^*}$ such that $\operatorname{Re}\,x^*(f(t_0)) = \|f(t_0)\|_X = 1$ and $C:= \sum_{i=1}^{m} \lambda_i S_i$ be a convex combination of slices of $B_{C_0(L, X)}$. Pick $g_i \in S_i$ for every $i = 1, \cdots m$. Let $K$ be a compact subset of $L$ that contains $t_0$ such that $\|f(t)\|_X < \epsilon$ for all $t \in L \setminus K$. Since $t_0$ is a limit point in $L$, we claim that there exists a sequence $(U_n)_{n=0}^{\infty}$ of open subsets containing $t_0$ that satisfies the following properties:
\begin{enumerate}
\item[(i)] $U_0 = L$
\item[(ii)] $\overline{U_{n+1}}$ is a compact, proper subset of $U_n$ for every $n \geq 0$
\item[(iii)] $\operatorname{Re}\,x^*(f(t)) > 1 - \frac{1}{n+1}$ and $\|g_i(t) - g_i(t_0)\|_X < \frac{1}{n+1}$ for every $t \in \overline{U_n}$, $i \in \{1, \cdots m\}$, and $n \geq 1$.
\end{enumerate} 
We construct this by an inductive process. Let $L = U_0$. Since $K \subset U_0$ is normal and has non-empty interior, there exists an open subset $V_1$ containing $t_0$ such that $V_1 \subset \overline{V_1}\subset K \subset U_0$. Then by normality, there exists an open subset $W_1$ containing $t_0$ such that $\overline{W_1}$ is a proper subset of $\overline{V_1}$. By the continuity of $x^*\circ f$ and $g_i$'s, there exists an open subset $U_1$ of $W_1$ containing $t_0$ such that $\operatorname{Re}\,x^*(f(t)) > \frac{1}{2}$ and $\|g_i(t) - g_i(t_0)\|_X < \frac{1}{2}$ for every $t \in \overline{U_1}$ and $i \in \{1, \cdots m\}$. Next, notice that $U_1$ also contains a compact neighborhood $K_1$ of $t_0$, since $L$ is locally compact. Then there exists an open subset $V_2$ containing $t_0$ such that $V_2 \subset \overline{V_2}\subset K_1 \subset U_1$. Hence we can also find an open subset $W_2$ containing $t_0$ such that $\overline{W_2}$ is a proper subset of $\overline{V_2}$, and so there exists an open subset $U_2$ containing $t_0$ such that $\operatorname{Re}\,x^*(f(t)) > \frac{2}{3}$ and $\|g_i(t) - g_i(t_0)\|_X < \frac{1}{3}$ for every $t \in \overline{U_2}$ and $i \in \{1, \cdots m\}$. By inductively continuing this procedure, we obtain the desired sequence $(U_n)_{n=1}^{\infty}$ of open subsets containing $t_0$ that satisfy the conditions (i), (ii), and (iii).


Now, for each $n\in\bbn$, define $F_n = \overline{U_{n+1}} \setminus U_{n+2}$. Each $F_n$ is not only closed but also compact. Moreover, $F_n \subset U_{n} \setminus \overline{U_{n+3}} \subset L$. Hence, there exists $\phi_n$ such that $0 \leq \phi_n(t) \leq 1$ where $\phi_n(t) = 1$ on $t \in F_n$ and $\phi_n(t) = 0$ on $t \in (L\setminus U_n) \cup \overline{U_{n+3}}$. Notice that the sequence $(\phi_n)_{n=1}^{\infty} \subset S_{C_0(L)}$ converges pointwise to $0$. For each $i\in\{1,\ldots,m\}$, define
\[
g_{i, n}(t): = \frac{n}{n+1}(g_i(t) - (f(t_0) + g_i(t_0))\phi_n(t)).
\]
For every $t \in (L \setminus U_n) \cup \overline{U_{n+3}}$, $\|g_{i,n}(t)\|_X = \frac{n}{n+1}\|g_i(t)\|_X < 1$. On the other hand, for $t \in U_n\setminus \overline{U_{n+3}}$ we have
\[
\|g_{i,n}(t)\|_X  = \frac{n}{n+1}\|g_i(t) - (f(t_0)) + g_i(t_0))\phi_n(t)\|_X \leq \frac{n}{n+1}(1 + \|g_i(t) - g_i(t_0)\|_X) \leq 1.
\]  
Thus, each $g_{i,n} \in B_{C_0(L, X)}$ and so $g_{i,n}$ converges weakly to $g_i$ with $n$ for each $i = 1, \cdots, m$. From the fact that slices are weakly open subsets, there exists $N$ such that for every $n \geq N$, $g_{i,n} \in S_i$. Furthermore, the function $h_n: = \sum_{i=1}^{m}\lambda_i g_{i,n}$ where $\sum_{i=1}^{m}\lambda_i = 1$ belongs to $C$ for every $n \geq N$. 

Let $t \in F_{n} \subset \overline{U_{n+1}} \subset \overline{U_n}$. From the condition (iii) we have $\operatorname{Re}\,x^*(f(t)) \geq 1 - \frac{1}{n+1}$. Hence,

\begin{eqnarray*}
\operatorname{Re}\,x^*(g_{i,n}(t)) &=& \frac{n}{n+1}(\operatorname{Re}\,x^*(g_i(t)) - 1 - \operatorname{Re}\,x^*(g_i(t_0)))\\ &\leq& \frac{n}{n+1}(\operatorname{Re}\,x^*(g_i(t)) - \operatorname{Re}\,x^*(g_i(t_0)) + \operatorname{Re}\,x^*(g_i(t_0)) - 1 - \operatorname{Re}\,x^*(g_i(t_0)))\\
&\leq& \frac{n}{n+1}\left(\operatorname{Re}\,x^*(g_i(t_0)) -1 - \operatorname{Re}\,x^*(g_i(t_0))\right) + \frac{1}{n+1}\\
&=& -1 + \frac{2}{n+1}.
\end{eqnarray*}
This implies that
\[
\|f - h_n\| \geq \operatorname{Re}\,x^*\left(f(t) - \sum_{i=1}^{m}\lambda_ig_{i,n}(t)\right) \geq 1 - \frac{1}{n+1} + 1 - \frac{2}{n+1} = 2 - \frac{3}{n+1},
\]
and so $\|f - h_n\| \rightarrow 2$ as $n \rightarrow \infty$. The proof is finished.

\item Let $T$ be given, and note that $T$ is necessarily finite and non-empty. If $T=\{t_1, \ldots, t_n\}\subset L$ for some $n\in\bbn$, note that $C_0(T, X)=X\oplus_\infty \stackrel{n)}{\cdots} \oplus_\infty X$, and in fact, for each $t\in L$, $C_0(\{t\}, X)=X$ isometrically. Note also that if $L\neq T$, then $f=(f|_{L\setminus T}, f|_T)$ in $C_0(L, X)=C_0(L\setminus T, X)\oplus_\infty C_0(T, X)$, and $\|f|_{L\setminus T}\|<1$ by assumption. We can now finish the proof in view of the stability lemmas.

For \textup{(a)}, it suffices to apply Lemmas \ref{super-infty-sums} and \ref{ccs-Daugavet-infty-sums}. \textup{(b)} follows from Lemma \ref{daugavet-infty-sums}. \textup{(c)} follows from Lemma \ref{ccs-Delta-infty-sums}. \textup{(d)} follows from Lemma \ref{delta-infty-sums}. Finally, for $\nabla$ points, the complex case is shown in Proposition \ref{prop-nablas-complex}, and in the real case, items \textup{(e)} and \textup{(f)} follow from Lemma \ref{nabla-infty-sums}. \qedhere
\end{enumerate}
\end{proof}



An immediate consequence of the theorem is the following well-known fact: $C_0(L,X)$ has Daugavet property if and only if either $L=L'$ or $X$ has Daugavet property. For the backward implication, just use the previous theorem. For the forward implication, it suffices to apply the following well-known lemma, which can be derived from Lemma \ref{daugavet-infty-sums}.

\begin{lemma}\label{lemma-Daugavet-Down-infty-sum}
Given two Banach spaces $X$ and $Y$, if $Z=X\oplus_\infty Y$ has Daugavet property, then so does $X$.
\end{lemma}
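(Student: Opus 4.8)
The statement to prove is Lemma~\ref{lemma-Daugavet-Down-infty-sum}: if $Z=X\oplus_\infty Y$ has the Daugavet property, then so does $X$. My plan is to deduce this directly from Lemma~\ref{daugavet-infty-sums}, which characterizes Daugavet points in finite $\oplus_\infty$-sums. Recall that a Banach space has the Daugavet property if and only if every point of its unit sphere is a Daugavet point. So it suffices to show that every $x\in S_X$ is a Daugavet point of $X$.

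Here is the argument I would carry out. Fix an arbitrary $x\in S_X$, and pick any $y\in S_Y$ (if $Y=\{0\}$ there is nothing to prove, as then $Z=X$). Consider the point $(x,y)\in S_{X\oplus_\infty Y}$. Since $Z$ has the Daugavet property, $(x,y)$ is a Daugavet point of $Z$. Now apply Lemma~\ref{daugavet-infty-sums} with $n=2$, $X_1=X$, $X_2=Y$, $x_1=x$, $x_2=y$, and $a_1=a_2=1$: the lemma tells us that $(x,y)$ is Daugavet if and only if there is some $k\in\{1,2\}$ with $a_k=1$ and $x_k$ Daugavet. Since $a_1=a_2=1$, this means either $x$ is a Daugavet point of $X$ or $y$ is a Daugavet point of $Y$. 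This does not immediately finish the argument, because it could conceivably be $y$ rather than $x$ that is Daugavet.

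To circumvent this, I would choose $y$ more carefully, or better, run the argument symmetrically. Simplest fix: if $X$ contains even one point that fails to be a Daugavet point, call it $x_0\in S_X$; then for \emph{every} $y\in S_Y$, the point $(x_0,y)$ is Daugavet in $Z$, so by Lemma~\ref{daugavet-infty-sums} every $y\in S_Y$ must be a Daugavet point of $Y$, i.e.\ $Y$ has the Daugavet property. But now take any $x\in S_X$ and any $y\in S_Y$; again $(x,y)$ is Daugavet in $Z$. If $x$ were Daugavet we'd be done, so suppose not and repeat — we only ever conclude $Y$ has the Daugavet property, which is consistent. So this naive approach still does not pin down $X$. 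The clean resolution is to use instead the point $(x, \tfrac12 y)$ or, even cleaner, to note that we really want a point whose second coordinate cannot be Daugavet-forcing. Actually the cleanest route: apply Lemma~\ref{daugavet-infty-sums} to $(x, a_2 x_2)$ with $0<a_2<1$. Then the only index $k$ with $a_k=1$ is $k=1$, so the lemma forces $x_1=x$ to be Daugavet. Concretely: fix $x\in S_X$, fix any $y\in S_Y$, and consider $(x,\tfrac12 y)\in S_Z$; as $Z$ has the Daugavet property this is a Daugavet point, and since its only coordinate of modulus $1$ is the first, Lemma~\ref{daugavet-infty-sums} (with $a_1=1$, $a_2=\tfrac12$) gives that $x$ is a Daugavet point of $X$. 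As $x$ was arbitrary, $X$ has the Daugavet property.

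The main (and only) subtlety is exactly this index-selection point: one must make sure to test against a point of $Z$ in which $X$'s coordinate is the unique norming one, so that the characterization in Lemma~\ref{daugavet-infty-sums} lands on $X$ and not on $Y$; choosing the second coordinate with norm strictly less than $1$ does the job. Everything else is routine: invoking the equivalence ``Daugavet property $\iff$ all unit-sphere points are Daugavet points'' (recorded in the discussion after Definition~\ref{def-six-notions}), and noting $\|(x,\tfrac12 y)\|_\infty = \max\{1,\tfrac12\}=1$ so the tested point indeed lies on $S_Z$. I would write the proof in two or three sentences along these lines.

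\begin{proof}
We may assume $Y\neq\{0\}$, since otherwise $X=Z$ and there is nothing to prove. Fix any $y\in S_Y$. Recall that $X$ has the Daugavet property if and only if every point of $S_X$ is a Daugavet point of $X$. So let $x\in S_X$ be arbitrary, and consider the point $\left(x,\tfrac12 y\right)\in S_{X\oplus_\infty Y}=S_Z$. Since $Z$ has the Daugavet property, $\left(x,\tfrac12 y\right)$ is a Daugavet point of $Z$. Applying Lemma \ref{daugavet-infty-sums} with $n=2$, $X_1=X$, $X_2=Y$, $a_1=1$, and $a_2=\tfrac12$, we conclude that there is some $k\in\{1,2\}$ with $a_k=1$ such that the corresponding coordinate is a Daugavet point; as $a_2=\tfrac12\neq 1$, this forces $k=1$, i.e.\ $x$ is a Daugavet point of $X$. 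Since $x\in S_X$ was arbitrary, $X$ has the Daugavet property.
\end{proof}
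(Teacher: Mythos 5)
Your proof is correct and follows exactly the route the paper intends: the paper states this lemma without proof, remarking only that it ``can be derived from Lemma~\ref{daugavet-infty-sums},'' and your argument supplies that derivation, with the right observation that one must test against a point $(x,\tfrac12 y)$ whose second coordinate has norm strictly less than $1$ so that the characterization forces the first coordinate to be the Daugavet one. The edge case $Y=\{0\}$ is also handled properly.
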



Recall that a Banach space $X$ is said to be \textit{locally uniformly rotund} (or just LUR) if 
$$\lim_k \|x_k-x\|=0\quad \text{whenever}\quad \lim_k \left\|\frac{1}{2} (x_k+x)\right\|=\lim_k \|x_k\|=\|x\|.$$
Equivalently, a Banach space is LUR if and only if for every $\varepsilon>0$ and every $x\in S_X$, the \textit{modulus of local rotundity at $x$}
$$\delta_X(x, \varepsilon):=\inf\left\{ 1-\left\|\frac{x+y}{2}\right\|:\, y\in B_X,\, \|x-y\|\geq\varepsilon \right\}$$
is positive.

Theorem \ref{theo-charac-Daug-C0LX} reinforces once more the claim derived from Remark \ref{remark:uses-of-stability}: unlike in the scalar-valued case, in $C_0(L,X)$ spaces the diametral point notions do not coincide in general. However, the situation changes if $X$ is LUR. In fact, we have the following.

\begin{theorem}\label{theo:implications-C0LX}
Let $L$ be a locally compact Hausdorff space, and let $X$ be a Banach space. Given $f\in S_{C_0(L,X)}$, consider the following diagram of implications
\[
\begin{tikzcd}
    \begin{array}{c} \|f(t)\|=\|f\|_\infty\\ \text{ for some }t\in L' \end{array} \\
    {f\text{ ccs-Daugavet}} & {f\text{ super-Daugavet}} & {f\text{ Daugavet}} & {f\text{ }\nabla} \\
    {f\text{ ccs-}\Delta} & {f\text{ super-}\Delta} & {f\text{ }\Delta} & \begin{array}{c} \|f(t)\|=\|f\|_\infty\\ \text{ for some }t\in L' \end{array}
    \arrow["{(1)}", from=1-1, to=2-1]
    \arrow[from=2-1, to=2-2]
    \arrow[from=2-1, to=3-1]
    \arrow[from=2-2, to=2-3]
    \arrow[from=2-2, to=3-2]
    \arrow[from=2-3, to=2-4]
    \arrow[from=2-3, to=3-3]
    \arrow["{(4)}", from=2-3, to=3-4]
    \arrow["{(3)}", from=2-4, to=3-4]
    \arrow[bend right=20, from=3-1, to=3-3]
    \arrow[from=3-2, to=3-3]
    \arrow["{(2)}"', from=3-3, to=3-4]
\end{tikzcd}
\]
Then:
\begin{enumerate}
\item[(i)] The unnumbered implications are always true in every Banach space.
\item[(ii)] The implication \textup{(1)} always holds in $C_0(L,X)$.
\item[(iii)] While the implication \textup{(2)} is not always true in general, it is true if $X$ is LUR.
\item[(iv)] The implication \textup{(3)} holds for every $f\in S_{C_0(L,X)}$ if and only if either $L=L'$ or $X$ has no $\nabla$ points.
\item[(v)] The implication \textup{(4)} holds for every $f\in S_{C_0(L,X)}$ if and only if either $L=L'$ or $X$ has no Daugavet points.
\end{enumerate}
\end{theorem}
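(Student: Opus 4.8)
The plan is to deduce everything from the characterization in Theorem~\ref{theo-charac-Daug-C0LX} together with the general implication diagram recalled after Definition~\ref{def-six-notions}. Parts (i) and (ii) will then be essentially free: the unnumbered arrows are the implications valid in every Banach space (a nonempty weakly open subset of $B_X$ contains a convex combination of slices by Bourgain's Lemma, every slice is weakly open, every slice is a trivial convex combination of slices, and every Daugavet point is a $\nabla$ point), and implication (1) is exactly Theorem~\ref{theo-charac-Daug-C0LX}(1). So the real content is in (iii), (iv), (v), each of which asks when the downward implication ``$f$ is $\Delta$ (resp.\ $\nabla$, resp.\ Daugavet) $\Longrightarrow$ $f$ attains its norm at a non-isolated point'' fails. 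The first step is always the same: if $f\in S_{C_0(L,X)}$ does \emph{not} attain its norm at a non-isolated point, then by Theorem~\ref{theo-charac-Daug-C0LX} the set $T$ of isolated points at which $\|f(\cdot)\|=\|f\|_\infty$ is finite and non-empty, and the diametral behaviour of $f$ is governed by the finite sum $C_0(T,X)=X\oplus_\infty\cdots\oplus_\infty X$ through items (2)(b)--(2)(f).

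For (iii), I would first show that a LUR space has no $\Delta$ points, and in fact no $\Delta_p$ point for any $p>1$. Fix $x\in S_X$, choose $x^*\in S_{X^*}$ with $\re x^*(x)=1$, and note that $x\in S(B_X,x^*,\alpha)$ for every $\alpha>0$; if $x$ were $\Delta_p$, then for each $\varepsilon>0$ there would be $u\in S(B_X,x^*,p\alpha)$ with $\|x-u\|\ge 2-\varepsilon$, forcing $\bigl\|\tfrac{x+u}{2}\bigr\|\ge\re x^*\bigl(\tfrac{x+u}{2}\bigr)>1-\tfrac{p\alpha}{2}$, hence $\delta_X(x,2-\varepsilon)\le\tfrac{p\alpha}{2}$; letting $\alpha\to 0$ contradicts $\delta_X(x,2-\varepsilon)>0$. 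Granting this, if $X$ is LUR and $f$ does not attain its norm at a non-isolated point, then none of the coordinate points $f(t_k)$ is $\Delta$ or $\Delta_{p_k}$, so $f$ is not $\Delta$ by Theorem~\ref{theo-charac-Daug-C0LX}(2)(d); that is implication (2). To see (2) may fail without LUR, take $X$ with a $\Delta$ point $x_0$ (e.g.\ any $X$ with the Daugavet property) and $L$ with an isolated point $t_0$: the function $f$ with $f(t_0)=x_0$ and norm $<1$ elsewhere is $\Delta$ by Theorem~\ref{theo-charac-Daug-C0LX}(2)(d) but attains its norm only at $t_0\notin L'$.

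For (iv) and (v), the ``if'' directions are routine. When $L=L'$, every $f$ attains its norm at a non-isolated point, so the conclusions of implications (3) and (4) hold for all $f$. When $X$ has no $\nabla$ point (resp.\ no Daugavet point), any $f$ not attaining its norm at a non-isolated point fails to be $\nabla$ (resp.\ Daugavet), since by Theorem~\ref{theo-charac-Daug-C0LX}(2)(b), (2)(e), (2)(f) that would force some $f(t_k)$ to be $\nabla$ (resp.\ Daugavet) in $X$. For the ``only if'' directions I would argue contrapositively, assuming $L\ne L'$ and fixing an isolated point $t_0\in L$. If $X$ has a Daugavet point $x_0$, then $f$ with $f(t_0)=x_0$ and small norm elsewhere is Daugavet in $C_0(L,X)$ by Theorem~\ref{theo-charac-Daug-C0LX}(2)(b), contradicting (4) — and, since Daugavet points are $\nabla$ points, (3) as well. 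If $X$ has a $\nabla$ point $x_0$ that is not Daugavet (hence strongly exposed), the same construction need not give a $\nabla$ point, because Theorem~\ref{theo-charac-Daug-C0LX}(2)(e) identifies $\nabla$ with Daugavet whenever $L\ne T$; the lift is then only possible through item (2)(f), requiring $L$ itself to be a finite discrete space, with copies of $x_0$ placed on all the coordinates.

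I expect the $\nabla$ case of (iv) to be the main obstacle. Since a strongly exposed $\nabla$ point of $X$ can be lifted to a $\nabla$ point of $C_0(L,X)$ only when $L$ itself is finite and discrete (by item (2)(e)), the ``only if'' direction requires tracking carefully, across all configurations of $L$ (a single isolated point; a finite discrete space; finitely many isolated points among limit points; an infinite discrete space), exactly which pairs $(L,X)$ admit a $\nabla$ point failing the norm condition, and confirming that this matches the stated equivalence. The remaining ingredients — the LUR estimate above (where the point is that the slice width $\alpha$ is a free parameter that can be sent to $0$) and the ``if'' directions — are comparatively routine once Theorem~\ref{theo-charac-Daug-C0LX} and the stability lemmas of Section~\ref{section:stability} (notably Lemmas~\ref{nabla-infty-sums}, \ref{daugavet-infty-sums}, and~\ref{delta-infty-sums}) are in hand.
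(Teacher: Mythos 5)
Your treatment of (i), (ii), and (v) is sound and matches the paper's: implication (1) is exactly Theorem~\ref{theo-charac-Daug-C0LX}(1), and for (v) the paper likewise splits off an isolated point, writes $C_0(L,X)=X\oplus_\infty Y$, and applies Lemma~\ref{daugavet-infty-sums}. For (iii) you take a genuinely different route. The paper argues directly: assuming $f$ is $\Delta$ but attains its norm only on the finite set $F$ of isolated points, it builds the norm-one rank-one projection $P(g)=\psi(g)\cdot f$ with $\psi(g)=\frac{1}{|F|}\sum_{t\in F}x_t^*(g(t))$ and uses local uniform rotundity to estimate $\|I-P\|<2$, contradicting the fact that $\Delta$ points obstruct property ($D$). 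Your route --- LUR spaces contain no $\Delta_p$ points for any $p>1$ (your slice computation with $\alpha\to 0$ is correct), hence neither alternative of Theorem~\ref{theo-charac-Daug-C0LX}(2)(d) can hold --- is shorter and equally valid for the theorem itself; but note that the paper's explicit projection is reused in Corollary~\ref{cor:c0lxequiv} to obtain ``property ($D$) implies no isolated points,'' so with your argument that corollary would need a separate justification.

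The genuine gap is exactly where you flagged it: the ``only if'' direction of (iv) when $X$ has a $\nabla$ point that is not Daugavet. Your instinct that such a point can only be lifted through item (2)(f), i.e.\ when $L=T$ is finite and discrete, is correct --- and if you push the case analysis through, the ``confirmation that this matches the stated equivalence'' fails. Take $L=\mathbb{N}\cup\{\infty\}$ and $X=\mathbb{R}$: then $L\neq L'$ and $X$ has $\nabla$ points ($\pm 1$, which are strongly exposed and not Daugavet), yet every $\nabla$ point of $C(L)=c$ attains its norm at $\infty\in L'$. Indeed, any $f\in S_c$ with $|\lim_n f(n)|<1$ has $T$ finite and $L\neq T$, so by Lemma~\ref{nabla-infty-sums}(1) it is $\nabla$ only if some coordinate $f(t_k)=\pm1$ is a Daugavet point of $\mathbb{R}$, which is impossible. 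Hence implication (3) holds for every $f$ while the right-hand side of (iv) is false; the correct dichotomy must distinguish whether $L'=\emptyset$ (where the relevant condition is ``$X$ has no $\nabla$ points'') from $L'\neq\emptyset$ with isolated points present (where it is ``$X$ has no Daugavet points''). The paper's own one-line proof of (iv), which simply invokes Lemmas~\ref{nabla-infty-sums} and \ref{daugavet-infty-sums} after splitting off an isolated point, does not address this case either, so the obstacle you identified reflects a defect of the statement as written rather than of your approach; but in any case your proposal does not close it.
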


\begin{proof}
Note that \textup{(i)} is well known, and \textup{(ii)} has been shown already in Theorem \ref{theo-charac-Daug-C0LX}. For \textup{(iii)}, given that $X$ is LUR, assume to the contrary that $f \in S_{C_0(L, X)}$ is a $\Delta$-point that does not attain its maximum norm at any limit point of $L$. Denote $F = \{t \in L : \|f(t)\|_X = 1\}$. The set $F$ contains only isolated points of $L$ and is nonempty. Fix $\epsilon_0 > 0$. Since $L$ is locally compact, we can find a compact subset $K:= K_f \subset L$ that contains $F$ and such that $\|f(t)\|_X < \epsilon_0$ for all $t \in L \setminus K$. Because of the compactness of $K$, the set $F$ must be finite (otherwise, there exists a limit point in $F$ where $f$ attains its norm). Also, for each $t \in F$, we can find $x_t^* \in S_{X^*}$ such that $x_t^*(f(t)) = 1$ by the Hahn-Banach theorem. Since $X$ is LUR, take $\delta:= \inf\left\{\delta_X\left(f(t), \frac{1}{2|F|}\right) : t \in F\right\}>0$.


If $\operatorname{Re}\,x_t^*(x) \geq 1 - \delta$ for $x \in B_X$, then 
\[
\left\|\frac{x + f(t)}{2}\right\|_X \geq \operatorname{Re}\,x_t^*\left(\frac{x + f(t)}{2}\right) \leq 1 - \frac{\delta}{2} < 1 - \delta_X\left(f(t), \frac{1}{2|F|}\right). 
\]
This implies that $\|x - f(t)\|_X < \frac{1}{2|F|}$. 

Let $\epsilon = 1 - \max_{t \in L \setminus F}\{\|f(t)\|_X, \epsilon_0\} > 0$ and define a bounded linear functional $\psi \in C_0(L, X)^*$ by 
\[
\psi(g) = \frac{1}{|F|}\sum_{t \in F}x_t^*(g(t)).
\]
Since $\psi(f) = 1$, we have $\|\psi\| = 1$. With this bounded linear functional, we define a norm-one, rank-one projection $P: C_0(L, X) \rightarrow C_0(L, X)$ by $P(g):= (\psi \otimes f)(g) = \psi(g) \cdot f$. 

To finish the proof, we show that the projection $P$ satisfies $\|I - P\| < 2$. For a given $g \in B_{C_0(L, X)}$, if $t \in L \setminus K$ we have 
\[
\|g(t) - Pg(t)\|_X = \|g(t) - \psi(g)(t)f(t)\|_X \leq 1 + \|f(t)\|_X < 2 - \epsilon.
\]
For $t \in K$ we consider two cases. First, if $|x_{t'}^*(g(t'))| \geq 1 - \delta$ for $t' \in F$ there exists $\alpha_{t'} \in S_{\mathbb{C}}$ such that $|x_{t'}^*(g(t'))| =  x_{t'}^*(\alpha_{t'}g(t')) \geq 1 - \delta$. Hence, $\|\alpha_{t'} g(t') - f(t')\|_X \leq \frac{1}{2|F|}$. So we obtain
\begin{eqnarray*}
\|g(t') - Pg(t')\|_X  &=& \left\|g(t') - \frac{1}{|F|}\sum_{t \in F}x_t^*\psi(g)(t')f(t')\right\|_X \\
&\leq& \left\|g(t') - \frac{1}{|F|}x_{t'}^*g(t')f(t')\right\|_X + \frac{|F|-1}{|F|}\\
&=& \left\|\alpha_{t'}g(t') - \frac{1}{|F|}x_{t'}^*(\alpha_{t'}g(t'))f(t')\right\|_X + \frac{|F|-1}{|F|}\\
&\leq& \|\alpha_{t'}g(t') - f(t')\|_X + \left\|f(t') - \frac{1}{|F|}x_{t'}^*(\alpha_{t'}g(t'))f(t')\right\|_X + \frac{|F|-1}{|F|}\\
&\leq& \frac{1}{2|F|} + 1 - \frac{1}{|F|}x_{t'}^*(\alpha_{t'}g(t')) + \frac{|F|-1}{|F|}\\
&\leq& 2 - \frac{1}{2|F|}.
\end{eqnarray*}

If $|x_{t'}^*g(t')| < 1- \delta$ for $t' \in F$, then we have 
\begin{eqnarray*}
\|g(t') - Pg(t')\|_X  &=& \left\|g(t') - \frac{1}{|F|}\sum_{t \in F}x_t^*\psi(g)(t')f(t')\right\|_X \\
&\leq& \left\|g(t') - \frac{1}{|F|}x_{t'}^*g(t')f(t')\right\|_X + \frac{|F|-1}{|F|}\\
&=& \left\|\alpha_{t'}g(t') - \frac{1}{|F|}x_{t'}^*(\alpha_{t'}g(t'))f(t')\right\|_X + \frac{|F|-1}{|F|}\\    
&<& 1 + \frac{1}{|F|}(1 - \delta) + \frac{|F|-1}{|F|} = 2 - \frac{\delta}{|F|}.
\end{eqnarray*}
Hence, we obtain $\|g - Pg\| \leq \max\{2 - \frac{1}{2|F|}, 2 - \frac{\delta}{|F|}, 2 - \epsilon\} < 2$ for every $g \in B_{C_0(L, X)}$, and so $\|I-P\| < 2$. However, since $f$ is a $\Delta$-point, this cannot happen by \cite[Lemma 2.1]{AHLP} (for complex Banach spaces, see \cite[Theorem 2.2]{LT24}). Therefore, $f$ must attain the norm at a limit point of $L$. 

For \textup{(iv)} and \textup{(v)} note that if $L=L'$, then $C_0(L,X)$ has Daugavet property, so every point is Daugavet. Otherwise, you can write $C_0(L, X)=X\oplus_\infty Y$ for some Banach space $Y$. Now the conclusion follows from Lemmas \ref{nabla-infty-sums} and \ref{daugavet-infty-sums}.
\end{proof}

Recall that a Banach space $X$ is said to have the \textit{polynomial Daugavet property} if every weakly compact polynomial $P \in \mathcal{P}(X; X)$ satisfies the Daugavet equation $\|\id+P\|=1+\|P\|$. If $X$ has the polynomial Daugavet property, then the space also has the Daugavet property. It is known that $C_0(L, X)$ has the polynomial Daugavet property if and only if either $X$ does or $L=L'$ (see \cite[Proposition 6.10]{CGMM08}). Thus, as a consequence of Theorem \ref{theo:implications-C0LX}, we get the following.

\begin{Corollary}\label{cor:c0lxequiv}
Let $L$ be a locally compact Hausdorff space and $X$ be a LUR Banach space. The following statements are equivalent:
\begin{enumerate}
\item $C_0(L, X)$ has the polynomial Daugavet property.
\item $C_0(L, X)$ has the Daugavet property
\item $C_0(L, X)$ has the restricted DSD2P.
\item $C_0(L, X)$ has the DD2P.
\item $C_0(L, X)$ has the DLD2P.
\item $C_0(L, X)$ has the property ($D$).
\item $L$ does not contain isolated points.
\end{enumerate}
\end{Corollary}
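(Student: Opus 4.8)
The plan is to prove the chain of equivalences in Corollary~\ref{cor:c0lxequiv} by exploiting the fact that, for LUR $X$, Theorem~\ref{theo:implications-C0LX} collapses all the diametral point notions. First I would observe that the implications $(1)\Rightarrow(2)\Rightarrow(3)\Rightarrow(4)\Rightarrow(5)\Rightarrow(6)$ among the seven listed properties are either trivial or already recorded in the introduction (the implications in the first displayed diagram of Section~\ref{section-introduction}, together with the fact that the polynomial Daugavet property implies the Daugavet property); so the real content is to close the loop by showing $(6)\Rightarrow(7)$ and $(7)\Rightarrow(1)$.

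For $(7)\Rightarrow(1)$: if $L$ has no isolated points, then $L=L'$, and by the remark after Theorem~\ref{theo-charac-Daug-C0LX} (or directly by part~(1) of that theorem, since \emph{every} $f\in S_{C_0(L,X)}$ attains its maximum norm at some point of $L=L'$), $C_0(L,X)$ has the Daugavet property; the upgrade to the polynomial Daugavet property is exactly \cite[Proposition 6.10]{CGMM08}. For $(6)\Rightarrow(7)$: I would argue by contraposition. Suppose $L$ has an isolated point $t_0$. The key point is that being LUR is inherited: the scalar function $e\in S_{C_0(L)}$ supported on $\{t_0\}$ — more precisely, fix any $x_0\in S_X$ and consider $f_0=x_0\chi_{\{t_0\}}$, which lies in $S_{C_0(L,X)}$ because $\{t_0\}$ is open and closed — and then write $C_0(L,X)=X\oplus_\infty C_0(L\setminus\{t_0\},X)$. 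Since $X$ is LUR and $C_0(L\setminus\{t_0\},X)$ need not be, I cannot conclude LUR of the whole space directly; instead I would use the already-proved equivalence between property~$(D)$ and the other six for this space provided by Theorem~\ref{theo:implications-C0LX}, namely that property~$(D)$ forces every $f\in S_{C_0(L,X)}$ with $\|I-(\psi\otimes f)\|=2$ to attain its norm at a limit point. Concretely: property~$(D)$ means every rank-one norm-one projection $P$ satisfies $\|I-P\|=2$; take $P(g)=x_0^*(g(t_0))\,f_0$ where $x_0^*\in S_{X^*}$ with $x_0^*(x_0)=1$. A direct computation (as in the proof of (iii) of Theorem~\ref{theo:implications-C0LX}, but now with the single isolated point $t_0$ and using LUR of $X$ at $x_0$) gives $\|I-P\|<2$, contradicting $(6)$. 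Hence no isolated point can exist, which is $(7)$.

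The main obstacle I anticipate is handling the interaction between the LUR hypothesis on $X$ and the non-LUR "tail" $C_0(L\setminus T,X)$ cleanly. The cleanest route is probably \emph{not} to reprove the projection estimate from scratch but to invoke Theorem~\ref{theo:implications-C0LX}\,(iii)–(v): part~(iii) gives $\big(f\text{ is }\Delta\big)\Rightarrow\big(f\text{ attains its norm at a limit point}\big)$ for LUR $X$, and combined with the standard characterization that $\Delta$-points and property~$(D)$ witnesses coincide for rank-one projections (via \cite[Lemma 2.1]{AHLP} / \cite[Theorem 2.2]{LT24}), one gets: if $C_0(L,X)$ has property $(D)$ then \emph{no} $f\in S_{C_0(L,X)}$ supported on a single isolated point can be a $\Delta$-point — but such an $f$ trivially \emph{is} a property-$(D)$-witness unless $\|I-P\|<2$, and the LUR estimate shows $\|I-P\|<2$, so property~$(D)$ is violated. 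So the logical skeleton is: $(7)$ fails $\Rightarrow$ there is an isolated-point-supported $f_0$ with a rank-one norm-one projection onto it having $\|I-P\|<2$ (this is the one computation, localized and short) $\Rightarrow$ $(6)$ fails.

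Putting it together, the write-up would be: "$(1)\Rightarrow(2)$ is immediate; $(2)\Rightarrow(3)\Rightarrow(4)\Rightarrow(5)\Rightarrow(6)$ is the standard implication chain; $(6)\Rightarrow(7)$ by the contrapositive argument above using LUR of $X$; and $(7)\Rightarrow(1)$ by \cite[Proposition 6.10]{CGMM08} together with Theorem~\ref{theo-charac-Daug-C0LX}(1)." I would keep the projection computation brief, pointing to the analogous, more elaborate computation already carried out in the proof of Theorem~\ref{theo:implications-C0LX}(iii), since the single-isolated-point case is strictly simpler (one term, $|F|=1$).
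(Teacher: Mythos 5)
Your proposal is correct and follows essentially the same route as the paper: the chain $(7)\Rightarrow(1)\Rightarrow(2)\Rightarrow\cdots\Rightarrow(6)$ is quoted as known (with $(7)\Rightarrow(1)$ from \cite[Proposition 6.10]{CGMM08}), and $(6)\Rightarrow(7)$ is obtained by observing that the operator $P$ constructed in the proof of Theorem \ref{theo:implications-C0LX}(iii) is a norm-one rank-one projection with $\|I-P\|<2$ whenever $L$ has an isolated point. Your specialization to a single isolated point ($|F|=1$) is just the simplest instance of that same computation.
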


\begin{proof}
Note that $\textup{(7)}\Rightarrow\textup{(1)}\Rightarrow\textup{(2)}\Rightarrow\textup{(3)}\Rightarrow\textup{(4)}\Rightarrow\textup{(5)}\Rightarrow\textup{(6)}$ were already known. Finally, that $\textup{(6)}\Rightarrow \textup{(7)}$ is implicitly shown in the proof of item \textup{(iii)} from Theorem \ref{theo:implications-C0LX}, as the operator $P$ that leads to the contradiction is a norm-one rank-one projection.
\end{proof}

\subsection{Results on \texorpdfstring{$\boldsymbol{A(K,X)}$}{A(K,X)} spaces}\hfill\\ \label{subsection:AKX}

Let $K$ be a compact Hausdorff topological space, and let $X$ be a Banach space. It was shown in \cite[Corollary 5.5]{LT22} that a point in a uniform algebra $f\in S_{A(K)}$ being Daugavet and $\Delta$ is equivalent to attaining the maximum norm at a limit point of the Shilov boundary. In the case of $A(K,X)$ spaces, a point on $S_{A(K, X)}$ attaining the maximum norm at a limit point of the Shilov boundary of the base algebra $A(K)$ is Daugavet, but the point being $\Delta$ does not imply this condition in general (since it does not for $C(K,X)$). However, if $X$ is uniformly convex (see \cite[Theorem 5.4]{LT22}), this implication holds. Also, note that uniform algebras over an infinite compact $K$ do not have strongly exposed points (see \cite[Main Theorem]{BW99}), so in this case $\nabla$ and Daugavet notions coincide (see \cite[Theorem 2.3]{HLPV} and also Proposition \ref{prop-nablas-complex}).

In this section we will see that all the results from Subsection \ref{subsection:C0LX} hold analogously for $A(K,X)$ spaces. To do so, we need some preliminary results. First, note that if the Shilov boundary of the base algebra has an isolated point, then we can write $A(K,X)=Y\oplus_\infty X$ for some Banach space $Y$. This is a consequence of the fact that characteristic functions of isolated points always belong to a uniform algebra.

\begin{lemma}[{\cite[Lemma 2.5]{LT22}}]
Let $A(K)$ be a uniform algebra on a compact Hausdorff space $K$ and let $t_0$ be an
isolated point of the Shilov boundary $\Gamma$ of A. Then there exists a function $\phi\in A(K)$ such that
$$\phi(t_0) = \|\phi\|=1\text{ and }\phi(t)=0\text{ for }t\in \Gamma\setminus \{t_0\}.$$
\end{lemma}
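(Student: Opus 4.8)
## Proof proposal

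The statement to prove is the cited Lemma from \cite{LT22}: if $t_0$ is an isolated point of the Shilov boundary $\Gamma$ of a uniform algebra $A(K)$, then there is $\phi \in A(K)$ with $\phi(t_0) = \|\phi\|_\infty = 1$ and $\phi \equiv 0$ on $\Gamma \setminus \{t_0\}$.

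\medskip

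\textbf{Plan.} The natural approach is to first produce \emph{some} function in $A(K)$ that peaks at $t_0$ and is small off a neighborhood of $t_0$, then iterate/power it to kill it completely on the rest of $\Gamma$. The key observation is that an isolated point of $\Gamma$ is automatically a strong boundary point: indeed, $\Gamma$ is the closure of the Choquet boundary $\Gamma_0$ (the set of strong boundary points), and if $t_0$ is isolated in $\Gamma$ then $t_0$ cannot be a limit of points of $\Gamma_0 \setminus \{t_0\}$, so $t_0 \in \Gamma_0$ itself; hence $t_0$ is a strong boundary point. Taking $U = \Gamma \setminus (\Gamma \setminus \{t_0\}) = \{t_0\}$, which is open in $\Gamma$ since $t_0$ is isolated, the strong boundary point property gives a function $f \in S_{A(K)}$ with $f(t_0) = 1$ and $r := \sup_{t \in \Gamma \setminus \{t_0\}} |f(t)| < 1$. (Here one works with $A(\Gamma) \cong A(K)$ to phrase the neighborhood condition on $\Gamma$; on a general open neighborhood $U$ of $t_0$ in $K$ one would get $\sup_{t \in K \setminus U}|f(t)| < 1$, but restricting to $\Gamma$ is what matters since $\|g\|_\infty = \sup_{t\in\Gamma}|g(t)|$ for all $g \in A(K)$.)

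\medskip

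\textbf{Main step: powering.} Now consider $g := \chi_{\{t_0\}}$, the characteristic function of $t_0$; since $t_0$ is isolated in $\Gamma$, I claim $g \in A(\Gamma)$, equivalently there is $\psi \in A(K)$ with $\psi(t_0) = 1$ and $\psi \equiv 0$ on $\Gamma \setminus \{t_0\}$ — which is exactly what we want. To get it, take the $f$ above and look at the sequence of powers $f^n \in A(K)$. On $\Gamma \setminus \{t_0\}$ we have $|f^n(t)| \le r^n \to 0$ uniformly, and $f^n(t_0) = 1$ for all $n$. Thus $f^n$ converges uniformly on $\Gamma$ to the function that is $1$ at $t_0$ and $0$ elsewhere; since $A(\Gamma)$ is a \emph{closed} subalgebra of $C(\Gamma)$ and uniform convergence on $\Gamma$ is convergence in the norm of $A(\Gamma)$, the limit $\psi := \lim_n f^n$ lies in $A(\Gamma) \cong A(K)$. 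This $\psi$ satisfies $\psi(t_0) = 1$, $\|\psi\|_\infty = 1$, and $\psi(t) = 0$ for every $t \in \Gamma \setminus \{t_0\}$, which is the desired $\phi$.

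\medskip

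\textbf{Where the difficulty lies.} There is essentially no hard computation; the only genuinely delicate points are the two "soft" identifications. First, one must justify that $t_0$ isolated in $\Gamma$ forces $t_0 \in \Gamma_0$ — this uses the density of $\Gamma_0$ in $\Gamma$ (stated in the Preliminaries) together with the fact that an isolated point which is in the closure of a set must belong to that set. Second, one must be careful that the convergence $f^n \to \psi$ takes place in $A(K)$ and not merely pointwise: this is where the closedness of the uniform algebra and the isometric identification $A(\Gamma) \cong A(K)$ (quoted from \cite[Theorem 4.1.6]{Leibowitz}) are used, and where one must note that $f^n(t_0) = 1$ stays put so the sup over $\Gamma \setminus \{t_0\}$ is what governs the rate of convergence. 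Modulo these, the proof is a two-line argument (peak function $+$ powering).
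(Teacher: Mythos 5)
Your argument is correct. Note that the paper itself offers no proof of this lemma --- it is quoted verbatim from \cite[Lemma 2.5]{LT22} --- but your route (isolated point of $\Gamma=\overline{\Gamma_0}$ must lie in the dense Choquet boundary $\Gamma_0$, hence is a strong boundary point; take a peak function $f$ with $f(t_0)=1$ and $\sup_{\Gamma\setminus\{t_0\}}|f|=r<1$; then $f^n$ is uniformly Cauchy on $\Gamma$, hence Cauchy in the norm of the closed algebra $A(K)$, and its limit is the desired $\phi$) is the standard proof of this fact. The only point worth stating more explicitly is the passage from ``$\{t_0\}$ open in $\Gamma$'' to the strong boundary condition, which is phrased for neighborhoods open in $K$: one should pick $U$ open in $K$ with $U\cap\Gamma=\{t_0\}$ and observe $\Gamma\setminus\{t_0\}\subset K\setminus U$, so that $r\le\sup_{K\setminus U}|f|<1$; your parenthetical via $A(\Gamma)\cong A(K)$ accomplishes the same thing, so this is a matter of exposition rather than a gap.
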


We need one more tool. Note that the proof of \cite[Theorem 4.2]{MPZpre}, like many other geometrical properties of $C(K)$ spaces from the literature, heavily relies on using the Urysohn's lemma. However, in the context of uniform algebras $A(K)$, this lemma does not hold in general, since the functions generated by the lemma may not be in $A(K)$ (see \cite{CGK13}). To adapt Theorem \ref{theo-charac-Daug-C0LX} to $A(K,X)$ spaces we will use the following Urysohn-type lemma from \cite{CGK13}.

\begin{Lemma}[{\cite[Lemma 2.5]{CGK13}}]\label{lemma:urysohn-type}
Let $K$ be a compact Hausdorff topological space, $A_u(K)$ be a (real or complex) uniform algebra over $K$, and let $\Gamma_0$ be its Choquet boundary. Then, for every open set $U\subset K$ with $U\cap \Gamma_0\neq \emptyset$ and $0<\varepsilon<1$, there exists $f\in A_u(K)$ and $t_0\in U\cap \Gamma_0$ such that $f(t_0)=\|f\|_\infty=1$, $|f(t)|<\varepsilon$ for every $t\in K\setminus U$, and for all $t\in U$, we have $|f(t)|+(1-\varepsilon)|1-f(t)|\leq 1$.
\end{Lemma}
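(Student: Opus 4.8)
The statement to prove is the Urysohn-type Lemma~\ref{lemma:urysohn-type}, which we want to establish for an arbitrary (real or complex) uniform algebra $A_u(K)$ with Choquet boundary $\Gamma_0$: given an open set $U$ meeting $\Gamma_0$ and $0<\varepsilon<1$, produce $f\in A_u(K)$ and $t_0\in U\cap\Gamma_0$ with $f(t_0)=\|f\|_\infty=1$, $|f(t)|<\varepsilon$ off $U$, and $|f(t)|+(1-\varepsilon)|1-f(t)|\le 1$ on all of $U$. The plan is to start from the \emph{definition} of a point of the Choquet boundary as a strong boundary point (recalled in the preliminaries: for every neighborhood $V$ of $t_0$ there is $g_V\in S_{A_u(K)}$ with $g_V(t_0)=1$ and $\sup_{t\in K\setminus V}|g_V(t)|<1$). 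Fix any $t_0\in U\cap\Gamma_0$; such a point exists because the Choquet boundary is dense in the Shilov boundary and $U$ is open and meets $\Gamma_0$. Apply the strong-boundary-point property with $V=U$ to get $g\in S_{A_u(K)}$ with $g(t_0)=1$ and $\sup_{t\in K\setminus U}|g(t)|=:\rho<1$.

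First I would upgrade $g$ by taking a power: replacing $g$ with $g^N$ keeps $g^N(t_0)=1$, $\|g^N\|_\infty=1$, and makes $\sup_{t\in K\setminus U}|g^N(t)|=\rho^N$ as small as we like — in particular $<\varepsilon$. So after this step we may assume $|g(t)|<\varepsilon$ for all $t\in K\setminus U$. The remaining issue is the third, more delicate inequality on $U$: we need a function whose values on $U$ lie in the region $\{z\in\overline{\mathbb D}:\,|z|+(1-\varepsilon)|1-z|\le 1\}$, which is a lens-shaped region pinched at $1$ and bulging toward $0$. The natural fix is to post-compose $g$ with a suitable holomorphic (or, in the real case, real-analytic / polynomial) self-map $\psi$ of the disk that fixes $1$, sends $0$ near $0$, and squeezes the whole disk into this lens. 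Concretely one can try $\psi(z)=\big(\tfrac{1+z}{2}\big)^k$ for large $k$, or a Möbius-type map carrying $\overline{\mathbb D}$ into a small disk tangent to the unit circle at $1$; such a map has $\psi(1)=1$, $|\psi(z)|\le 1$ with equality only at $z=1$, and for $k$ large the image is contained in any prescribed horodisk at $1$, which one checks is contained in the lens region. Then $f:=\psi\circ g$ lies in $A_u(K)$ (uniform algebras are closed under composition with functions holomorphic on a neighborhood of the image, and polynomials certainly act), satisfies $f(t_0)=\psi(1)=1=\|f\|_\infty$, and $|f(t)|+(1-\varepsilon)|1-f(t)|\le 1$ on all of $K$, hence on $U$; and off $U$ we still have $|f(t)|\le|\psi(g(t))|$ small — one just has to arrange the bookkeeping so that both the ``$<\varepsilon$ off $U$'' estimate and the lens estimate hold simultaneously, choosing the power $N$ on $g$ and the parameter $k$ in $\psi$ in the right order.

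The main obstacle I anticipate is purely the explicit geometry of the last step: one must pin down a concrete $\psi$ and verify the elementary but slightly fussy plane inequality $|\psi(z)|+(1-\varepsilon)|1-\psi(z)|\le 1$ for $z\in\overline{\mathbb D}$, and then reconcile it with the off-$U$ smallness. In the complex case this is clean because we have all of $H^\infty(\mathbb D)$-type maps at our disposal. In the real case one has to be a little careful that the composing function has real coefficients (so the composition stays in the real algebra) — using a real polynomial like $\big(\tfrac{1+z}{2}\big)^k$ handles this, since it has real coefficients and fixes $1$. A cleaner route that sidesteps choosing $\psi$ by hand is to argue directly: apply the strong-boundary-point definition, pass to a high power to kill the values off $U$, and then observe that for $z$ in a small disk around $1$ of radius $\delta$, the quantity $|z|+(1-\varepsilon)|1-z|$ is at most $1+\delta$... which is not quite $\le 1$, so this naive version fails and genuinely forces the use of a tangential map rather than a concentric one — i.e.\ the image must approach $1$ \emph{from inside}, tangentially to $\partial\mathbb D$, which is exactly what the power $\big(\tfrac{1+z}{2}\big)^k$ does. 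Once that geometric picture is fixed, the rest is routine; since the lemma is quoted from \cite{CGK13}, I would in practice just cite it, but the above is how I would reconstruct the proof.
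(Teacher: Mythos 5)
The paper does not actually prove this lemma; it is quoted verbatim from \cite[Lemma 2.5]{CGK13}, so your reconstruction can only be judged on its own terms. Your skeleton is the right one and is essentially how the cited proof goes: pick a strong boundary point $t_0\in U\cap\Gamma_0$, get a peaking-type function $g$ with $g(t_0)=\|g\|_\infty=1$ and $\sup_{K\setminus U}|g|<1$, raise it to a power to kill the values off $U$, and post-compose with a self-map of $\overline{\mathbb D}$ that fixes $1$ and carries $\overline{\mathbb D}$ into the Stolz set $\operatorname{St}_\varepsilon=\{z:\,|z|+(1-\varepsilon)|1-z|\le 1\}$. (Also note the real case needs no work: a real uniform algebra is all of $C(K)$ by Stone--Weierstrass, and the classical Urysohn lemma gives a $[0,1]$-valued $f$ for which $f(t)+(1-\varepsilon)(1-f(t))=(1-\varepsilon)+\varepsilon f(t)\le 1$ is automatic.)

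The gap is precisely in the step you dismissed as fussy but routine: the concrete maps you propose do not land in $\operatorname{St}_\varepsilon$. Writing $z=1-w$, the inequality $|1-w|+(1-\varepsilon)|w|\le 1$ forces $(1-\varepsilon)|w|\le\operatorname{Re}w$ (because $|1-w|\ge 1-\operatorname{Re}w$), so near $1$ the set $\operatorname{St}_\varepsilon$ lies inside a nontangential cone whose axis is the radius at $1$. A horodisk at $1$ approaches $1$ tangentially to $\partial\mathbb D$ and hence is \emph{not} contained in $\operatorname{St}_\varepsilon$; the same is true of the image of $\overline{\mathbb D}$ under $\psi(z)=\bigl(\tfrac{1+z}{2}\bigr)^k$ for every $k$: for $z=e^{i\theta}$ one has $\psi(z)=e^{ik\theta/2}\cos^k(\theta/2)$, whence $|1-\psi(z)|\sim k|\theta|/2$ while $\operatorname{Re}(1-\psi(z))\sim (k^2+k)\theta^2/8$, so the cone condition fails as $\theta\to0$. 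Your closing assertion that ``the image must approach $1$ tangentially to $\partial\mathbb D$, which is exactly what the power does'' is exactly backwards: tangential approach is what the Stolz set forbids. The standard repair is to compose with a map built for the target rather than a polynomial chosen by hand: $\operatorname{St}_\varepsilon$ is a convex body (a sublevel set of the convex function $|z|+(1-\varepsilon)|1-z|$) with $1$ on its boundary and $0$ in its interior, so the Riemann map $\phi:\mathbb D\to\operatorname{int}\operatorname{St}_\varepsilon$ extends by Carath\'eodory to a homeomorphism of $\overline{\mathbb D}$ onto $\operatorname{St}_\varepsilon$; normalize $\phi(1)=1$ and $\phi(0)=0$. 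Since $\phi$ lies in the disk algebra it is a uniform limit of polynomials on $\overline{\mathbb D}$, so $f:=\phi\circ g^N\in A_u(K)$, and choosing $N$ so large that $\phi$ maps the disk of radius $\sup_{K\setminus U}|g|^N$ into $\{|z|<\varepsilon\}$ yields all three required properties (indeed the Stolz inequality then holds on all of $K$, not merely on $U$).
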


In order to apply that lemma for this setting, we need a geometric result for which we provide an elementary proof.

\begin{lemma}\label{lemma:stolzing}
Let $\varepsilon\in (0,1)$. Then for every complex number $x$ in the Stolz region
$$\operatorname{St}_\varepsilon:=\{z\in \bbc:\, |z| + (1-\varepsilon) |1-z|\leq 1\},$$
there is some $y\in [0,1]\subset \bbc$ such that
$$|x -y|\leq \sqrt{\frac{1}{4(1-\varepsilon)^2}-\frac{1}{4}}.$$
\end{lemma}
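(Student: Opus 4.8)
The plan is to work out what the Stolz region $\operatorname{St}_\varepsilon$ looks like and then, for a given $x\in \operatorname{St}_\varepsilon$, exhibit an explicit nearest (or near-nearest) point $y\in[0,1]$. First I would identify the boundary curve $|z|+(1-\varepsilon)|1-z|=1$ as an ellipse with foci $0$ and $1$: writing $a:=1-\varepsilon$, the defining equation $|z|+a|1-z|=1$ is (after rescaling) the locus of points whose weighted distances to the two foci sum to a constant, which up to the affine change of variables $z\mapsto z$ is an ellipse passing through $0$ and $1$ (indeed $z=0$ and $z=1$ both satisfy it). The key geometric fact is that this ellipse is symmetric about the real axis, so for $x=u+iv\in \operatorname{St}_\varepsilon$ I would take $y:=\operatorname{Re}(x)=u$ if $u\in[0,1]$, and $y:=0$ or $y:=1$ otherwise; then $|x-y|$ is controlled by the maximal "height" of the region above the real axis, i.e. by $\max\{|\!\operatorname{Im} z| : z\in \operatorname{St}_\varepsilon\}$ together with the horizontal overshoot past $[0,1]$, and I would argue that the worst case is governed by the semi-minor axis of the ellipse.

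Next I would compute the semi-minor axis explicitly. Parametrize the ellipse with foci $0,1$ (center $1/2$, focal distance $c=1/2$) and sum of distances equal to... here one must be slightly careful because of the weight $a=1-\varepsilon$ in front of $|1-z|$; writing the equation as $|z|=1-a|1-z|$ and squaring twice turns $|z|^2=1-2a|1-z|+a^2|1-z|^2$ into a genuine conic. Setting $z=u+iv$ and $|1-z|^2=(1-u)^2+v^2$, $|z|^2=u^2+v^2$, one gets after simplification a relation of the form $(1-a^2)(u^2+v^2) - 2a^2(1-u)\cdot(\text{something}) = \dots$; rather than grind this out I would just locate the topmost point by symmetry ($u=1/2$ there) and solve $\sqrt{1/4+v^2}+a\sqrt{1/4+v^2}=1$, i.e. $(1+a)\sqrt{1/4+v^2}=1$, giving $1/4+v^2 = 1/(1+a)^2$ and hence $v^2 = \frac{1}{(1+a)^2}-\frac14 = \frac{1}{(2-\varepsilon)^2}-\frac14$. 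Wait — this gives the bound in terms of $2-\varepsilon$, not $1-\varepsilon$; the point is that the target bound $\sqrt{\frac{1}{4(1-\varepsilon)^2}-\frac14}$ is \emph{weaker} (larger) than this, since $1/(1+a)^2 < 1/a^2$, so it suffices to bound the distance from $x$ to $[0,1]$ by the semi-minor axis height, and then by monotonicity replace $1/(1+a)^2$ by the larger $1/(4(1-\varepsilon)^2)\cdot 4 = 1/(1-\varepsilon)^2$... let me restate: it suffices to show $|x-y|^2 \le \frac{1}{(2-\varepsilon)^2}-\frac14$, which is $\le \frac{1}{4(1-\varepsilon)^2}-\frac14$.

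So the clean line of argument is: (1) show $\operatorname{St}_\varepsilon$ is the closed elliptical region with foci $0,1$ determined by $|z|+(1-\varepsilon)|1-z|\le 1$; (2) by the reflection symmetry across $\mathbb{R}$, for $x\in\operatorname{St}_\varepsilon$ the real point $y\in[0,1]$ closest to $x$ (namely $\operatorname{Re}x$ clamped to $[0,1]$) satisfies $|x-y|\le$ the semi-minor axis length $b$, because the region is contained in the horizontal strip $\{|\operatorname{Im}z|\le b\}$ \emph{and} in the vertical strip between the two vertical tangent lines, and a short case check on whether $\operatorname{Re}x\in[0,1]$ or not handles the "corners"; (3) compute $b^2 = \frac{1}{(2-\varepsilon)^2}-\frac14$ by evaluating the defining equation at the top of the ellipse $z=\frac12+ib$; (4) conclude using $\frac{1}{(2-\varepsilon)^2}\le \frac{1}{4(1-\varepsilon)^2}$, which holds since $2(1-\varepsilon)\le 2-\varepsilon$ for $\varepsilon\in(0,1)$.

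The main obstacle I anticipate is step (2): making rigorous that the distance from an arbitrary interior point $x$ to the segment $[0,1]$ is bounded by $b$, including the case $\operatorname{Re}(x)\notin[0,1]$ where the nearest point of $[0,1]$ is an endpoint $0$ or $1$. There one needs that near the foci the ellipse is "thin", so that a point of $\operatorname{St}_\varepsilon$ with $\operatorname{Re}(x)<0$ is still within distance $b$ of the origin — this should follow because the portion of the ellipse with $\operatorname{Re}z\le 0$ is contained in a disc of radius $b$ about $0$ (the leftmost vertex of the ellipse is at a point $z=-r$ with $r\le b$? in fact the left vertex is on the real axis, and one checks the major axis endpoint nearest $0$; here a direct estimate from $|z|+(1-\varepsilon)|1-z|\le 1$ with $\operatorname{Re}z\le0$ forcing $|z|$ small suffices). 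I would handle this with an elementary estimate rather than invoking ellipse geometry: if $\operatorname{Re}(x)\le 0$ then $|1-x|\ge 1$ wait that's false in general — instead $|1-x|\ge |1-\operatorname{Re}x|\ge 1$, hence $|x|\le 1-(1-\varepsilon)|1-x|\le 1-(1-\varepsilon)=\varepsilon$, and $\varepsilon^2 \le \frac{1}{(2-\varepsilon)^2}-\frac14$? Not obviously — but $\varepsilon$ is small, and in fact one can instead just bound $|x-y|$ with $y=0$ directly: $|x-0|=|x|\le\varepsilon$, and separately $\varepsilon \le \sqrt{\frac{1}{4(1-\varepsilon)^2}-\frac14}$ needs checking, which reduces to $4\varepsilon^2(1-\varepsilon)^2 \le 1-(1-\varepsilon)^2=\varepsilon(2-\varepsilon)$, i.e. $4\varepsilon(1-\varepsilon)^2\le 2-\varepsilon$, true on $(0,1)$. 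So every case reduces to a one-variable polynomial inequality in $\varepsilon$, and assembling these cleanly is the only real work.
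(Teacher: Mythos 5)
There is a genuine gap at the heart of your step (3). The curve $|z|+(1-\varepsilon)|1-z|=1$ is \emph{not} an ellipse: a constant \emph{weighted} sum of distances to two points defines a Cartesian oval (a quartic curve), not a conic, and no rescaling or affine change of variables repairs this. In particular, the region $\ste$ is symmetric about the real axis but \emph{not} about the vertical line $\re z=\tfrac12$, so its topmost point does not lie over $\re z=\tfrac12$. Consequently your claimed containment of $\ste$ in the strip $\{|\im z|\le b\}$ with $b^2=\frac{1}{(2-\varepsilon)^2}-\frac14$ is false: for $\varepsilon=\tfrac12$ one has $b=\sqrt{7}/6\approx 0.441$, yet $z=0.25+0.49i$ satisfies $|z|+\tfrac12|1-z|\approx 0.998\le 1$, so $z\in\operatorname{St}_{1/2}$ while $|\im z|=0.49>b$. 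Your ``stronger'' intermediate bound is therefore not a bound at all, and the main case $\re x\in[0,1]$ of your argument collapses. (The endpoint cases are fine: your estimate $|x|\le\varepsilon$ for $\re x\le 0$ and the polynomial inequality $4\varepsilon(1-\varepsilon)^2\le 2-\varepsilon$ are correct, and $\re x\ge 1$ is trivial since it forces $x=1$.)

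The missing idea --- and the one the paper uses --- is to first \emph{enlarge} $\ste$ to a genuine ellipse by weakening the first term: since $(1-\varepsilon)|z|\le|z|$, every $z\in\ste$ satisfies $|z|+|1-z|\le\frac{1}{1-\varepsilon}$, which is an honest ellipse with foci $0$ and $1$, major semiaxis $M_\varepsilon=\frac{1}{2(1-\varepsilon)}$ and minor semiaxis $m_\varepsilon=\sqrt{M_\varepsilon^2-\tfrac14}=\sqrt{\frac{1}{4(1-\varepsilon)^2}-\frac14}$ --- exactly the constant in the lemma (which explains why the stated bound involves $1-\varepsilon$ rather than your $2-\varepsilon$). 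This ellipse \emph{is} symmetric about $\re z=\tfrac12$, so your strip-plus-clamping argument applies to it verbatim; one then checks, as you essentially do for the endpoints or as the paper does via the observation $m_\varepsilon>M_\varepsilon-\tfrac12$, that the ellipse lies in the $m_\varepsilon$-neighbourhood of the segment $[0,1]$. With that single substitution your outline becomes a correct proof; without it, the ``semi-minor axis'' you compute at $\re z=\tfrac12$ measures the wrong curve at the wrong point.
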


\begin{proof}
Let $\varepsilon\in (0,1)$, and let $x\in \ste$. First note that, trivially, the region $\ste$ is entirely contained in the ellipse given by
$$\left\{z\in \bbc:\, (1-\varepsilon)|z| + (1-\varepsilon)|1-z|\leq 1\right\},$$
that is,
$$\left\{z\in \bbc:\, |z-0| + |z-1|\leq \frac{1}{1-\varepsilon} \right\},$$
with foci on the points $0$ and $1$. Since the major semiaxis measures
$$M_\varepsilon:=\frac{1}{2\left( 1-\varepsilon \right)},$$
the minor semiaxis measures
$$m_\varepsilon:= \sqrt{\left( \frac{1}{2\left( 1-\varepsilon \right)} \right)^2 - \left( \frac{1}{2} \right)^2}.$$
Note that if $a,b,a-b,a+b>0$, then clearly
$$\sqrt{a^2-b^2}=\sqrt{a-b} \sqrt{a+b} > a-b,$$
so in particular,
$$m_\varepsilon>M_\varepsilon-\frac{1}{2}.$$
Therefore, this ellipse is entirely contained in the convex hull $E:=\overline{\conv}(B_1\cup B_2)$, where $B_1=B(0, m_\varepsilon)$ and $B_2=B(1, m_\varepsilon)$ are balls with radius $m_\varepsilon$ centered on the foci (see Figure \ref{fig:stolz-ellipse-2}). This shows that there exists $y\in [0,1]$ such that $|x-y|\leq m_\varepsilon$, and in fact this point can be taken explicitly as follows
$$\begin{cases}
y=0,\quad &\text{if }\re(x)<0,\\
y=\re(x),\quad &\text{if }\re(x)\in [0,1],\\
y=1,\quad &\text{if }\re(x)>1.
\end{cases}$$
\begin{figure}[H]
\centering
\includegraphics[width=8cm]{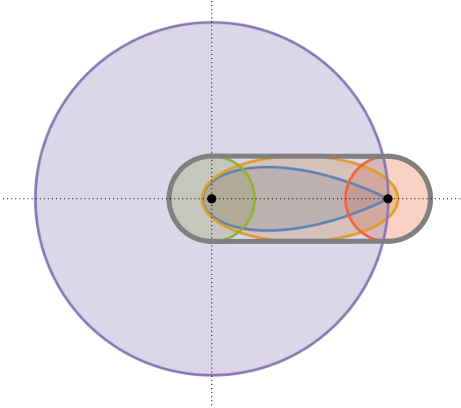}
\caption{Stolz region (blue) is contained in the ellipse (orange), which is contained in the convex hull (gray) of the small balls centred at the foci (green and red).\qedhere}
\label{fig:stolz-ellipse-2}
\end{figure}
\end{proof}





We are ready to state and prove the analogous result of Theorem \ref{theo-charac-Daug-C0LX} for $A(K,X)$ spaces. 





\begin{theorem}\label{theo-charac-Daug-AKX}
Let $K$ be a compact Hausdorff topological space, let $X$ a Banach space, and let $f\in S_{A(K, X)}$. Let $\Gamma$ be the Shilov boundary of the base algebra $A(K)$.
\begin{enumerate}
\item If $f$ attains its norm at some non-isolated point $t_0\in \Gamma'$, then $f$ is a ccs-Daugavet point.
\item Otherwise, let $T:=\{t_1,\ldots,t_n\}\subset \Gamma$ ($n\in\bbn$) be the collection of isolated points $t_k$ of $\Gamma$ where $f$ attains its norm, which is finite and non-empty, and we have the following.
\begin{enumerate}
\item $f$ is super-Daugavet (resp. super-$\Delta$, or ccs-Daugavet) if and only if $f|_T$ is super-Daugavet (resp. super-$\Delta$, or ccs-Daugavet) in $A(T,X)=X\oplus_\infty\stackrel{n)}{\cdots}\oplus_\infty X$.
\item $f$ is Daugavet if and only if there is some $t_0\in T$ such that $f|_{\{t_0\}}$ is Daugavet in $A(\{t_0\}, X)=X$.
\item If $f$ is ccs-$\Delta$, then $f|_T$ is ccs-$\Delta$ in $A(T,X)=X\oplus_\infty\stackrel{n)}{\cdots}\oplus_\infty X$.
\item $f$ is $\Delta$ if and only if either there is some $t_k\in T$ such that $f|_{\{t_k\}}$ is $\Delta$ in $A(\{t_k\}, X)=X$, or for each $\{p_k\}_{k=1}^n\subset (1,+\infty)$ with $\sum_{k=1}^n \frac{1}{p_k}=1$, there exists some $k\in\{1,\ldots,n\}$ such that $f|_{\{t_k\}}$ is $\Delta_{p_k}$ in $A(\{t_{k}\}, X)=X$. 
\item If $\Gamma\neq T$, or if $A(K,X)$ is a complex Banach space, $f$ is $\nabla$ if and only if $f$ is Daugavet.
\item If $\Gamma=T$, then $f$ is $\nabla$ if and only if either $f|_{\{t_i\}}$ is $\nabla$ in $A(\{t_i\}, X)=X$ for all $1\leq i\leq n$, or there is some $t_0\in T$ such that $f|_{\{t_0\}}$ is Daugavet in $A(\{t_0\}, X)=X$ (and in this case, $f$ is Daugavet as well).
\end{enumerate}
\end{enumerate}
\end{theorem}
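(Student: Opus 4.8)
The plan is to follow the blueprint of Theorem \ref{theo-charac-Daug-C0LX}. Part (2) will be essentially a transcription of the proof of Theorem \ref{theo-charac-Daug-C0LX}(2), so all the genuinely new content lies in part (1), where Urysohn's lemma---unavailable in a general uniform algebra---is replaced by the Urysohn-type Lemma \ref{lemma:urysohn-type}, and the two defects this substitution introduces must be absorbed.

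\textit{Part (2).} Pass first to $A(\Gamma,X)$, which is isometrically isomorphic to $A(K,X)$ (by \cite[Theorem 4.1.6]{Leibowitz} and its vector-valued analogue). Under the hypothesis of part (2), $f$ attains its norm on $\Gamma$---indeed $\|f\|=\max_{t\in\Gamma}\|f(t)\|_X$, since $x^*\circ f$ lies in the base algebra for every $x^*\in B_{X^*}$---and it does so only at isolated points of $\Gamma$; compactness then forces the set $T=\{t_1,\dots,t_n\}$ of such points to be finite and nonempty, as an infinite such set would cluster at a non-isolated point of $\Gamma$ at which $f$ would also attain its norm. Since the characteristic functions of isolated points of $\Gamma$ belong to the base algebra (\cite[Lemma 2.5]{LT22}), the norm-one projection $g\mapsto g-\sum_{k=1}^n\phi_k g(t_k)$ yields an isometric $\oplus_\infty$-decomposition $A(\Gamma,X)=Y\oplus_\infty X^n$, where $Y$ is the closed subspace of functions vanishing on $T$ and $X^n$ is the $\oplus_\infty$-sum of $n$ copies of $X$; moreover $f=(f|_{\Gamma\setminus T},f|_T)$ with $\|f|_{\Gamma\setminus T}\|<1$. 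Items (a)--(f) now follow word for word from the corresponding parts of the proof of Theorem \ref{theo-charac-Daug-C0LX}(2), invoking Lemmas \ref{super-infty-sums}, \ref{ccs-Daugavet-infty-sums}, \ref{daugavet-infty-sums}, \ref{ccs-Delta-infty-sums}, \ref{delta-infty-sums}, \ref{nabla-infty-sums} and Proposition \ref{prop-nablas-complex}.

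\textit{Part (1).} Fix a convex combination of slices $C=\sum_{i=1}^m\lambda_i S_i$ of $B_{A(K,X)}$, choose $g_i\in S_i$ for each $i$, and fix $n\in\bbn$. Imitating the proof of Theorem \ref{theo-charac-Daug-C0LX}(1) and using only normality of $K$, one builds a strictly decreasing chain $K=U_0\supset\overline{U_1}\supset U_1\supset\overline{U_2}\supset\cdots$ of open neighborhoods of $t_0$, with two extra bookkeeping conditions: each $U_n$ is small enough that $\|f(t)-f(t_0)\|_X<1/n$ and $\|g_i(t)-g_i(t_0)\|_X<1/n$ on $U_n$ for all $i$; and, because $t_0\in\Gamma'$ and the Choquet boundary $\Gamma_0$ of the base algebra is dense in $\Gamma$, a point $q_n\in(U_n\cap\Gamma_0)\setminus\{t_0\}$ is fixed when $U_n$ is chosen, and $U_{n+3}$ is later chosen small enough that $\overline{U_{n+3}}\not\ni q_n$. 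Then the open ``annulus'' $V_n:=U_n\setminus\overline{U_{n+3}}$ meets $\Gamma_0$ (it contains $q_n$) and omits $t_0$, so Lemma \ref{lemma:urysohn-type}, applied to the base algebra over $V_n$ with parameter $1/n$, produces $\phi_n$ in the base algebra and a peak point $t_n^*\in V_n\cap\Gamma_0$ with $\phi_n(t_n^*)=\|\phi_n\|_\infty=1$, with $|\phi_n(t)|<1/n$ for $t\notin V_n$, and with $\phi_n(t)$ in the Stolz region $\operatorname{St}_{1/n}$ for $t\in V_n$. Put
\[
g_{i,n}:=c_n\bigl(g_i-(f(t_n^*)+g_i(t_n^*))\,\phi_n\bigr),
\]
with $c_n\in(0,1]$ the largest scalar keeping $\|g_{i,n}\|\le1$. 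The remaining verifications are routine: (i) $g_{i,n}\in A(K,X)$, because $(f(t_n^*)+g_i(t_n^*))\phi_n\in A\otimes X\subset A(K,X)$; (ii) $c_n\to1$, the sole use of Lemma \ref{lemma:stolzing}, which replaces each value $\phi_n(t)$, $t\in V_n$, by a point of $[0,1]$ at distance $\to0$, so that the continuity estimates on $U_n$ give $\|g_i-(f(t_n^*)+g_i(t_n^*))\phi_n\|\le1+o(1)$; (iii) $\phi_n\to0$ pointwise on $K$ (immediate at $t_0$ and at any point outside $\bigcap_m\overline{U_m}$, and valid on $(\bigcap_m\overline{U_m})\setminus\{t_0\}$ since such points lie in no $V_n$), hence $\phi_n\to0$ weakly in the base algebra by dominated convergence, hence $g_{i,n}\to g_i$ weakly, so $g_{i,n}\in S_i$ for all $i$ and $h_n:=\sum_i\lambda_i g_{i,n}\in C$ once $n$ is large. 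Finally $\phi_n(t_n^*)=1$ gives the exact identity $g_{i,n}(t_n^*)=-c_n f(t_n^*)$, so $h_n(t_n^*)=-c_n f(t_n^*)$ and
\[
\|f-h_n\|\ge\|f(t_n^*)-h_n(t_n^*)\|_X=(1+c_n)\|f(t_n^*)\|_X\longrightarrow 2,
\]
because $c_n\to1$ and $\|f(t_n^*)\|_X\to\|f(t_0)\|_X=1$; thus $\sup_{h\in C}\|f-h\|=2$ and $f$ is ccs-Daugavet.

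The step I expect to be the main obstacle is the construction in part (1). Since the peaking functions must belong to the uniform algebra, honest bump functions are unavailable, and Lemma \ref{lemma:urysohn-type} only supplies functions that are small off an open set meeting the Choquet boundary and whose values lie in a Stolz region instead of in $[0,1]$. Both defects must be neutralized: the first by weaving the auxiliary points $q_n$ into the inductive choice of neighborhoods---which is exactly where the hypothesis $t_0\in\Gamma'$ and the density of $\Gamma_0$ in $\Gamma$ enter, and why the annuli are taken of the form $U_n\setminus\overline{U_{n+3}}$---and the second by the elementary geometry of Lemma \ref{lemma:stolzing}, whose error is driven to $0$ by letting the Urysohn parameter shrink. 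Everything else is a faithful transcription of the $C_0(L,X)$ arguments.
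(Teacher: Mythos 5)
Your proposal is correct and follows essentially the same route as the paper: part (2) via the isolated-point decomposition and the $\oplus_\infty$ stability lemmas, and part (1) via the Urysohn-type Lemma \ref{lemma:urysohn-type} combined with the Stolz-region estimate of Lemma \ref{lemma:stolzing} to build the weakly convergent perturbations $g_{i,n}$. The only (harmless) deviation is that you subtract $(f(t_n^*)+g_i(t_n^*))\phi_n$ rather than $(f(t_0)+g_i(t_0))\phi_n$, which yields the exact identity $h_n(t_n^*)=-c_nf(t_n^*)$ and lets you skip the norming functional $x^*$ in the final estimate.
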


\begin{proof}
We show \textup{(1)} with the help of Lemmas \ref{lemma:urysohn-type} and \ref{lemma:stolzing}. We will follow ideas based on the proof of \cite[Theorem 4.2]{MPZpre}, however, several steps and computations need to be done differently and carefully. The computations will be written in the context of complex Banach spaces, but they also work for the real case.

Let $t_0$ be a limit point of $\Gamma$ and $x^*\in S_{X^*}$ be such that $\re(x^*(f(t_0)))=\|f(t_0)\|=1$. Let $\Gamma_0$ be the Choquet boundary of the base algebra $A$. Let $C:=\sum_{i=1}^L \lambda_i S_i$ be a convex combination of slices of $B_{A(K,X)}$. For each $i\in\{1,\ldots,L\}$, pick a function $g_i\in S_i$. 

Note that $t_0$ is an accumulation point of $K$, and $K$ is compact, so arguing as in the proof of \cite[Theorem 4.2]{MPZpre}, there exists a sequence $\{U_n\}_{n=0}^{\infty}$ of open neighbourhoods of $t_0$ such that
\begin{itemize}
\item[1.] $U_0=K$,
\item[2.] $\overline{U_{n+1}}$ is a proper subset of $U_n$ for all $n\geq 0$,
\item[3.] $\re(x^*\circ f)|_{\overline{U_n}}\geq 1-\frac{1}{n}$ and $\|g_i|_{\overline{U_n}} - g_i(t_0)\| \leq \frac{1}{n}$ for all $i\in\{1,\ldots,L\}$ and all $n\geq 1$.
\end{itemize}

Moreover, note that $t_0$ is a limit point of $\Gamma=\overline{\Gamma_0}$, so since $K$ is normal, by Lemma \ref{lemma:urysohn-type}, there is a sequence of points $\{t_k\}_{k\in\bbn}\subset \Gamma_0$ and a subsequence $\{U_{n_k}\}_{k\in \bbn}$ of $\{U_n\}_{n\in\bbn}$ with $n_1>1$ such that
\begin{itemize}
\item[1.] $t_k$ converges to $t_0$,
\item[2.] for each $k\in\bbn$, there is an open neighbourhood of $t_k$, $V_k$, such that $\overline{V_k}\subset U_{n_k}\setminus \overline{U_{n_{k+1}}}$,
\end{itemize}
and for each $k\in\bbn$, there is $p_k\in B_{A}$ satisfying the following: 
\begin{itemize}
\item[1.] $|p_k(t)|<\frac{1}{n_k}$ for every $t\in K\setminus V_k$,
\item[2.] $p_k(t_k)=1$,
\item[3.] and for all $t\in V_k$, we have $|p_k(t)| + (1-\frac{1}{n_k}) \left|1-p_k(t)\right|\leq 1$.
\end{itemize}

Note that the sequence $\{p_k\}_{k\in\bbk}$ is uniformly bounded and converges pointwise to $0$, so it converges weakly to $0$ (see for instance the argument used in the proof of \cite[Theorem 4.2.(i)]{LT22}). We claim now that for each $k\in\bbn$ and each $i\in\{1,\ldots,L\}$, we have
\begin{equation}\label{Claim-Stolz}
\|g_i - (f(t_0)+g_i(t_0)) p_k\|_\infty \leq 1+\frac{2}{n_k} + 2R_{k},
\end{equation}
where $R_{k}$ is the positive number
$$R_{k}:=\sqrt{\left(\frac{1}{2\left(1-\frac{1}{n_k}\right)}\right)^2 -\left( \frac{1}{2} \right)^2}\quad \stackrel{k\to\infty}{\longrightarrow}\quad 0.$$
Indeed, on the one hand, note that for all $k\in\bbn$ and all $1\leq i\leq L$, if $t\in K\setminus V_k$, then the triangle inequality yields
$$\|g_i(t) - (f(t_0)+g_i(t_0))p_k(t)\|\leq \|g_i(t)\| + \|f(t_0)+g_i(t_0)\|\cdot|p_k(t)|\leq 1 + 2\frac{1}{n_k}.$$
On the other hand, note that for all $1\leq i\leq L$, if $t\in V_k$, we have
\begin{align*}
\|g_i(t) - (f(t_0)+g_i(t_0))p_k(t)\|&\leq \|g_i(t) - g_i(t_0)\| + \|g_i(t_0) - (f(t_0)+g_i(t_0))p_k(t)\|\\
&\leq \frac{1}{n_k} + \|g_i(t_0) - (f(t_0)+g_i(t_0))p_k(t)\|.
\end{align*}
Now, given $p_k(t)$, which is in the Stolz region $\operatorname{St}_{(1/n_k)}$, Lemma \ref{lemma:stolzing} shows that if $q_k(t)$ is the closest point to $p_k(t)$ in $[0,1]$, then $|p_k(t)-q_k(t)|\leq R_k$. Therefore,
\begin{align*}
\|g_i(t_0) - (f(t_0) &+ g_i(t_0))p_k(t)\| \\
&\leq
\|g_i(t_0)(1-q_k(t)) + (-f(t_0))q_k(t)\| + \|g_i(t_0) - f(t_0)\|\cdot |p_k(t)-q_k(t)|\\
&\leq 1 + 2R_k,
\end{align*}
since $g_i(t_0)$ and $-f(t_0)$ both belong to $B_X$. This shows that that \eqref{Claim-Stolz} holds. Therefore, for each $i\in\{1,\ldots,L\}$ and $k\in\bbn$, the function
$$g_{i,k}:=\frac{1}{1+\frac{2}{n_k} + 2R_{k}}(g_i - (f(t_0)+g_i(t_0))p_k)$$
belongs to $B_{A(K,X)}$, and the sequence $\{g_{i,k}\}_{k\in\bbn}$ converges weakly to $g_i$. Since the finitely many slices $S_i$ ($1\leq i\leq L$) are weakly open, we may find some $N\in \bbn$ such that for all $k\geq N$ and all $i\in\{1,\ldots,L\}$, $g_{i,k}\in S_i$. In particular, the function 
$$h_k:= \sum_{i=1}^L \lambda_i g_{i,k}$$
belongs to $C$ for all $k\geq N$. Fix $k\geq N$, and let us compute a lower bound for $\|f-h_k\|_\infty$. Consider the point $t_k\in V_k$. On the one hand, note that $\re(x^*(f(t_k)))\geq 1-\frac{1}{n_k}$. On the other hand, for each $i\in\{1,\ldots,L\}$,
\begin{align*}
\re(x^*(g_{i,k}(t_k)))&=\frac{1}{1+\frac{2}{n_k} + 2R_{k}} \re(x^*(g_i(t_k) - (f(t_0)+g_i(t_0))p_k(t_k)))\\
&=\frac{1}{1+\frac{2}{n_k} + R_{i,k}}\left( -1 + \re(x^*(g_i(t_k) - g_i(t_0))) \right)\\
&\leq \frac{1}{1+\frac{2}{n_k} + 2R_{k}} \left( -1 + \frac{1}{n_k} \right)= -1+\frac{2R_{k} + \frac{3}{n_k}}{1+\frac{2}{n_k}+2R_{k}}.
\end{align*}

Therefore, 
$$\|f-h_k\|_\infty \geq \re\left( x^*\left( f(t_k) - \sum_{i=1}^L \lambda_i g_{i,k}(t_k) \right)\right)\geq 2-\frac{1}{n_k} - \frac{2R_{k} + \frac{3}{n_k}}{1+\frac{2}{n_k}+2R_{k}}\stackrel{k\to\infty}{\longrightarrow}2.$$
Since this can be done for each $k\geq N$, we conclude that $f$ is a ccs-Daugavet point.

Finally, to see \textup{(2)} it suffices to argue as in the proof of Theorem \ref{theo-charac-Daug-C0LX}.(2).
\end{proof}



Once more, it is not true in general that these notions coincide in $A(K,X)$ spaces, but the situation is different if $X$ is LUR. In fact, we have the following.

\begin{Theorem}\label{theo:6notions-AKX}
Let $K$ be a compact Hausdorff topological space and let $X$ be a Banach space. Let $\Gamma$ be the Shilov boundary of the base algebra $A$ of $A(K, X)$. Given $f\in S_{A(K,X)}$, consider the following diagram of implications
\[
\begin{tikzcd}
    \begin{array}{c} \|f(t)\|=\|f\|_\infty\\ \text{ for some }t\in \Gamma' \end{array} \\
    {f\text{ ccs-Daugavet}} & {f\text{ super-Daugavet}} & {f\text{ Daugavet}} & {f\text{ }\nabla} \\
    {f\text{ ccs-}\Delta} & {f\text{ super-}\Delta} & {f\text{ }\Delta} & \begin{array}{c} \|f(t)\|=\|f\|_\infty\\ \text{ for some }t\in \Gamma' \end{array}
    \arrow["{1)}", from=1-1, to=2-1]
    \arrow[from=2-1, to=2-2]
    \arrow[from=2-1, to=3-1]
    \arrow[from=2-2, to=2-3]
    \arrow[from=2-2, to=3-2]
    \arrow[from=2-3, to=2-4]
    \arrow[from=2-3, to=3-3]
    \arrow["{4)}", from=2-3, to=3-4]
    \arrow["{3)}", from=2-4, to=3-4]
    \arrow[bend right=20, from=3-1, to=3-3]
    \arrow[from=3-2, to=3-3]
    \arrow["{2)}"', from=3-3, to=3-4]
\end{tikzcd}
\]
Then:
\begin{enumerate}
\item[(i)] The unnumbered implications are always true in every Banach space.
\item[(ii)] The implication \textup{(1)} always holds in $A(K,X)$.
\item[(iii)] While the implication \textup{(2)} is not always true in general, it is true if $X$ is LUR.
\item[(iv)] The implication \textup{(3)} holds for every $f\in S_{A(K,X)}$ if and only if either $\Gamma=\Gamma'$ or $X$ has no $\nabla$ points.
\item[(v)] The implication \textup{(4)} holds for every $f\in S_{A(K,X)}$ if and only if either $\Gamma=\Gamma'$ or $X$ has no Daugavet points.
\end{enumerate}
\end{Theorem}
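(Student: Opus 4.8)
The plan is to mirror the proof of Theorem~\ref{theo:implications-C0LX} almost verbatim, since items \textup{(i)} and \textup{(ii)} carry no new content: \textup{(i)} is the standard diagram of implications among the seven notions, and \textup{(ii)} is exactly Theorem~\ref{theo-charac-Daug-AKX}.\textup{(1)}. So the substance is concentrated in \textup{(iii)}, \textup{(iv)}, \textup{(v)}.

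For \textup{(iii)} I would first replace $A(K,X)$ by the isometric copy $A(\Gamma,X)$, so that the underlying compact space is $\Gamma$, which is its own Shilov boundary. Assuming $X$ is LUR, suppose for contradiction that $f\in S_{A(\Gamma,X)}$ is a $\Delta$-point not attaining its norm at any limit point of $\Gamma$. Put $F:=\{t\in\Gamma:\|f(t)\|_X=1\}$; using compactness of $\Gamma$ and continuity of $\|f(\cdot)\|_X$, one checks that $F$ is a finite nonempty set of isolated points and that $\epsilon:=1-\sup_{t\in\Gamma\setminus F}\|f(t)\|_X>0$ (both by the same accumulation argument: an accumulation of norm-one values, or of values approaching norm one, would land on a limit point of $\Gamma$ at which $f$ attains its norm). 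Pick $x_t^*\in S_{X^*}$ with $x_t^*(f(t))=1$ for $t\in F$, and set $\delta:=\inf\{\delta_X(f(t),\tfrac1{2|F|}):t\in F\}>0$ by LUR. Define the norm-one functional $\psi(g):=\tfrac1{|F|}\sum_{t\in F}x_t^*(g(t))$ on $A(\Gamma,X)$ — this is where we use that point evaluations at points of $\Gamma$ are bounded functionals, with $\|\psi\|=1$ because $\psi(f)=1$ — and let $P:=\psi\otimes f$. Then $P$ is a rank-one, norm-one projection of $A(\Gamma,X)$ into itself, the key point (and the place where the uniform-algebra setting behaves) being that $Pg=\psi(g)\,f$ is simply a scalar multiple of $f\in A(\Gamma,X)$. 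The estimate $\|I-P\|<2$ is then the same computation as in Theorem~\ref{theo:implications-C0LX}.\textup{(iii)}: on $\Gamma\setminus F$ one gets the bound $2-\epsilon$, while on $F$ one splits according to whether $|x_t^*(g(t))|\ge1-\delta$ (bound $2-\tfrac1{2|F|}$, via $\|\alpha g(t)-f(t)\|_X<\tfrac1{2|F|}$ for a suitable unimodular $\alpha$) or $|x_t^*(g(t))|<1-\delta$ (bound $2-\tfrac{\delta}{|F|}$). This contradicts $f$ being $\Delta$ by \cite[Lemma 2.1]{AHLP} (complex case: \cite[Theorem 2.2]{LT24}); that \textup{(2)} can fail without LUR is witnessed by any such example from the scalar-valued $C(K,X)$ literature.

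For \textup{(iv)} and \textup{(v)}: if $\Gamma=\Gamma'$, then every $f\in S_{A(K,X)}=S_{A(\Gamma,X)}$ attains its norm at a point of $\Gamma$ — indeed $\sup_{t\in\Gamma}\|f(t)\|_X=\|f\|_\infty$ because each $x^*\circ f$ lies in the base algebra and hence attains its modulus on $\Gamma$, and the sup over the compact $\Gamma$ is attained — so $f$ attains its norm at a limit point; by Theorem~\ref{theo-charac-Daug-AKX}.\textup{(1)} every $f$ is then ccs-Daugavet and $A(K,X)$ has the Daugavet property, so \textup{(3)} and \textup{(4)} hold trivially. If $\Gamma\neq\Gamma'$, fix an isolated point $t^*\in\Gamma$; by \cite[Lemma 2.5]{LT22} the characteristic function of $\{t^*\}$ lies in $A(K)$, which yields an isometric decomposition $A(K,X)=A(\Gamma,X)=X\oplus_\infty Y$. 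The equivalences in \textup{(iv)} and \textup{(v)} then follow from Lemmas~\ref{nabla-infty-sums} and \ref{daugavet-infty-sums} combined with Theorem~\ref{theo-charac-Daug-AKX} (to handle the finitely many isolated points at which $f$ attains its norm), exactly as in Theorem~\ref{theo:implications-C0LX}.\textup{(iv)},\textup{(v)}.

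The main obstacle is \textup{(iii)}. In $C_0(L,X)$ one has Urysohn's lemma available at every turn, whereas in a uniform algebra the continuous functions furnished by Urysohn need not lie in $A(K)$ (this is precisely why Lemmas~\ref{lemma:urysohn-type} and \ref{lemma:stolzing} are needed for Theorem~\ref{theo-charac-Daug-AKX}). The saving grace for item \textup{(iii)} is that no Urysohn-type tool is required at all: the contradicting projection $P$ is assembled purely from point evaluations at Shilov-boundary points together with scalar multiplication by $f$, both of which keep us inside $A(K,X)$; and the ``compact $K_f$'' trick of the locally compact setting is replaced by a single accumulation argument using compactness of $\Gamma$. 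Once $P$ is in hand, the numerical estimates are identical to the $C_0(L,X)$ case.
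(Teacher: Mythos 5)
Your proposal is correct and follows essentially the same route as the paper: (i)--(ii) are quoted, (iii) is the rank-one norm-one projection $P=\psi\otimes f$ built from point evaluations at the finitely many norm-attaining isolated points of $\Gamma$ together with the local modulus $\delta_X(f(t),\cdot)$ (the paper simply defers this to the proof of \cite[Theorem 5.4]{LT22} with that one modification, which is exactly the computation you reproduce from Theorem \ref{theo:implications-C0LX}(iii)), and (iv)--(v) use the $\oplus_\infty$-splitting of $A(K,X)$ at an isolated point of $\Gamma$ plus the stability lemmas, exactly as the paper does. Your remark that no Urysohn-type substitute is needed for (iii) because $P$ is assembled from point evaluations and scalar multiples of $f$ (hence stays inside $A(K,X)$) is precisely the reason the paper can transfer the LUR argument verbatim from the $C(K,X)$ setting.
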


\begin{proof}
\textup{(i)} is well known, and \textup{(ii)} has been shown in the proof of Theorem \ref{theo-charac-Daug-AKX}. To show \textup{(iii)} when $X$ is LUR, it suffices to follow verbatim the proof of \cite[Theorem 5.4]{LT22}, but instead of taking $\delta=\delta_X(1/|F|)$, we take 
$$\delta=\inf_{t\in F} \delta_X\left(f(t), \frac{1}{|F|} \right),$$
where 
$$\delta_X(x, \varepsilon):=\inf\left\{ 1-\left\|\frac{x+y}{2}\right\|:\, y\in B_X,\, \|x-y\|\geq\varepsilon \right\}$$
is the modulus of local rotundity at $x$.

Finally, \textup{(iv)} and \textup{(v)} can be derived from the stability results from Section \ref{section:stability} such as in the proof of Theorem \ref{theo:implications-C0LX}.
\end{proof}

As a consequence, we get that all 6 notions are equivalent in uniform algebras, which extends the conclusions from \cite[Corollary 5.5]{LT22}.

\begin{Corollary}\label{cor:charac-unif-algebras}
Let $K$ be a compact Hausdorff topological space, $\Gamma$ be the Shilov boundary of $A(K)$, and $f\in S_{A(K)}$. Then the following statements are equivalent:
\begin{enumerate}
\item There is a limit point $t_0$ of $\Gamma$ such that $\|f\|_\infty=|f(t_0)|$. 
\item $f$ is a ccs Daugavet point.
\item $f$ is a ccs $\Delta$ point.
\item $f$ is a super Daugavet point.
\item $f$ is a super $\Delta$ point.
\item $f$ is a Daugavet point.
\item $f$ is a $\Delta$ point.
\end{enumerate}
\end{Corollary}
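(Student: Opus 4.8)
The plan is to deduce Corollary \ref{cor:charac-unif-algebras} almost entirely from Theorems \ref{theo-charac-Daug-AKX} and \ref{theo:6notions-AKX} applied to the scalar-valued case $X=\bbc$ (the real uniform algebra being just $C(K)$ by Stone--Weierstrass, where the statement is already known from \cite[Corollary 4.3]{MPZpre}). The key observation that collapses everything is that for $X=\bbc$, the base algebra coincides with $A(K)$ itself, every point $f(t)$ of modulus $1$ is a scalar of modulus $1$, and $\bbc$ (being one-dimensional, hence finite-dimensional) has \emph{no} Daugavet points, no $\Delta$ points, no $\nabla$ points, and no super- or ccs- variants thereof; moreover $\bbc$ is uniformly convex, hence LUR. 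So in the one-element case $A(\{t_0\},\bbc)=\bbc$, the restriction $f|_{\{t_0\}}$ is never any of the six diametral points. This will make the dichotomy in Theorem \ref{theo-charac-Daug-AKX} degenerate precisely at the point we want.

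First I would establish $\textup{(1)}\Rightarrow\textup{(2)}$: this is exactly item \textup{(1)} of Theorem \ref{theo-charac-Daug-AKX}, which says that if $f$ attains its norm at a non-isolated point of $\Gamma$, then $f$ is ccs-Daugavet. Next, the chain $\textup{(2)}\Rightarrow\textup{(3)},\textup{(4)},\ldots,\textup{(7)}$ among the six diametral notions follows from the unnumbered implications in the diagram of Theorem \ref{theo:6notions-AKX} (item \textup{(i)}), which hold in every Banach space: ccs-Daugavet $\Rightarrow$ super-Daugavet $\Rightarrow$ Daugavet, ccs-Daugavet $\Rightarrow$ ccs-$\Delta$, etc., and one checks from that diagram that every one of (3)--(7) lies below (2). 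So it only remains to close the loop by proving that any one of (3)--(7) implies (1); by the diagram it suffices to show the weakest one, namely $\textup{(7)}\Rightarrow\textup{(1)}$, i.e.\ that if $f$ is a $\Delta$ point then $f$ attains its norm at a limit point of $\Gamma$.

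For $\textup{(7)}\Rightarrow\textup{(1)}$ I would argue by contradiction using the trichotomy of Theorem \ref{theo-charac-Daug-AKX}: suppose $f\in S_{A(K)}$ is a $\Delta$ point but does \emph{not} attain its norm at any non-isolated point of $\Gamma$. Then we are in case \textup{(2)} of that theorem, with $T=\{t_1,\dots,t_n\}$ the (finite, nonempty) set of isolated points of $\Gamma$ where $|f|=1$. By item \textup{(2)(d)}, $f$ being $\Delta$ forces either some $f|_{\{t_k\}}$ to be a $\Delta$ point of $A(\{t_k\})=\bbc$, or, for every $\{p_k\}_{k=1}^n\subset(1,\infty)$ with $\sum 1/p_k=1$, some $f|_{\{t_k\}}$ to be a $\Delta_{p_k}$ point of $\bbc$. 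But a one-dimensional space has no $\Delta$ points and no $\Delta_p$ points for any $p>1$ (the unit ball of $\bbc$ is a disk; a slice through a boundary point of small width has small diameter, so the diametral condition fails), so both alternatives are impossible. This contradiction shows $f$ must attain its norm at a non-isolated point of $\Gamma$, which is exactly \textup{(1)}.

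The only genuinely nontrivial input is thus Theorem \ref{theo-charac-Daug-AKX} itself, whose proof uses the Urysohn-type Lemma \ref{lemma:urysohn-type} and the Stolz-region estimate Lemma \ref{lemma:stolzing}; here we are merely harvesting its scalar specialization. I expect the only point needing care in the write-up to be the bookkeeping of \emph{which} arrows in the diagram of Theorem \ref{theo:6notions-AKX} witness each implication among (2)--(7), and the explicit verification that $\bbc$ has none of the diametral points or $\Delta_p$ points, which is an elementary remark about slices of the unit disk. Everything else is a direct citation.

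\begin{proof}
If $A(K)$ is a real uniform algebra, then $A(K)=C(K)$ by the Stone--Weierstrass theorem \cite[pg 122]{Rudin91}, and the equivalences are already known by \cite[Corollary 4.3]{MPZpre} together with \cite[Corollary 5.5]{LT22}. So assume $A(K)$ is a complex uniform algebra and apply Theorems \ref{theo-charac-Daug-AKX} and \ref{theo:6notions-AKX} with $X=\bbc$; note that then the base algebra is $A(K)$ itself, and $A(\{t\})=\bbc$ for every $t$.

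\textup{(1)}$\Rightarrow$\textup{(2)} is item \textup{(1)} of Theorem \ref{theo-charac-Daug-AKX}. The implications \textup{(2)}$\Rightarrow$\textup{(3)}, \textup{(2)}$\Rightarrow$\textup{(4)}$\Rightarrow$\textup{(5)}, \textup{(4)}$\Rightarrow$\textup{(6)}$\Rightarrow$\textup{(7)}, and \textup{(3)}$\Rightarrow$\textup{(7)} all follow from the unnumbered arrows in the diagram of Theorem \ref{theo:6notions-AKX}, which hold in every Banach space by item \textup{(i)} there. Hence it suffices to prove \textup{(7)}$\Rightarrow$\textup{(1)}.

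Suppose $f\in S_{A(K)}$ is a $\Delta$ point but \textup{(1)} fails, that is, $f$ does not attain its norm at any non-isolated point of $\Gamma$. Then we are in case \textup{(2)} of Theorem \ref{theo-charac-Daug-AKX}, so the set $T=\{t_1,\dots,t_n\}$ of isolated points of $\Gamma$ at which $|f|=1$ is finite and non-empty, and by item \textup{(2)(d)} either some $f|_{\{t_k\}}$ is a $\Delta$ point of $\bbc$, or for each $\{p_k\}_{k=1}^n\subset(1,+\infty)$ with $\sum_{k=1}^n \frac{1}{p_k}=1$ some $f|_{\{t_k\}}$ is a $\Delta_{p_k}$ point of $\bbc$. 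But $\bbc$ is one-dimensional: a slice $S(B_{\bbc}, z^*, \alpha)$ of the closed unit disk containing a unimodular point has diameter tending to $0$ as $\alpha\to 0^+$, so no unimodular point of $\bbc$ is a $\Delta$ point nor a $\Delta_p$ point for any $p>1$. Both alternatives are therefore impossible, a contradiction. Hence $f$ attains its norm at a non-isolated point of $\Gamma$, which is \textup{(1)}.
\end{proof}
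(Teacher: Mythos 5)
Your proof is correct. The forward direction and the chain of implications among (2)--(7) are harvested from Theorem \ref{theo-charac-Daug-AKX}(1) and the standard diagram, exactly as the paper intends. Where you diverge is in closing the loop, $\Delta\Rightarrow(1)$: the paper gets this from Theorem \ref{theo:6notions-AKX}(iii), i.e.\ the implication ``$\Delta$ implies norm-attainment at a limit point of $\Gamma$'' holds when $X$ is LUR (and $\bbk$ is trivially LUR), which is proved there via a norm-one rank-one projection $P$ with $\|I-P\|<2$. You instead invoke the $\oplus_\infty$-decomposition characterization of $\Delta$ points, Theorem \ref{theo-charac-Daug-AKX}(2)(d), and observe that the one-dimensional space $\bbk$ has no $\Delta$ and no $\Delta_p$ points (your slice computation $|x-u|\le\sqrt{2p\alpha}$ is the right elementary verification, and the finiteness and non-emptiness of $T$ are guaranteed because $\Gamma$ is a closed boundary). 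Both routes are legitimate deductions from results already established in the paper; yours trades the projection/property-$(D)$ argument for the $\Delta_p$ bookkeeping of the stability lemmas, which is arguably more self-contained at this point in the text but yields slightly less (the paper's LUR route is what also powers Corollary \ref{cor:akxequiv}). Your dispatch of the real case to $C(K)$ via Stone--Weierstrass is fine, though unnecessary: the same argument runs verbatim with $X=\bbr$.
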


In fact, $\nabla$ points are also equivalent to the other notions in all uniform algebras $A(K)$ except in the real and finite-dimensional case.

Another consequence of Theorem \ref{theo:6notions-AKX} is the following improvement of \cite[Proposition 4.15]{LT24}.

\begin{Corollary}\label{cor:akxequiv}
Let $K$ be a compact Hausdorff topological space, $X$ be LUR, and $\Gamma$ be the Shilov boundary for the base algebra of $A(K, X)$. Then the following statements are equivalent:
\begin{enumerate}
\item $\Gamma$ does not contain isolated points.
\item The space $A(K, X)$ has the polynomial Daugavet property.
\item The space $A(K, X)$ has the restricted DSD2P.
\item The space $A(K, X)$ has the DD2P.
\item The space $A(K, X)$ has the DLD2P.
\item The space $A(K, X)$ has the property ($\mathcal{D}$).
\end{enumerate}
\end{Corollary}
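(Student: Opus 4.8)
The plan is to mirror the proof of Corollary \ref{cor:c0lxequiv}, since all the geometric work has already been done in Theorems \ref{theo-charac-Daug-AKX} and \ref{theo:6notions-AKX}. Most of the implications are instances of the general diagram of diametral diameter two properties from Section \ref{section-introduction} or are already recorded in the literature; the only genuinely new ingredient is that $\textup{(6)}$ closes the cycle.

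Concretely, I would first note that $\textup{(1)}\Rightarrow\textup{(2)}$ is known: when $\Gamma$ has no isolated points, $A(K,X)$ has the polynomial Daugavet property (the analogue for $A(K,X)$ of \cite[Proposition 6.10]{CGMM08}; see also \cite{LT22,LT24}). Then $\textup{(2)}$ implies the Daugavet property, which via implications $(c)$, $(a)$, $(b)$, $(e)$ of the introductory diagram yields $\textup{(3)}$, $\textup{(4)}$, $\textup{(5)}$, $\textup{(6)}$ respectively; and $\textup{(1)}\Rightarrow\textup{(3)},\textup{(4)}$ as well, since by Theorem \ref{theo-charac-Daug-AKX} the condition $\Gamma=\Gamma'$ forces every $f\in S_{A(K,X)}$ to be (ccs-)Daugavet, so $A(K,X)$ has the Daugavet property. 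Finally $\textup{(3)}\Rightarrow\textup{(5)}$, $\textup{(4)}\Rightarrow\textup{(5)}$ and $\textup{(5)}\Rightarrow\textup{(6)}$ are implications $(d)$, $(b)$ and $(e)$ of that same diagram. Hence every one of $\textup{(2)},\dots,\textup{(6)}$ implies $\textup{(6)}$, and it only remains to prove $\textup{(6)}\Rightarrow\textup{(1)}$ to close the loop.

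For that last implication I would argue by contraposition, exactly as in item \textup{(iii)} of Theorem \ref{theo:6notions-AKX} (equivalently, as in the proof of \cite[Theorem 5.4]{LT22} with the modulus of local rotundity replacing the modulus of uniform convexity, which is legitimate precisely because $X$ is LUR). Assume $\Gamma$ has an isolated point $t_0$. By \cite[Lemma 2.5]{LT22} pick $\phi\in S_{A(K)}$ with $\phi(t_0)=1$ and $\phi\equiv 0$ on $\Gamma\setminus\{t_0\}$; fix $x_0\in S_X$ and $x_0^*\in S_{X^*}$ with $x_0^*(x_0)=1$, set $f:=\phi\otimes x_0\in S_{A(K,X)}$, define $\psi\in A(K,X)^*$ by $\psi(g):=x_0^*(g(t_0))$, and let $P:=\psi\otimes f$, which is a norm-one, rank-one projection (just as the operator $P(g)=\psi(g)\cdot f$ in the proof of Theorem \ref{theo:implications-C0LX}\textup{(iii)}). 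Running the estimate of \cite[Theorem 5.4]{LT22} with $\delta:=\delta_X(f(t_0),\varepsilon')$ for the appropriate $\varepsilon'$, splitting $\|g-Pg\|_\infty$ according to whether $t$ is close to $t_0$ and, at $t_0$, whether $|x_0^*(g(t_0))|$ is close to $1$, one obtains $\|I-P\|<2$, so $A(K,X)$ fails property $(D)$.

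I do not expect a real obstacle here — it is essentially a bookkeeping exercise once Theorems \ref{theo-charac-Daug-AKX} and \ref{theo:6notions-AKX} are available. The two points requiring a little care are: (a) checking that $f=\phi\otimes x_0$ attains its norm on all of $K$ only at $t_0$ (so that $\varepsilon:=1-\sup_{t\neq t_0}\|f(t)\|_X>0$ and the ``$t$ far from $t_0$'' case of the estimate really closes) — this follows, e.g., from a representing-measure argument on $\Gamma$ using that $A(K)$ separates the points of $K$, or directly from the structure of the proof of \cite[Theorem 5.4]{LT22}; and (b) verifying that the passage from uniform convexity to mere local uniform rotundity in that estimate is valid, which is exactly what taking $\delta$ from the local modulus $\delta_X(f(t_0),\cdot)$ (here with the norm-attainment set a single point) accomplishes, in parallel with Theorem \ref{theo:6notions-AKX}\textup{(iii)}.
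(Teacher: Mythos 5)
Your proposal is correct and follows essentially the same route as the paper: the paper states this corollary as a direct consequence of Theorem \ref{theo:6notions-AKX}, with the chain $\textup{(1)}\Rightarrow\textup{(2)}\Rightarrow\cdots\Rightarrow\textup{(6)}$ coming from known implications and $\textup{(6)}\Rightarrow\textup{(1)}$ extracted from the rank-one norm-one projection built in the proof of Theorem \ref{theo:6notions-AKX}\textup{(iii)} (i.e., the LUR adaptation of \cite[Theorem 5.4]{LT22}), exactly as in Corollary \ref{cor:c0lxequiv}. Your extra care about where $f=\phi\otimes x_0$ attains its norm is sound and is handled by passing to the isometric restriction to $\Gamma$, as the paper does throughout.
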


\subsection{Results on \texorpdfstring{$\boldsymbol{L_\infty(\mu, X)}$}{L\textinfty(mu,X)} spaces}\hfill\\ \label{subsection:Linfty(mu,X)}

In this section, we will study the notions in $L_\infty(\mu, X)$ spaces, where $(\Omega, \Sigma, \mu)$ is a measure space. Recall that if $\mu$ is a measure, then $L_\infty(\mu)$ is isometric to $C(K)$ for some compact $K$, and thus in these spaces all diametral point notions (including $\nabla$ except in the real and finite-dimensional case) are equivalent. However, to the authors' knowledge, there are no known conditions that characterize these points in $L_\infty(\mu)$ spaces. Furthermore, much less is known about the vector-valued $L_\infty(\mu, X)$ spaces, where the situation is often different from the scalar-valued case. 

In \cite[Theorem 2.5]{MV03}, it was shown that if $\mu$ is $\sigma$-finite, then the space $L_\infty(\mu, X)$ has Daugavet property if and only if either $\mu$ is atomless or $X$ has Daugavet property. Here we will provide a characterization of all the $\Delta$, Daugavet, and $\nabla$ points in $L_\infty(\mu, X)$ spaces, as well as some conditions regarding the other notions. One of the consequences of our results is that \cite[Theorem 2.5]{MV03} still holds even when $\mu$ is an arbitrary measure, where the forward implication comes from Lemma \ref{lemma-Daugavet-Down-infty-sum}.

First we characterize Daugavet points in $L_\infty(\mu, X)$ spaces.

\begin{theorem}\label{theo:charac-Daug-points-Linfty}
Given $(\Omega, \Sigma, \mu)$ a measure space and $X$ a Banach space, let $f\in S_{L_\infty(\mu, X)}$. 
\begin{enumerate}
\item If for every $\varepsilon>0$ there is an infinite collection of disjoint sets $\{B_n\}_{n=1}^\infty$ of positive measure such that
$$\|f(t)\|_X>1-\frac{\varepsilon}{2} \text{ for each }t\in B_n,\text{ and each }n\in\bbn,$$
then $f$ is a Daugavet point.
\item Otherwise, the collection of atoms $\{A_j\}_{j\in J}$ such that $\|f|_{A_j}\|=1$ for all $j\in J$ is finite and non-empty, and $f$ is Daugavet if and only if there is $j_0\in J$ such that $f|_{A_{j_0}}$ is Daugavet in $L_\infty(\mu|_{A_{j_0}}, X)=X$.
\end{enumerate}
\end{theorem}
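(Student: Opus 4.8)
The plan is to treat the two cases separately, in both cases reducing to the known characterization of Daugavet points via $\Delta_\varepsilon$-sets, i.e. using that $f$ is Daugavet if and only if $B_{L_\infty(\mu,X)}=\overline{\operatorname{co}(\Delta_\varepsilon(f))}$ for every $\varepsilon>0$, or equivalently that every slice of the unit ball meets $\Delta_\varepsilon(f)$.

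For item \textup{(1)}, fix $\varepsilon>0$ and a slice $S(g^*,\alpha)$ of $B_{L_\infty(\mu,X)}$ with $g\in S(g^*,\alpha)$; I want to produce $h\in S(g^*,\alpha)$ with $\|f-h\|_\infty\geq 2-\varepsilon$. The idea is to use the infinitely many disjoint sets $\{B_n\}$ of positive measure on which $\|f\|_X>1-\varepsilon/2$: since the $B_n$ are disjoint and $g^*$ is a bounded functional, for all but finitely many $n$ the ``mass'' of $g^*$ carried by $B_n$ is arbitrarily small (this is the measure-theoretic analogue of the fact used for $C(K,X)$ that a functional cannot give large weight to infinitely many disjoint pieces). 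Pick such an $n$, pick $t_0\in B_n$ (really a positive-measure subset $B\subset B_n$) and, using Hahn–Banach, a functional $x^*\in S_{X^*}$ with $\operatorname{Re} x^*(f(t))$ close to $1$ on a positive-measure subset of $B_n$; then modify $g$ on that subset by setting it equal to (a suitable unimodular rotation of) $-f/\|f\|_X$ there, leaving $g$ unchanged off $B_n$. The modification changes $g^*$ by at most the small mass on $B_n$, so the modified function $h$ is still in the slice, while on $B_n$ we have $\|f(t)-h(t)\|_X\geq 2-\varepsilon$, hence $\|f-h\|_\infty\geq 2-\varepsilon$. Since this works for every slice and every $\varepsilon$, $f$ is Daugavet.

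For item \textup{(2)}, first observe that if the hypothesis of \textup{(1)} fails, then for some $\varepsilon_0>0$ the set $\{t:\|f(t)\|_X>1-\varepsilon_0/2\}$ contains only finitely many disjoint pieces of positive measure, which forces this set (up to measure zero) to be a finite union of atoms; since $\|f\|_\infty=1$, the sub-collection $\{A_j\}_{j\in J}$ of atoms with $\|f|_{A_j}\|=1$ is finite and nonempty. Writing $\Omega$ (modulo null sets) as the disjoint union of the $A_j$ ($j\in J$) and the remainder $\Omega'$, we get the isometric decomposition $L_\infty(\mu,X)=\big[\bigoplus_{j\in J}X\big]_\infty\oplus_\infty L_\infty(\mu|_{\Omega'},X)$, with $\|f|_{\Omega'}\|<1$ and $f|_{A_j}\in S_X$ for each $j$. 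Now apply Lemma \ref{daugavet-infty-sums} (the stability of Daugavet points under finite $\oplus_\infty$-sums): $f$ is Daugavet in this sum if and only if one of the unit-norm coordinates $f|_{A_{j_0}}$ is Daugavet in $X$, which is exactly the claimed statement. (One should note $L_\infty(\mu|_{A_{j_0}},X)=X$ isometrically since $A_{j_0}$ is an atom.)

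The main obstacle is the argument in item \textup{(1)}: making precise the claim that a fixed functional $g^*\in L_\infty(\mu,X)^*$ can carry large mass on only finitely many of the disjoint sets $B_n$, and that perturbing $g$ on a small piece keeps it inside the given slice. The cleanest route is probably to observe that $g^*$ restricted to the (separable-range) situation behaves like a finitely additive measure with values in $X^*$, so $\sum_n |g^*|(B_n)$ being controlled forces $|g^*|(B_n)\to 0$; alternatively, one can replace $g^*$ by a weak$^*$-nearby functional of the form $\sum_{k=1}^m x_k^*\,\chi_{E_k}$-type and argue directly, since slices are determined up to an arbitrarily small error by such functionals. Everything else is routine bookkeeping with the triangle inequality and Hahn–Banach.
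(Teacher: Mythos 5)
Your item (2) is essentially the paper's argument: the failure of the hypothesis in (1) forces the set $\{t:\|f(t)\|_X>1-\varepsilon_0/2\}$ to be a finite union of atoms, whence $J$ is finite and nonempty, $\|f|_{\Omega'}\|_\infty<1$ on the complement, and Lemma \ref{daugavet-infty-sums} applied to the decomposition $L_\infty(\mu,X)=\bigl[\bigoplus_{j\in J}X\bigr]_\infty\oplus_\infty L_\infty(\mu|_{\Omega'},X)$ finishes the proof. For item (1), however, you take a genuinely different route. The paper never touches the dual of $L_\infty(\mu,X)$: it uses the characterization $B_{L_\infty(\mu,X)}=\overline{\co}(\Delta_\varepsilon(f))$ and, for an arbitrary $h\in B_{L_\infty(\mu,X)}$ and $k$ of the disjoint sets $C_1,\dots,C_k$, forms $g_i:=h\chi_{\Omega\setminus C_i}-f\chi_{C_i}$; each $g_i$ lies in $\Delta_\varepsilon(f)$ and $\|h-\frac1k\sum_i g_i\|\leq 2/k$, so no functional ever appears. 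You instead verify the slice definition directly, which forces you to control how much ``mass'' a fixed $g^*\in L_\infty(\mu,X)^*$ can put on the sets $B_n$. That step, which you rightly flag as the main obstacle, does go through: setting $m_n:=\sup\{|g^*(h\chi_{B_n})|:h\in B_{L_\infty(\mu,X)}\}$ and summing norm-one selections supported on finitely many of the disjoint $B_n$ gives $\sum_n m_n\leq\|g^*\|$, hence $m_n\to 0$; then taking $g$ deep enough in the slice and replacing it by $g\chi_{\Omega\setminus B_n}-f\chi_{B_n}$ for $n$ with $2m_n$ small keeps you in the slice while achieving $\|f-h\|_\infty\geq 2\|f|_{B_n}\|\geq 2-\varepsilon$. (Your detour through Hahn--Banach and a unimodular rotation of $-f/\|f\|_X$ is unnecessary: $\|f(t)-(-f(t))\|_X=2\|f(t)\|_X$ already suffices. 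Your alternative suggestion of approximating $g^*$ by simple-function-type functionals is the one I would avoid, since $L_\infty(\mu,X)^*$ is not well behaved under such approximation.) The trade-off is that the paper's averaging trick is shorter and proves the formally stronger convex-hull statement with no duality, while your slice argument is more hands-on and makes explicit why infinitely many disjoint near-maximizing sets are needed: a single functional can only see finitely many of them.
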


\begin{proof}
\textup{(1)}: We follow the steps of the proof of \cite[Theorem 2.5]{MV03}. Recall that $f$ is a Daugavet point if and only if for every $\varepsilon>0$, $B_{L_\infty(\mu, X)}= \overline{\co}(\Delta_\varepsilon(f))$. Let $\varepsilon>0$ and let $\{B_n\}_{n=1}^{\infty}$ be a family of pairwise disjoint measurable subsets of $\Omega$. Fix $k\in\bbn$, and choose $k$ distinct elements $C_1,\ldots,C_k$ of  $\{B_n\}_{n=1}^{\infty}$. Fix any $h\in B_{L_\infty(\mu, X)}$. For each $1\leq i\leq k$, define $g_i:= h\cdot \chi_{\Omega\setminus C_i} - f\cdot \chi_{C_i}$. If $t\in C_i$ for some $1\leq i\leq k$, then
$$\left\| h(t) - \frac{1}{k}\sum_{j=1}^k g_j(t)\right\|_X\leq \frac{1}{k}\|h(t)+f(t)\|_X\leq \frac{2}{k},$$
and if $t\notin \bigcup_{i=1}^k C_i$, then $h(t) = \frac{1}{k}\sum_{j=1}^k g_j(t)$. Finally, note that for all $1\leq i\leq k$, $\|f-g_i\|>2-\varepsilon$, so $g_i\in \Delta_\varepsilon(f)$. This shows that $f$ is a Daugavet point.

\textup{(2)}: Note that if the condition of \textup{(1)} is not satisfied, there is no atomless set $S\subset \Omega$ such that $\|f|_S\|=1$, and the collection $\{A_j\}_{j\in J}$ cannot be infinite or empty, so $0<|J|<\infty$. Let us denote $B=\bigcup_{j\in J} A_j$. Then we can split $L_\infty(\mu, X)=L_\infty(\mu|_{\Omega\setminus B}, X)\oplus_\infty L_\infty(\mu|_B, X)$ and $L_\infty(\mu|_B, X)=X\oplus_\infty \stackrel{|J|}{\cdots}\oplus_\infty X$. Hence, by Lemma \ref{daugavet-infty-sums} we obtain the result. 
\end{proof}

As a consequence, we have the following result.

\begin{corollary}[{see \cite[Theorem 2.5]{MV03} for $\sigma$-finite measures}]\label{cor:Linfty-Daugavet-Property}
Given a measure space $(\Omega, \Sigma, \mu)$ and $X$ a Banach space, the space $L_\infty(\mu, X)$ has Daugavet property if and only if either $\mu$ is atomless or $X$ has Daugavet property.
\end{corollary}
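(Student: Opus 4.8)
The plan is to derive this corollary directly from Theorem \ref{theo:charac-Daug-points-Linfty}, exactly as is typical when passing from a pointwise characterization to a global property: a Banach space $Z$ has the Daugavet property if and only if every $f \in S_Z$ is a Daugavet point. So I must show that \emph{every} $f \in S_{L_\infty(\mu, X)}$ is a Daugavet point precisely when $\mu$ is atomless or $X$ has the Daugavet property.

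For the backward implication, I would split into the two cases. If $\mu$ is atomless, then for \emph{any} $f \in S_{L_\infty(\mu, X)}$ and any $\varepsilon>0$, the set $\{t:\ \|f(t)\|_X > 1-\varepsilon/2\}$ has positive measure (by definition of the essential supremum norm), and since $\mu$ is atomless this set can be partitioned into infinitely many disjoint subsets of positive measure; hence condition (1) of Theorem \ref{theo:charac-Daug-points-Linfty} applies and $f$ is Daugavet. If instead $X$ has the Daugavet property, take any $f \in S_{L_\infty(\mu, X)}$. Either the set $\{t:\ \|f(t)\|_X>1-\varepsilon/2\}$ can be split into infinitely many disjoint positive-measure pieces for every $\varepsilon>0$ (so (1) applies), or we are in case (2) of the theorem, where the collection of atoms $\{A_j\}_{j\in J}$ with $\|f|_{A_j}\|=1$ is finite and nonempty; picking any $j_0\in J$, the point $f|_{A_{j_0}}$ lies in $S_X$ (after normalizing $\chi_{A_{j_0}}$) and is Daugavet because $X$ has the Daugavet property, so $f$ is Daugavet by item (2). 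In both cases every $f$ is Daugavet, so $L_\infty(\mu,X)$ has the Daugavet property.

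For the forward implication, suppose $L_\infty(\mu, X)$ has the Daugavet property and that $\mu$ is \emph{not} atomless; I must show $X$ has the Daugavet property. Since $\mu$ has an atom $A$ of finite measure, we can write $L_\infty(\mu, X) = L_\infty(\mu|_A, X) \oplus_\infty L_\infty(\mu|_{\Omega\setminus A}, X)$, and $L_\infty(\mu|_A, X) = X$ isometrically. Now apply Lemma \ref{lemma-Daugavet-Down-infty-sum}: the Daugavet property passes down from $Z = X \oplus_\infty Y$ to $X$. Hence $X$ has the Daugavet property, completing the proof. (Alternatively, one could feed a fixed $x \in S_X$ into the atom, obtain a point in $S_{L_\infty(\mu,X)}$, and use the failure of the ``split into infinitely many pieces'' condition together with item (2) of Theorem \ref{theo:charac-Daug-points-Linfty} to transfer Daugavet-ness of that point back to $x$; but the $\oplus_\infty$-stability lemma is cleaner.)

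I do not anticipate a genuine obstacle here, since the heavy lifting is already done in Theorem \ref{theo:charac-Daug-points-Linfty} and Lemma \ref{lemma-Daugavet-Down-infty-sum}. The only point requiring a little care is the atomless case of the backward direction: one must make sure that in an atomless measure space, a set of positive measure really does admit a countably infinite partition into positive-measure pieces — this is standard (repeatedly halve, using atomlessness at each step, or invoke Sierpiński's theorem on the range of an atomless measure), but it is the one place where the hypothesis ``atomless'' is used in an essential, non-formal way. Everything else is bookkeeping with the dichotomy of Theorem \ref{theo:charac-Daug-points-Linfty} and the identification of the atom-part of $L_\infty(\mu,X)$ with $X$.
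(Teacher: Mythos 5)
Your proposal is correct and follows essentially the same route as the paper: the atomless case via condition (1) of Theorem \ref{theo:charac-Daug-points-Linfty}, the case where $X$ has the Daugavet property via its item (2), and the forward implication via the $\oplus_\infty$-splitting off an atom together with Lemma \ref{lemma-Daugavet-Down-infty-sum}. You merely spell out the case analysis that the paper's two-line proof leaves implicit.
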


\begin{proof}
If $\mu$ is atomless, then every point $f\in S_{L_\infty(\mu, X)}$ satisfies the condition of Theorem \ref{theo:charac-Daug-points-Linfty}.\textup{(1)}. Hence $L_\infty(\mu, X)$ has Daugavet property. For the remaining implications it suffices to apply Theorem \ref{theo:charac-Daug-points-Linfty}.\textup{(2)} together with Lemma \ref{lemma-Daugavet-Down-infty-sum}.
\end{proof}


\begin{example}\label{example:ell_infty}
Theorem \ref{theo:charac-Daug-points-Linfty}.(1) provides an easy characterization of Daugavet points in $L_\infty(\mu)$ spaces (and thus, of every other notion as well). In particular, a point $x=(x_1, x_2, \ldots)\in S_{\ell_\infty}$ is a ccs-Daugavet point (equivalently, a $\Delta$ point) if and only if $\limsup_{n\in\bbn} |x_n|=1$.
\end{example}

Now, we can study all the notions as we did in $C_0(L,X)$ and $A(K,X)$ spaces.


\begin{theorem}\label{theo:Linfty-All-Together}
Let $(\Omega, \Sigma, \mu)$ be a measure space, let $X$ be a Banach space, and let $f\in S_{L_\infty(\mu, X)}$. Suppose that $\Omega$ contains an atom $A_0$ such that $\mu(\Omega\setminus A_0)>0$, and denote the distinct atoms by $\{A_i:\, i\in I\}$ for some index set $I$.
\begin{enumerate}
\item If there is an atomless set $S\subset \supp(f)$ such that $\|f|_S\|=1$, then $f$ is a ccs-Daugavet point.
\item If for every $\varepsilon>0$ there is an infinite collection of distinct atoms $\{A_\lambda\}_{\lambda\in \Lambda}$ such that for every $\lambda\in \Lambda$ we have $\|f|_{A_{\lambda}}\|>1-\frac{\varepsilon}{2}$, then $f$ is a Daugavet point. 
\item Otherwise, let $J:=\{i\in I:\, \|f|_{A_i}\|=1\}$, which is finite and non-empty, and let $\calb=\bigcup_{j\in J} A_j$. We have the following.
\begin{enumerate}
\item $f$ is super-Daugavet (resp. super-$\Delta$, or ccs-Daugavet) if and only if $f|_\calb$ is super-Daugavet (resp. super-$\Delta$, or ccs-Daugavet) in $L_\infty(\mu|_\calb,X)=X\oplus_\infty\stackrel{n)}{\cdots}\oplus_\infty X$.
\item $f$ is Daugavet if and only if there is some atom $A_0\subset \calb$ such that $f|_{A_0}$ is Daugavet in $L_\infty(\mu|_{A_0}, X)=X$.
\item If $f$ is ccs-$\Delta$, then $f|_\calb$ is ccs-$\Delta$ in $L_\infty(\mu|_\calb,X)=X\oplus_\infty\stackrel{n)}{\cdots}\oplus_\infty X$. 
\item $f$ is $\Delta$ if and only if either there is some atom $A_0\subset\calb$ such that $f|_{A_0}$ is $\Delta$ in $L_\infty(\mu|_{A_0}, X)=X$, or for each $\{p_k\}_{k=1}^{|J|}\subset (1,+\infty)$ with $\sum_{k=1}^{|J|} \frac{1}{p_k}=1$, there exists some $k\in\{1,\ldots,|J|\}$ such that $f|_{A_k}$ is $\Delta_{p_k}$ in $L_\infty(\mu|_{\{A_k\}}, X)=X$. 
\item If $\mu(\Omega\setminus \calb)>0$, or if $L_\infty(\mu,X)$ is a complex Banach space, $f$ is $\nabla$ if and only if $f$ is Daugavet.
\item If $\Omega=\calb$, then $f$ is $\nabla$ if and only if either $f|_{A_i}$ is $\nabla$ in $L_\infty(\mu|_{A_i}, X)=X$ for all $1\leq i\leq n$, or there is some atom $A_0\subset \calb$ such that $f|_{A_0}$ is Daugavet in $L_\infty(\mu|_{A_0}, X)=X$ (and in this case, $f$ is Daugavet as well).
\end{enumerate}
\end{enumerate}
\end{theorem}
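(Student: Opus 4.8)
The plan is to mirror the structure of Theorems \ref{theo-charac-Daug-C0LX} and \ref{theo-charac-Daug-AKX}: treat the three displayed cases separately, reduce the atomless case to the Daugavet property, invoke Theorem \ref{theo:charac-Daug-points-Linfty} for the purely atomic Daugavet case, and in the remaining case express $L_\infty(\mu,X)$ as a finite $\oplus_\infty$-sum so that the stability lemmas of Section \ref{section:stability} apply verbatim.

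For \textup{(1)} I would argue by reduction rather than directly. Since $S$ is atomless we have the isometric splitting $L_\infty(\mu,X)=L_\infty(\mu|_S,X)\oplus_\infty L_\infty(\mu|_{\Omega\setminus S},X)$, under which $f$ corresponds to $(f|_S,f|_{\Omega\setminus S})$ with $\|f|_S\|_\infty=1$ and $\|f|_{\Omega\setminus S}\|_\infty\le 1$. First I would check that $L_\infty(\mu|_S,X)$ has the Daugavet property: for every $g$ in its unit sphere and every $\varepsilon>0$ the set $\{t\in S:\,\|g(t)\|_X>1-\varepsilon/2\}$ has positive measure and is atomless, hence it contains a subset of finite positive measure, which in turn splits into an infinite pairwise disjoint family of positive-measure sets; thus $g$ satisfies the hypothesis of Theorem \ref{theo:charac-Daug-points-Linfty}.\textup{(1)} and is a Daugavet point. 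Since $L_\infty(\mu|_S,X)$ has the Daugavet property, every point of its unit sphere --- in particular $f|_S$ --- is a ccs-Daugavet point (as recalled in the introduction), and Lemma \ref{ccs-Daugavet-infty-sums}.\textup{(1)} then yields that $f$ is ccs-Daugavet. Alternatively one may argue directly, perturbing the functions chosen in a convex combination of slices of $B_{L_\infty(\mu,X)}$ on a sequence of disjoint subsets of $S$, imitating the proof of Theorem \ref{theo-charac-Daug-C0LX}.\textup{(1)}.

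For \textup{(2)} I would note that distinct atoms are pairwise disjoint sets of positive measure and that a strongly measurable function is a.e.\ constant on each atom, so $\|f|_{A_\lambda}\|>1-\varepsilon/2$ forces $\|f(t)\|_X>1-\varepsilon/2$ for a.e.\ $t\in A_\lambda$; hence the hypothesis of \textup{(2)} is literally the hypothesis of Theorem \ref{theo:charac-Daug-points-Linfty}.\textup{(1)}, so $f$ is a Daugavet point. For \textup{(3)} the starting point is the reformulation that the hypothesis of \textup{(1)} fails precisely when there is $\varepsilon_0>0$ such that $\{t:\,\|f(t)\|_X>1-\varepsilon_0/2\}$ agrees, up to a null set, with a finite union of atoms (a positive-measure set that is not a.e.\ a finite union of atoms contains either an atomless part of positive measure or infinitely many atoms, hence an infinite disjoint positive-measure family). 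Combining this with the failure of the hypothesis of \textup{(2)} and with $\|f\|_\infty=1$ shows at once that $J$ is finite and non-empty and that $\|f|_{\Omega\setminus\calb}\|_\infty<1$. Consequently $L_\infty(\mu,X)=L_\infty(\mu|_{\Omega\setminus\calb},X)\oplus_\infty X\oplus_\infty\cdots\oplus_\infty X$ with $|J|$ copies of $X$ (using again that $f$ is a.e.\ constant on each atom, so $L_\infty(\mu|_{A_j},X)=X$ isometrically), and $f$ becomes a point whose coordinates have norm $b<1$ on $\Omega\setminus\calb$ and norm $1$ on each atom of $J$. From here each item of \textup{(3)} is the application of the relevant stability lemma to this decomposition, exactly as in Theorem \ref{theo-charac-Daug-C0LX}.\textup{(2)}: item \textup{(a)} from Lemmas \ref{super-infty-sums} and \ref{ccs-Daugavet-infty-sums}; item \textup{(b)} from Lemma \ref{daugavet-infty-sums}; item \textup{(c)} from Lemma \ref{ccs-Delta-infty-sums}; item \textup{(d)} from Lemma \ref{delta-infty-sums}; and items \textup{(e)}--\textup{(f)} from Proposition \ref{prop-nablas-complex} in the complex case and from Lemma \ref{nabla-infty-sums} in the real case.

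I do not expect a conceptual obstacle: once Section \ref{section:stability} and Theorem \ref{theo:charac-Daug-points-Linfty} are available, the argument is essentially an assembly. The delicate points are purely measure-theoretic and stem from not assuming $\sigma$-finiteness: one must verify that an atomless set of positive measure always contains a finite --- hence ``splittable'' --- positive-measure subset, and that the negations of the hypotheses of \textup{(1)} and \textup{(2)} genuinely force the superlevel set $\{t:\,\|f(t)\|_X>1-\varepsilon_0/2\}$ to be, up to a null set, a finite union of atoms. This last fact is what legitimises the passage in \textup{(3)} to a finite $\oplus_\infty$-sum, and is the step I would write out most carefully.
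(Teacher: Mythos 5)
Your proposal is correct and follows essentially the same route as the paper: for \textup{(1)} the splitting $L_\infty(\mu,X)=L_\infty(\mu|_S,X)\oplus_\infty L_\infty(\mu|_{\Omega\setminus S},X)$ together with the Daugavet property of the atomless part (Corollary \ref{cor:Linfty-Daugavet-Property}) and the upward ccs-Daugavet stability of Lemma \ref{ccs-Daugavet-infty-sums}, for \textup{(2)} a direct reduction to Theorem \ref{theo:charac-Daug-points-Linfty}.\textup{(1)}, and for \textup{(3)} the finite $\oplus_\infty$-decomposition plus the stability lemmas of Section \ref{section:stability}. Your extra care in checking that $J$ is finite and non-empty and that $\|f|_{\Omega\setminus\calb}\|_\infty<1$ fills in details the paper leaves implicit, but it is the same argument.
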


\begin{proof}
To show \textup{(1)}, just note that we can split $f=(f|_{S}, f|_{\Omega \setminus S})$ in $L_\infty(\mu|_{S}, X)\oplus_\infty L_\infty(\mu|_{\Omega\setminus S}, X)$. Since $L_\infty(\mu|_{S}, X)$ has Daugavet property by Corollary \ref{cor:Linfty-Daugavet-Property}, $f|_{S}\in L_\infty(\mu|_{S}, X)$ is a ccs-Daugavet point, and so, $f$ is also ccs-Daugavet by \cite[Theorem 3.32]{MPZpre}.

Note that \textup{(2)} is already shown in Theorem \ref{theo:charac-Daug-points-Linfty}, so only \textup{(3)} remains, and this is once more a consequence of applying the stability lemmas from Section \ref{section:stability}.
\end{proof}

\begin{remark}
Theorem \ref{theo:Linfty-All-Together} implies that the $\Delta$ and $\nabla$ points exist and coincide for every infinite-dimensional $L_\infty(\mu)$ space. However, the vector-valued scenario is completely different. For instance, there are infinite-dimensional $L_\infty(\mu, X)$ spaces with $\Delta$ points that are not $\nabla$ (e.g. $\Omega$ is just 2 atoms and $X=C([0,1])\oplus_2 C([0,1])$) or without either type of points (e.g. $\Omega$ is just 2 atoms and $X=c_0$).
\end{remark}

\subsection{Results on \texorpdfstring{$\boldsymbol{L_1(\mu, X)}$}{L1(mu,X)} spaces}\hfill\\ \label{subsection:L1(mu,X)}

In this section, we will study $L_1(\mu, X)$ spaces, where $(\Omega, \Sigma, \mu)$ will always denote a measure space. It is well-known that in $L_1(\mu)$, the notions of $\Delta$, Daugavet, super-$\Delta$, and super-Daugavet (and of course $\nabla$ in the complex case) points are equivalent. Moreover, these notions can be characterized by the support being atomless (see \cite[Proposition 4.7 and Theorem 4.8]{MPZpre}). In the real case, it is also shown that ccs-$\Delta$ points also coincide with those notions (\cite[Proposition 4.9]{MPZpre}), and that $f\in S_{L_1(\mu)}$ is $\nabla$ if and only if its support is either atomless or a single atom (\cite[Proposition 3.7]{HLPV}). However, these notions do not coincide with ccs-Daugavet because $L_1(\mu)$ contains ccs-Daugavet points if and only if it has Daugavet property (\cite[Proposition 4.12]{MPZpre}).

The situation is different in the vector-valued case: for $f\in S_{L_1(\mu, X)}$, having atomless support implies being super-Daugavet (\cite[Theorem 4.8]{MPZpre}), but being $\Delta$ does not imply this condition or being Daugavet in general (see Remark \ref{remark:uses-of-stability}). In this section we will characterize $\Delta$, Daugavet, and $\nabla$ points in $L_1(\mu, X)$ spaces and study the other notions. Let us start with an improvement of the result in the scalar-valued case.






As mentioned, the \textit{real} $L_1(\mu)$ spaces, atomless support implies being ccs-$\Delta$ \cite[Proposition 4.9]{MPZpre}. This proof relies on a result from \cite[Theorem]{ABHLP20} whose proof only works in the real case. However, we can show further by Lemma \ref{ccs-Delta-1-sums}.

\begin{theorem}[{Extension of \cite[Proposition 4.9]{MPZpre}}]\label{theorem:ccs-delta-L1}
Let $(\Omega, \Sigma, \mu)$ be an arbitrary measure space, and let $X$ be a Banach space. If $f\in S_{L_1(\mu, X)}$ has atomless support, then it is a ccs-$\Delta$ point.
\end{theorem}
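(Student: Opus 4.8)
The plan is to reduce the vector-valued statement to the scalar-valued result \cite[Proposition 4.9]{MPZpre} together with the upward stability Lemma \ref{ccs-Delta-1-sums}. The key observation is that a function $f\in S_{L_1(\mu,X)}$ with atomless support can be ``localized'' onto a piece of measure $1$ where the geometry of $X$ interacts trivially: if $S:=\supp(f)$ is atomless, then since $\int_S \|f\|_X\, d\mu = 1$, we can choose a measurable $A\subseteq S$ of positive finite measure on which $\|f\|_X$ is bounded away from $0$, and then split $L_1(\mu,X) = L_1(\mu|_A, X)\oplus_1 L_1(\mu|_{\Omega\setminus A}, X)$ with $f = (f|_A, f|_{\Omega\setminus A})$. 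By Lemma \ref{ccs-Delta-1-sums}, $f$ is ccs-$\Delta$ provided $\widehat f := f|_A / \|f|_A\|$ is ccs-$\Delta$ in $L_1(\mu|_A, X)$; so it suffices to prove the statement when $\mu$ is a finite atomless measure and $f$ has full support.

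The next step is to handle the finite atomless case directly. Here I would mimic the proof of \cite[Proposition 4.9]{MPZpre} as closely as possible. Given a convex combination of slices $C=\sum_{i=1}^n \lambda_i S_i$ of $B_{L_1(\mu,X)}$ containing $f$, with $S_i = S(B_{L_1(\mu,X)}, \Phi_i, \delta_i)$ for some $\Phi_i\in S_{L_1(\mu,X)^*}=S_{L_\infty(\mu, X^*)}$ (using $\sigma$-finiteness of $\mu|_A$ for this duality), write $f = \sum_i \lambda_i f_i$ with $f_i\in S_i$. The goal is to perturb each $f_i$ inside its slice by a ``bump on a small set of tiny measure'' so that the average moves close to $f$ while the new function is far from $f$. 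Concretely: choose $\varepsilon>0$; using atomlessness, pick a measurable $E\subseteq A$ of small measure on which the norm of each $f_i$, of $f$, and the relevant averages are controlled; on $E$ replace $f_i$ by something pointing in the ``opposite direction'' with enough mass to guarantee $\|f - g_i\| \ge 2-\varepsilon$, while on $A\setminus E$ leave $f_i$ essentially unchanged and renormalize. The point of restricting to atomless support is precisely that such a set $E$ of arbitrarily small measure exists and that rescaling by $\mu(E)$ keeps everything inside the slices (slices are stable under small $L_1$-perturbations). One must check that $g_i\in S_i$ — this follows because $\Phi_i(g_i)$ differs from $\Phi_i(f_i)$ by an amount controlled by $\|\Phi_i\|_\infty\, \mu(E)$-ish quantities — and that $\sum_i \lambda_i g_i$ is within $\varepsilon$ of $f$.

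Alternatively — and this is probably cleaner — one can avoid re-running the slice computation entirely by invoking directly the characterization of ccs-$\Delta$ points via $\Delta_\varepsilon(f)$ (i.e. $f$ is ccs-$\Delta$ iff $f\in\overline{\co}\,(C\cap \Delta_\varepsilon(f))$ for every convex combination of slices $C\ni f$ and every $\varepsilon>0$, cf. \cite[Section 4]{MPZpre}) and the fact that in $L_1(\mu, X)$ with atomless finite $\mu$, every slice containing $f$ already contains elements of $\Delta_\varepsilon(f)$ whose relevant averages reconstruct $f$; the real-case argument of \cite[Proposition 4.9]{MPZpre} for this step uses only the lattice/atomless structure of the base measure and a Lyapunov-type convexity argument, not any special feature of the real scalars. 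The main obstacle I anticipate is exactly the point flagged in the statement: the real proof passes through \cite[Theorem]{ABHLP20}, which is genuinely real-valued, so I must isolate which portion of \cite[Proposition 4.9]{MPZpre} actually needed that input and replace it — and my claim is that this input was only used to produce $(x,0)\in S_{X\oplus_1 Y}$ being ccs-$\Delta$ from $x$ being ccs-$\Delta$, which is now supplied unconditionally (and over either scalar field) by Lemma \ref{ccs-Delta-1-sums}. So the cleanest write-up is: reduce to finite atomless full-support $f$, prove that case by the bump argument above (which is scalar-field-agnostic), and then deduce the general case from Lemma \ref{ccs-Delta-1-sums}.
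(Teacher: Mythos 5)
Your proposal correctly identifies the key new ingredient (Lemma \ref{ccs-Delta-1-sums}) and the overall ``restrict to the support, then lift'' structure, but there are two genuine gaps. First, your initial localization is not justified: you split $f=(f|_A, f|_{\Omega\setminus A})$ for a set $A\subseteq \supp(f)$ on which $\|f\|_X$ is bounded away from $0$, and then invoke Lemma \ref{ccs-Delta-1-sums} to reduce to $f|_A/\|f|_A\|$. But that lemma only lifts ccs-$\Delta$ points of the form $(x,0)$; unless $A$ is (essentially) all of $\supp(f)$ the second coordinate $f|_{\Omega\setminus A}$ is nonzero, and the paper proves no upward $\oplus_1$-stability of ccs-$\Delta$ points with both coordinates nonzero (only Lemma \ref{super-1-sums} for the super notions). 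Since you generally cannot have both $A=\supp(f)$ and $\|f\|_X$ bounded below on $A$, this reduction cannot be repaired as stated. Second, the core ``bump'' argument for the atomless full-support case is only sketched, and the slice-membership check as you describe it is flawed: if $g_i$ carries a spike of essentially unit mass on the small set $E$ (which is what forces $\|f-g_i\|\geq 2-\varepsilon$), then $\Phi_i(g_i)-\Phi_i(f_i)$ is controlled by $\|\Phi_i\|_\infty\int_E\|g_i-f_i\|_X\,d\mu$, which is \emph{not} small; one must instead arrange the spike so that $\Phi_i$ remains nearly maximized on it. There is also a duality issue: $L_1(\mu,X)^*=L_\infty(\mu,X^*)$ requires the Radon--Nikod\'ym property of $X^*$, not merely $\sigma$-finiteness.

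The paper's actual proof is much shorter and avoids all of this: since $S=\supp(f)$ is atomless, $L_1(\mu|_S,X)$ has the Daugavet property, hence \emph{every} point of its unit sphere is a ccs-Daugavet point (by Shvydkoy's lemma, as recalled in the introduction), so $f|_S$ is in particular ccs-$\Delta$; then $f=(f|_S,0)$ is ccs-$\Delta$ in $L_1(\mu|_S,X)\oplus_1 L_1(\mu|_{\Omega\setminus S},X)$ by Lemma \ref{ccs-Delta-1-sums}, for which the second coordinate really is $0$. Your diagnosis of where the real-scalar hypothesis entered the old proof, and that Lemma \ref{ccs-Delta-1-sums} is what replaces it, is exactly right; what is missing is the observation that the atomless piece is already covered by the Daugavet property, so no new slice computation is needed.
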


\begin{proof}
Write $S:=\supp(f)$. Since $S$ is atomless, $L_1(\mu|_S, X)$ has Daugavet property, so $f|_S$ is a ccs-$\Delta$ point in this space. Then, by Lemma \ref{ccs-Delta-1-sums}, $f=(f|_S, 0)$ is a ccs-$\Delta$ point in $L_1(\mu, X)=L_1(\mu|_S, X)\oplus_1 L_1(\mu|_{\Omega\setminus S}, X)$.
\end{proof}

In particular, we can characterize ccs-$\Delta$ points in $L_1(\mu)$ spaces even in the complex case.

\begin{corollary}[{\cite[Corollary 4.11]{MPZpre} for the real case}]\label{cor:notions-L1(mu)}
Let $(\Omega, \Sigma, \mu)$ be an arbitrary measure space and let $f\in S_{L_1(\mu)}$. Then the following claims are equivalent: $\supp(f)$ is atomless, $f$ is a ccs-$\Delta$ point, $f$ is a super-Daugavet point, $f$ is a super-$\Delta$ point, $f$ is a Daugavet point, $f$ is a $\Delta$ point.
\end{corollary}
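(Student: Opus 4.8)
The plan is to derive Corollary~\ref{cor:notions-L1(mu)} by assembling the equivalences that are already available in the literature together with Theorem~\ref{theorem:ccs-delta-L1}, which supplies precisely the one implication that was missing in the complex case. First I would recall, from \cite[Proposition 4.7 and Theorem 4.8]{MPZpre}, that for $f \in S_{L_1(\mu)}$ the four conditions ``$\supp(f)$ is atomless'', ``$f$ is super-Daugavet'', ``$f$ is super-$\Delta$'', ``$f$ is Daugavet'', and ``$f$ is a $\Delta$ point'' are all equivalent (these are stated there for an arbitrary measure $\mu$ and work in both the real and complex cases). So the whole statement reduces to inserting ``$f$ is a ccs-$\Delta$ point'' into this chain of equivalences.

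For that, I would argue the two implications separately. On one hand, every ccs-$\Delta$ point is a $\Delta$ point: this is an immediate consequence of Bourgain's Lemma, as recorded in the implication diagram after Definition~\ref{def-six-notions} in the introduction (a slice is, in particular, a convex combination of one slice). Hence ``ccs-$\Delta$'' $\Rightarrow$ ``$\Delta$'' $\Rightarrow$ ``$\supp(f)$ atomless'' via the scalar characterization. On the other hand, if $\supp(f)$ is atomless, then Theorem~\ref{theorem:ccs-delta-L1} (applied with $X = \bbk$, so that $L_1(\mu, X) = L_1(\mu)$) gives directly that $f$ is a ccs-$\Delta$ point. Combining these two directions closes the loop and yields all six stated conditions as equivalent; I would present this as a short cyclic chain, e.g. ``$\supp(f)$ atomless'' $\Rightarrow$ ``$f$ ccs-$\Delta$'' $\Rightarrow$ ``$f$ $\Delta$'' $\Rightarrow$ (by the cited scalar result) back to the other four notions and to ``$\supp(f)$ atomless''.

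I do not expect any genuine obstacle here: the corollary is essentially a bookkeeping statement once Theorem~\ref{theorem:ccs-delta-L1} is in hand, and the only subtlety worth a sentence is emphasizing that the proof of \cite[Proposition 4.9]{MPZpre} relied on \cite[Theorem]{ABHLP20}, which is real-valued, whereas the route through Lemma~\ref{ccs-Delta-1-sums} used in Theorem~\ref{theorem:ccs-delta-L1} is scalar-field agnostic and therefore also covers the complex case that was left implicit in \cite[Section 4]{MPZpre}. If anything needs care, it is simply making sure the cited equivalences from \cite{MPZpre} are indeed stated for arbitrary (not necessarily $\sigma$-finite or nonatomic) measures; since the present paper adopts that generality throughout Subsection~\ref{subsection:L1(mu,X)}, and Theorem~\ref{theorem:ccs-delta-L1} is proved in exactly that generality, no extra hypotheses are needed.

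\begin{proof}
By \cite[Proposition 4.7 and Theorem 4.8]{MPZpre}, for $f\in S_{L_1(\mu)}$ the conditions ``$\supp(f)$ is atomless'', ``$f$ is a super-Daugavet point'', ``$f$ is a super-$\Delta$ point'', ``$f$ is a Daugavet point'', and ``$f$ is a $\Delta$ point'' are all equivalent. It remains to insert the ccs-$\Delta$ condition into this list. If $f$ is a ccs-$\Delta$ point, then in particular it is a $\Delta$ point (a slice is a convex combination of slices; see the implications after Definition \ref{def-six-notions}), hence $\supp(f)$ is atomless by the equivalences above. Conversely, if $\supp(f)$ is atomless, then by Theorem \ref{theorem:ccs-delta-L1} (applied to $X=\bbk$, so that $L_1(\mu,X)=L_1(\mu)$) the point $f$ is a ccs-$\Delta$ point. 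This closes the chain of equivalences and finishes the proof.
\end{proof}
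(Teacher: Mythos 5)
Your proposal is correct and matches the paper's intended argument: the paper presents this as an immediate corollary of Theorem \ref{theorem:ccs-delta-L1} combined with the known equivalences from \cite[Proposition 4.7 and Theorem 4.8]{MPZpre} and the trivial implication that ccs-$\Delta$ points are $\Delta$ points. Nothing is missing.
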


Now we will characterize $\Delta$, Daugavet, and $\nabla$ points in $L_1(\mu, X)$ spaces.

\begin{theorem}\label{charac-notions-L1(mu,X)}
Let $(\Omega, \Sigma, \mu)$ be any measure space, let $X$ be a Banach space, and let $f\in S_{L_1(\mu, X)}$.
\begin{enumerate}
\item If $\supp(f)$ is atomless, then $f$ is a super-Daugavet and ccs-$\Delta$ point.
\item Otherwise, let $\mathcal{A}:=\bigcup_{i\in I} A_i$ be the union of all distinct atoms $A_i$ in the support of $f$. Then the following holds.
\begin{enumerate}
\item $f$ is $\Delta$ (respectively Daugavet) if and only if for every $i\in I$, the function $\frac{f|_{A_i}}{\|f|_{A_i}\|}$ is $\Delta$ (respectively Daugavet) in $L_1(\mu|_{A_i}, X)=X$.
\item In the real case, $f$ is $\nabla$ if and only if either $f$ is Daugavet or there is an atom $A$ such that $f=(0, f|_A)\in L_1(\mu|_{\Omega\setminus A}, X)\oplus L_1(\mu|_A, X)$ and $f|_A$ is a $\nabla$ point in $L_1(\mu|_A, X)=X$.
\end{enumerate}
\end{enumerate}
\end{theorem}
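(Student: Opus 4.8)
The plan is to split $L_1(\mu, X)$ as an $\ell_1$-sum tailored to the support of $f$ and its atomic and non-atomic parts, and then feed this decomposition into the $\ell_1$-stability results of Section \ref{section:stability}. Part \textup{(1)} is essentially already in the literature: that $f$ is super-Daugavet when $\supp(f)$ is atomless is \cite[Theorem 4.8]{MPZpre}, and that it is ccs-$\Delta$ is Theorem \ref{theorem:ccs-delta-L1}; alternatively, since $L_1(\mu|_{\supp(f)}, X)$ has the Daugavet property, both properties hold for $f|_{\supp(f)}$ there, and they transfer to $f=(f|_{\supp(f)}, 0)\in L_1(\mu|_{\supp(f)}, X)\oplus_1 L_1(\mu|_{\Omega\setminus\supp(f)}, X)$ by Lemma \ref{super-1-sums}\textup{(1)} and Lemma \ref{ccs-Delta-1-sums}.

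For part \textup{(2)}, the first thing I would record is that $\supp(f)$ is $\sigma$-finite, since $\mu(\{\|f(\cdot)\|_X>1/n\})\le n\int_\Omega\|f\|_X\,d\mu<\infty$ for each $n\in\bbn$; hence $\supp(f)$ contains at most countably many distinct atoms and the index set $I$ is countable. Writing $S_0:=\supp(f)\setminus\mathcal{A}$ (which is atomless: an atom inside it would be, up to identification, one of the $A_i$) and $N:=\Omega\setminus\supp(f)$, the standard splitting of $L_1$ along a countable measurable partition gives the isometric identification
$$L_1(\mu, X)=\left[\bigoplus_{i\in I} L_1(\mu|_{A_i}, X)\right]_{\ell_1}\oplus_1 L_1(\mu|_{S_0}, X)\oplus_1 L_1(\mu|_N, X),$$
a countable $\ell_1$-sum in which $L_1(\mu|_{A_i}, X)=X$ for every $i\in I$. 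Under this identification $f$ has nonzero coefficient exactly on the coordinates $i\in I$, where its unit vector is $f|_{A_i}/\|f|_{A_i}\|$, and on the $S_0$-coordinate whenever $\mu(S_0)>0$. For \textup{(a)}, Lemma \ref{delta-daug-ell1-countable-sums} says that $f$ is $\Delta$ (resp.\ Daugavet) if and only if the unit vector in each of these coordinates is $\Delta$ (resp.\ Daugavet) in the corresponding space; but $L_1(\mu|_{S_0}, X)$ has the Daugavet property, so its unit vector is automatically Daugavet (hence $\Delta$), and the criterion collapses to the one in the statement.

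For part \textup{(b)} in the real case, the direction "$\Leftarrow$" is immediate: every Daugavet point is a $\nabla$ point, and if $f$ is supported on a single atom $A$ then $f=(f|_A, 0)$ and Lemma \ref{nabla-1-sums}\textup{(1)} gives that $f$ is $\nabla$ precisely when $f|_A$ is $\nabla$ in $X$. For "$\Rightarrow$", assume $f$ is $\nabla$ but not Daugavet; then by \cite[Theorem 2.3]{HLPV} it is strongly exposed, hence an extreme point of $B_{L_1(\mu, X)}$. The key observation is that an extreme point of the above countable $\ell_1$-sum must be concentrated in a single coordinate — if two coordinates $a_1 z_1$ and $a_2 z_2$ of $f$ were nonzero, then $f$ would equal the nontrivial convex combination $a_1(z_1, 0, \ldots)+a_2(0, z_2, 0, \ldots)$ of two distinct unit-ball elements. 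Since $\supp(f)$ contains an atom, $f$ does not vanish on $\mathcal{A}$, so that single coordinate must be some $A_i$, i.e., $\supp(f)=A_i$; then Lemma \ref{nabla-1-sums}\textup{(1)} again identifies the $\nabla$ condition for $f$ with the $\nabla$ condition for $f|_{A_i}\in X$, which is exactly the stated alternative. The only point requiring genuine care is the possibly countably infinite family of atoms — this is why $\sigma$-finiteness of $\supp(f)$ is invoked and why the countable $\ell_1$-version of the $\Delta$/Daugavet stability result (Lemma \ref{delta-daug-ell1-countable-sums}) was needed; the rest is a direct assembly of the stability lemmas.
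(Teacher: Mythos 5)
Your proof is correct and follows essentially the same route as the paper: the same $\ell_1$-decomposition of $L_1(\mu,X)$ into the (countably many) atoms of $\supp(f)$, the atomless remainder, and the complement of the support, combined with Lemma \ref{delta-daug-ell1-countable-sums} for part (2)(a) and the results cited for part (1). The only variation is in (2)(b), where you reduce to a single atom via strong exposedness and extremality (\cite[Theorem 2.3]{HLPV}) instead of invoking Lemma \ref{nabla-1-sums}(2) directly; both arguments are valid.
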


\begin{proof}

Note that \textup{(1)} is shown in \cite[Theorem 4.8]{MPZpre} and Theorem \ref{theorem:ccs-delta-L1}. Also, \textup{(2).(b)} is deduced from Lemma \ref{nabla-1-sums}. 

Finally, we show \textup{(2).(a)}. Let $\mathcal{A}\neq \emptyset$ be given. Since the image of $f$ is essentially separable, $I$ is at most countable. Let $S=\supp(f)\setminus \cala$, and let $\calb:=\Omega\setminus \supp(f)$. Note that we can split $f=\left( f|_S, 0, \|f|_{\mathcal{A}}\| \frac{f|_{\mathcal{A}}}{\|f|_{\mathcal{A}}\|} \right)$ in
$$L_1(\mu, X)=L_1(\mu|_S, X)\oplus_1 L_1(\mu|_{\calb}, X)\oplus_1 L_1(\mu|_\cala, X),$$
where $L_1(\mu|_\cala, X)$ is $\ell_1(X):=\left[ \bigoplus_{n=1}^{\infty} X \right]_{\ell_1}$ if $I$ is infinite and $\left[ \bigoplus_{n=1}^{|I|} X \right]_{\ell_1}$ otherwise.

Finally, if $S\neq \emptyset$, then $f|_S=\|f|_S\| \frac{f|_S}{\|f|_S\|}$, and $\frac{f|_S}{\|f|_S\|}$ is a super-Daugavet point in $L_1(\mu|_S, X)$ (in particular a Daugavet and a $\Delta$ point), so the rest of claims follow from applying Lemmas \ref{delta-daug-1-sums} and \ref{delta-daug-ell1-countable-sums}.
\end{proof}

\begin{remark}
    In \textit{real} $L_1(\mu)$ spaces, the $\nabla$ points are characterized by their supports being either atomless or a single atom. Theorem \ref{charac-notions-L1(mu,X)} shows that the situation is quite different for the vector-valued case. Indeed, on the one hand, if $X$ contains no Daugavet points, then a point being $\nabla$ implies the condition on its support, but the converse statement does not necessarily hold, because there may be functions supported on an atom that are not $\nabla$.
    On the other hand, the condition on the support is not necessary in some cases. Indeed, if $X$ contains a Daugavet point and $\Omega$ contains at least $2$ disjoint sets of positive measure, then there are always $\nabla$ points whose support is neither atomless nor a single atom.
\end{remark}



Note that we can also characterize when being $\Delta$ or Daugavet implies that the support is atomless, or when $\Delta$ implies Daugavet.

\begin{proposition}
Let $(\Omega, \Sigma, \mu)$ be a measure space, and let $X$ be a Banach space. Let $f\in S_{L_1(\mu, X)}$. Consider the following statements.
\begin{enumerate}
\item The support of $f$ contains no atoms. 
\item $f$ is a ccs-$\Delta$ point
\item $f$ is a super Daugavet point.
\item $f$ is a super $\Delta$ point.
\item $f$ is a Daugavet point.
\item $f$ is a $\Delta$ point.
\end{enumerate}
Then \textup{(1)} implies every other item, and every item implies \textup{(6)}. Moreover,
\begin{itemize}
\item[(a)] If $\mu$ is atomless, then \textup{(6)} implies \textup{(1)}.
\item[(b)] If $\mu$ contains an atom, then:
\begin{itemize}
\item[(i)] If $S_X$ contains no $\Delta$ points, then \textup{(6)} implies \textup{(1)}. Otherwise:
\item[(ii)] If $S_X$ contains a $\Delta$ point $x$, then \textup{(6)} does not imply \textup{(1)}. 
\item[(iii)] If $S_X$ contains a $\Delta$ point $x$ that is not Daugavet, then \textup{(6)} does not imply \textup{(5)}. 
\item[(iv)] If $S_X$ contains a Daugavet point $x$, then \textup{(5)} does not imply \textup{(1)}. 
\end{itemize}
\end{itemize}
\end{proposition}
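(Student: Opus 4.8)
The plan is to read off the entire statement from the characterizations already established, principally Theorem~\ref{charac-notions-L1(mu,X)}, together with the implication diagram for the seven notions from the introduction; essentially no new argument is needed, only careful bookkeeping.

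First I would handle the general implications. That \textup{(1)} implies \textup{(2)} and \textup{(3)} is exactly Theorem~\ref{charac-notions-L1(mu,X)}.\textup{(1)} (equivalently, \cite[Theorem~4.8]{MPZpre} together with Theorem~\ref{theorem:ccs-delta-L1}); from \textup{(3)} one reaches \textup{(4)}, \textup{(5)}, and \textup{(6)} along the chains super-Daugavet $\Rightarrow$ super-$\Delta$ $\Rightarrow$ $\Delta$ and super-Daugavet $\Rightarrow$ Daugavet $\Rightarrow$ $\Delta$. That each of \textup{(2)}--\textup{(5)} implies \textup{(6)} is likewise immediate from that diagram. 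For \textup{(a)}: when $\mu$ is atomless, no $f\in S_{L_1(\mu,X)}$ can have an atom in its support, so \textup{(1)} holds automatically and is in particular implied by \textup{(6)}; combined with the previous implications, all six statements are then equivalent in that case.

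For \textup{(b)} I would fix an atom $A$ of $\mu$, which by the convention adopted in Section~\ref{section-preliminaries} may be taken with $0<\mu(A)<\infty$. For \textup{(i)}, assume \textup{(6)}: if $\supp(f)$ is atomless we are in case \textup{(1)} and done; otherwise $\supp(f)$ contains some atom $A_i$, and Theorem~\ref{charac-notions-L1(mu,X)}.\textup{(2).(a)} forces $\frac{f|_{A_i}}{\|f|_{A_i}\|}$ to be a $\Delta$ point of $L_1(\mu|_{A_i},X)=X$, contradicting the hypothesis that $S_X$ has no $\Delta$ points. For \textup{(ii)}--\textup{(iv)}, a single test function does all the work: take $f:=\frac{1}{\mu(A)}\,x\,\chi_A\in S_{L_1(\mu,X)}$, whose support is the atom $A$, so that \textup{(1)} fails; under the isometry $L_1(\mu|_A,X)=X$, the normalized restriction $f|_A/\|f|_A\|$ corresponds to $x$, so by Theorem~\ref{charac-notions-L1(mu,X)}.\textup{(2).(a)} the point $f$ is $\Delta$ (respectively Daugavet) precisely when $x$ is. Taking $x$ to be a $\Delta$ point of $X$ then gives a witness for \textup{(ii)}; taking $x$ a $\Delta$ point that is not Daugavet gives \textup{(iii)}; and taking $x$ a Daugavet point gives \textup{(iv)}.

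I do not anticipate a genuine obstacle, since all the force is already in Theorem~\ref{charac-notions-L1(mu,X)}. The only points that need care are invoking the paper's convention to assume $\mu(A)<\infty$ (so that $x\chi_A$ lies in $L_1(\mu,X)$ and the identification $L_1(\mu|_A,X)=X$ is the obvious evaluation isometry under which $f$ is $\Delta$/Daugavet iff $x$ is), and, in part \textup{(i)}, remembering to treat separately the case in which $\supp(f)$ is already atomless.
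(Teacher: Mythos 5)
Your proposal is correct and follows essentially the same route as the paper: the paper's proof is a one-line reduction to the splitting $L_1(\mu,X)=X\oplus_1 Y$ in the atomic case together with Lemma~\ref{delta-daug-1-sums} and Theorem~\ref{charac-notions-L1(mu,X)}, which is exactly the machinery you invoke (just spelled out with the explicit test function $\frac{1}{\mu(A)}x\chi_A$ and the vacuous truth of \textup{(1)} when $\mu$ is atomless).
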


\begin{proof}
Note that in \textup{(a)} the space has Daugavet property, and in \textup{(b)} the space can be split as $L_1(\mu, X)=X\oplus_1 Y$ for some Banach space $Y$, and now the conclusion follows from Lemma \ref{delta-daug-1-sums} and Theorem \ref{charac-notions-L1(mu,X)}.
\end{proof}

\begin{Remark}
There are classes of Banach spaces with no $\Delta$ points. Some examples of such spaces are asymptotically uniformly smooth Banach spaces, asymptotically uniformly convex Banach spaces with reflexivity \cite{ALMP22}, and locally uniformly nonsquare Banach spaces \cite{KLT24, LT24} (which includes LUR Banach spaces). On the other hand, there exists a reflexive Banach space with super-$\Delta$ points \cite[Theorem 3.1]{AALMPPV24pre}.    
\end{Remark}

We make a comment on ccs-Daugavet points now. Recall that $S_{L_1(\mu)}$ contains a ccs-Daugavet point if and only if $L_1(\mu)$ has Daugavet property, and in that case every point is ccs-Daugavet. We have the following.

\begin{theorem}
Let $(\Omega, \Sigma, \mu)$ be any measure space, and let $X$ be a Banach space containing a denting point. Then the following claims are equivalent:
\begin{enumerate}
\item $L_1(\mu)$ contains a ccs-Daugavet point.
\item $L_1(\mu, X)$ contains a ccs-Daugavet point.
\item $\mu$ is atomless.
\end{enumerate}
\end{theorem}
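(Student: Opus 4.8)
The plan is to derive all three equivalences from two facts that are already available. The first is that a nonzero atomless measure $\nu$ makes $L_1(\nu,Z)$ into a space with the Daugavet property, for \emph{every} Banach space $Z$ --- this is exactly the fact invoked in the proof of Theorem~\ref{theorem:ccs-delta-L1} --- so that, by the observation recorded after Definition~\ref{def-six-notions}, every point of $S_{L_1(\nu,Z)}$ is then a ccs-Daugavet point. The second is Proposition~\ref{prop:denting-no-ccs-1-sums}: if one $\oplus_1$-summand of a space has a denting point, the whole space has convex combinations of slices around $0$ of arbitrarily small diameter, hence no ccs-Daugavet points. In keeping with the conventions of Section~\ref{section-preliminaries} I read ``$\mu$ atomless'' as ``$\mu$ has no atom of finite positive measure'' (atoms of infinite measure are invisible to $L_1(\mu,X)$), and I assume $\mu\neq 0$.

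For \textup{(3)}$\Rightarrow$\textup{(1)} and \textup{(3)}$\Rightarrow$\textup{(2)}: after deleting the (irrelevant) infinite-measure atoms one has $L_1(\mu)=L_1(\widetilde\mu)$ and $L_1(\mu,X)=L_1(\widetilde\mu,X)$ for a genuinely atomless nonzero measure $\widetilde\mu$, so both spaces have the Daugavet property, and hence every point of each sphere --- in particular, at least one point --- is ccs-Daugavet. For the converses I would argue by contraposition: suppose $\mu$ has an atom $A$ with $0<\mu(A)<\infty$. Since a strongly measurable function is $\mu$-a.e.\ constant on an atom, the assignment $z\mapsto\mu(A)^{-1}z\,\chi_A$ is an isometric isomorphism of $Z$ onto $L_1(\mu|_A,Z)$, so $L_1(\mu,Z)=Z\oplus_1 L_1(\mu|_{\Omega\setminus A},Z)$ isometrically for any Banach space $Z$. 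Applying this with $Z=\bbk$ (the scalars have the denting point $1\in S_{\bbk}$) and with $Z=X$ (a denting point by hypothesis), Proposition~\ref{prop:denting-no-ccs-1-sums} shows that neither $L_1(\mu)$ nor $L_1(\mu,X)$ has a ccs-Daugavet point, contradicting \textup{(1)}, respectively \textup{(2)}. Combining the four implications gives \textup{(1)}$\Leftrightarrow$\textup{(3)}$\Leftrightarrow$\textup{(2)}.

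The one point that requires a word of care --- and the closest thing here to an obstacle --- is the subcase $\mu(\Omega\setminus A)=0$, where the second summand degenerates and $L_1(\mu,X)\cong X$, $L_1(\mu)\cong\bbk$, so Proposition~\ref{prop:denting-no-ccs-1-sums} cannot be quoted with an honest second factor $Y$. But its proof applies verbatim to a single space $Z$ with a denting point $d$: pick $x^*\in S_{Z^*}$ and $\delta\in(0,\varepsilon)$ with $d\in S(x^*,\delta)$ and $\diam(S(x^*,\delta))<\varepsilon$; then $C:=\tfrac12 S(x^*,\delta)+\tfrac12 S(-x^*,\delta)$ is a convex combination of slices of $B_Z$, it contains $0$ (as $\tfrac12 d+\tfrac12(-d)$), and $C\subseteq\varepsilon B_Z$, so a hypothetical ccs-Daugavet point $y$ would yield the absurdity $2=\sup_{z\in C}\|y-z\|\le 1+\varepsilon$. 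Apart from this and the routine bookkeeping about atoms, the argument is pure assembly of the two recalled facts; as a check, \textup{(1)}$\Leftrightarrow$\textup{(3)} can alternatively just be quoted, since $L_1(\mu)$ has a ccs-Daugavet point iff it has the Daugavet property (\cite[Proposition~4.12]{MPZpre}) iff $\mu$ is atomless.
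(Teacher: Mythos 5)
Your proof is correct and follows essentially the same route as the paper: the atomless direction via the Daugavet property of $L_1$ over an atomless measure, and the converse by splitting off a finite atom as a $\oplus_1$-summand and invoking Proposition \ref{prop:denting-no-ccs-1-sums} (with $Z=\bbk$ for the scalar case and $Z=X$ for the vector-valued case). The only cosmetic difference is in the single-atom subcase, where the paper notes that a ccs-Daugavet point forces every slice to have diameter two and hence excludes denting points, while you rerun the construction of the proposition inside a single space; both arguments are valid.
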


\begin{proof}
The equivalence between \textup{(1)} and \textup{(3)} was established in \cite[Proposition 4.12]{MPZpre}.

\textup{(3)} implies \textup{(2)}: If $\mu$ is atomless, then $L_1(\mu, X)$ has Daugavet property (see for instance \cite[Theorem 4.8]{MPZpre}), so every point is ccs-Daugavet.

\textup{(2)} implies \textup{(3)}: When $\Omega$ consists of a single atom, $L_1(\mu, X)=X$ cannot contain ccs-Daugavet points (because if a space has a ccs-Daugavet point, in particular all slices have diameter 2, so there cannot be denting points, which is a contradiction to our assumption). So we only need to prove this for the case where $\Omega$ contains at least two disjoint subsets of positive measure. If $\mu$ contains an atom, then we can split $L_1(\mu, X)=X\oplus_1 Y$ for some Banach space $Y$. Since $X$ has a denting point, $L_1(\mu, X)$ cannot have a ccs-Daugavet point in view of Proposition \ref{prop:denting-no-ccs-1-sums}.
\end{proof}

\begin{remark}
The condition on $X$ containing a denting point cannot be dropped in general. Indeed, consider a measure space $\Omega$ consisting of two distinct atoms of measure $1$ and $X$ having Daugavet property. Then $L_1(\mu, X)=X\oplus_1 X$ has Daugavet property (see \cite[Theorem 1]{Wojtaszczyk92}), so it contains ccs-Daugavet points, but $\mu$ is purely atomic, and so, $L_1(\mu)$ contains no ccs-Daugavet points (see \cite[Proposition 4.12]{MPZpre}).
\end{remark}



\begin{remark}
Let $1<p<\infty$, let $(\Omega, \Sigma, \mu)$ be any measure space, and let $X$ be a Banach space. Note that, except for the trivial case where $\Omega$ consists of a single atom, you can always split $L_p(\mu, X)=L_p(\mu|_{\Omega_1}, X)\oplus_p L_p(\mu|_{\Omega_2}, X)$ for some nonempty disjoint subsets $\Omega_1,\Omega_2\subset \Omega$ where $\Omega_1 \cup \Omega_2 = \Omega$. Thus, these spaces cannot have $\nabla$, Daugavet, super-Daugavet, or ccs-Daugavet points (see \cite[Theorem 3.2]{HLPV} and \cite[Remark 4.5 and Proposition 4.6]{AHLP}). However, $\Delta$-type points on these vector-valued spaces differ from the scalar-valued ones because $L_p(\mu)$ cannot have $\Delta$ points due to uniform convexity. We distinguish two cases:
\begin{enumerate}
\item If $\mu$ contains an atom, then we can split $L_p(\mu, X)=X\oplus_p Y$ for some Banach space $Y$, and now if $X$ has $\Delta$, super-$\Delta$, or ccs-$\Delta$ points, then so does $L_p(\mu, X)$ by the stability lemmas.
\item If $\mu$ is atomless, if $X$ is LUR, then so is $L_p(\mu, X)$ \cite[Thoerem 2]{ST80} (also see the discussion in \cite[pp. 666]{KT89} for general $\sigma$-finite measure spaces). In this case, the space cannot contain $\Delta$ points. However, if $X$ has the DLD2P, then so does $L_p(\mu, X)$ \cite[Corollary 4.2]{LP21}. Hence, every point on $S_{L_p(\mu, X)}$ is $\Delta$.
\end{enumerate}
\end{remark}


\thebibliography{99}
\bibitem{AALMPPV24pre} T.~A.~Abrahamsen, R.~J.~Aliaga, V.~Lima, A.~Martiny, Y.~Perreau, A.~Prochazka, and T.~Veeorg, \textit{Delta-points and their implications for the geometry of Banach spaces}, J. Lond. Math. Soc. (2) \textbf{109} (2024), no. 5, Paper no. e12913, 38pp. 

\bibitem{ABHLP20} T.~A.~Abrahamsen, J.~Becerra~Guerrero, R.~Haller, V.~Lima, and M~P\~{o}ldvere, \textit{Banach spaces where convex combinations of relatively weakly open subsets of the unit ball are relatively weakly open}, Studia Math. \textbf{250} (2020), no. 3, 297--320.

\bibitem{AHLP}
T.~A.~Abrahamsen, R.~Haller, V.~Lima, and K.~Pirk, \textit{Delta- and Daugavet Points in Banach Spaces}, Proc. Edin. Math. Soc. (2) \textbf{63} (2020), no. 2, 475--496. 

\bibitem{ALMP22} T.~A.~Abrahamsen, V.~Lima, A.~Martiny, and Y.~Perreau, \textit{Asymptotic geometry and Delta-points}, Banach J. Math. Anal. \textbf{16} (2022), no. 4, Paper no. 57, 33pp. 

\bibitem{BLR18} J.~Becerra Guerrero, G.~L\'opez~P\'erez, and A. Rueda Zoca, \textit{Diametral diameter two properties in Banach spaces}, J. Conv. Anal. \textbf{25} (2018), no. 3, 817--840.

\bibitem{BW99}
P.~Beneker and J.~Wiegerinck, \textit{Strongly exposed points in uniform algebras}, Proc. Amer. Math. Soc. \textbf{127} (1999), no. 5, 1567--1570.

\bibitem{CGK13} B.~Cascales, A.~J.~Guirao, and V.~Kadets, \textit{A Bishop-Phelps-Bollob\'as type theorem for uniform algebras}, Adv. Math. \textbf{240} (2013), 370--382.



\bibitem{CGMM08} Y.~S.~Choi, D.~Garc\'{\i}a, M.~Maestre, and M.~Mart\'{\i}n, \textit{Polynomial numerical index for some complex vector-valued function spaces}, Q. J. Math. \textbf{59} (2008), no. 4, 455--474.

\bibitem{CILMPQQRR24} C.~Cobollo, D.~Isert, G.~L\'opez-P\'erez, M.~Mart\'{\i}n, Y.~Perreau, A.~Quero, A.~Quilis, D.~L.~Rodr\'{\i}guez-Vidanes, and A.~Rueda~Zoca, \textit{Banach spaces with small weakly open subsets of the unit ball and massive sets of Daugavet and $\Delta$-points},
Rev. R. Acad. Cienc. Exactas Fís. Nat. Ser. A Mat. RACSAM \textbf{118} (2024), no.3, Paper No. 96, 17pp.

\bibitem{Dales}
H.~G.~Dales, \textit{Banach Algebras and Automatic Continuity}, Oxford University Press, New York, 2000.

\bibitem{Daugavet63} I.~K.~Daugavet, \textit{A property of completely continuous operators in the space $C$}, Uspehi Mat. Nauk \textbf{18} (1963), no. 5, 157--158.

\bibitem{Diest}
J. Diestel, \textit{Sequences and Series in Banach spaces}, Springer-Verlag, New York, 1984.


\bibitem{GGMS87} N.~Ghoussoub, G.~Godefroy, B.~Maurey, and W.~Schachermayer, \textit{Some topological and geometrical structures in Banach spaces}, Mem. Amer. Math. Soc. \textbf{70} (1987), no. 378, iv+116 p.

\bibitem{JCpre} M.~Jung and G.~Choi, \textit{The Daugavet and Delta-constants of points in Banach spaces}, to appear in Proc. Roy. Soc. Edinburgh Sect. A. Preprint available in https://arxiv.org/abs/2307.10647.

\bibitem{JR22} M.~Jung and A.~Rueda~Zoca, \textit{Daugavet points and $\Delta$-points in Lipschitz-free spaces}, Studia Math. \textbf{265} (2022), 37--55.

\bibitem{HLPV}
R. Haller, J. Langemets, Y. Perreau, and T. Veeorg, \textit{Unconditional bases and Daugavet renormings}, J. Funct. Anal. \textbf{286} (2024), no. 12, Paper no. 110421, 31pp.

\bibitem{HPV21} R.~Haller, K.~Pirk, and T.~Veeorg, \textit{Daugavet- and delta-points in absolute sums of Banach spaces}, J. Convex Anal. \textbf{28} (2021), no. 1, 41--54.

\bibitem{KSSW00} V.~M.~Kadets, R.~V.~Shvidkoy, G.~G.~Sirotkin, and D.~Werner, \textit{Banach spaces with the Daugavet property}, Trans. Amer. Math. Soc. \textbf{352} (2000), no. 2, 855--873.

\bibitem{KLT24}
A.~Kami\'nska, H.~J.~Lee, and H.~Tag, \textit{Daugavet and Diameter Two Properties in Orlicz-Lorentz Spaces}, J. Math. Anal. Appl. \textbf{529} (2024), no. 2, Paper No. 127289, 22 pp.

\bibitem{KT89}
A.~Kami\'nska and B.~Turett, \textit{Rotundity in K\"{o}the spaces of vector-valued functions}, Canad. J. Math, \textbf{41}~(1989), no. 4, 659--675.

\bibitem{LP21}
J.~Langemets and K.~Pirk, \textit{Stability of diametral diameter two properties}, Rev. R. Acad. Cienc. Exactas Fís. Nat. Ser. A Mat. RACSAM
\textbf{115} (2021), no. 2, Paper No. 96, 13 pp.

\bibitem{LT22} 
H.~J.~Lee and H.~Tag, \textit{Diameter Two Properties in Some Vector-Valued Function Spaces}, Rev. R. Acad. Cienc. Exactas Fís. Nat. Ser. A Mat. RACSAM
\textbf{116} (2022), no. 1, Paper No. 17, 19pp.

\bibitem{LT24} 
H.~J.~Lee and H.~Tag, \textit{Remark on the Daugavet property for complex Banach spaces}, Demonstr. Math. \textbf{57} (2024), no. 1, Paper No. 20240004.

\bibitem{Leibowitz}
G.~M.~Leibowitz, \textit{Lectures on Complex Function Algebras}, Scott, Foresman and Company, 1970.

\bibitem{MPZpre} 
M.~Mart\'{\i}n, Y.~Perreau, and A.~Rueda~Zoca, \textit{Diametral Notions for Elements of the Unit Ball of a Banach Space}, Dissertationes Math. \textbf{594} (2024), 61 pp.


\bibitem{MV03} M.~Mart\'{\i}n and A.~R.~Villena, \textit{Numerical index and the Daugavet property for $L_\infty(\mu, X)$}, Proc. Edinb. Math. Soc. (2) \textbf{46} (2003), no. 2, 415--420.

\bibitem{Rudin91} W.~Rudin, \textit{Functional Analysis}, 2nd ed., McGraw-Hill Inc., 1991. 

\bibitem{Shvydkoy00} R.~V.~Shvydkoy, \textit{Geometric aspects of the Daugavet property}, J. Funct. Anal. \textbf{176} (2000), no. 2, 198--212.

\bibitem{ST80}
M.~A.~Smith and B.~Turett, \textit{Rotundity in Lebesgue-Bochner function space}, Trans. Amer. Math. Soc., \textbf{257} (1980), no. 1, 105--118.

\bibitem{Werner01} D.~Werner, \textit{Recent progress on the Daugavet property}, Irish Math. Soc. Bull. (2001), no. 46, 77--97.

\bibitem{Wojtaszczyk92} P.~Wojtaszczyk, \textit{Some remarks on the Daugavet equation}, Proc. Amer. Math. Soc. \textbf{115} (1992), no. 4, 1047--1052.
\end{document}